\documentclass[12pt]{amsart}
\usepackage{graphicx}
\usepackage{amscd}
\usepackage{amsmath}
\usepackage{amssymb}
\usepackage{amsfonts}
\usepackage[margin=1.2in]{geometry}

\newtheorem{theorem}{Theorem}[section]
\theoremstyle{plain}
\newtheorem{corollary}{Corollary}[section]

\newtheorem{proposition}{Proposition}[section]

\theoremstyle{definition}
\newtheorem{example}{Example}[section]
\newtheorem{definition}{Definition}[section]

\numberwithin{equation}{section}

\begin{document}
\title[Class estimates for vector-valued distributions]{Tauberian class estimates for vector-valued distributions}
\author[S. Pilipovi\'{c}]{Stevan Pilipovi\'{c}}
\address{Department of Mathematics and Informatics\\ University of Novi Sad\\ Trg Dositeja Obradovi\'ca 4\\ 21000 Novi Sad \\ Serbia }
\email{pilipovic@dmi.uns.ac.rs}

\thanks{S. Pilipovi\'{c} gratefully acknowledges support by Ministry of Edu. Sci. Tech. Dev. grant 174024.}

\author[J. Vindas]{Jasson Vindas}
\address{Department of Mathematics: Analysis, Logic and Discrete Mathematics, Ghent University, Krijgslaan 281, 9000 Gent, Belgium}
\email{jasson.vindas@UGent.be}

\thanks{The work of J. Vindas was supported by Ghent University, through the BOF-grant 01N01014.}

\subjclass[2010]{40E05, 46F05, 46F12}

 \keywords{Regularizing transforms; class estimates; Tauberian theorems, vector-valued distributions; generalized functions; wavelet transform}

\begin{abstract}
We study Tauberian properties of regularizing transforms of vector-valued tempered distributions, that
is, transforms of the form $M^{\mathbf{f}}_{\varphi}(x,y)=(\mathbf{f}\ast\varphi_{y})(x)$, where the kernel $\varphi$ is a test
function and $\varphi_{y}(\cdot)=y^{-n}\varphi(\cdot/y)$. We investigate conditions which ensure that a distribution that a priori takes values in locally convex space actually takes values in a narrower Banach space. Our goal is to characterize spaces of Banach space valued tempered distributions in terms of so-called \emph{class estimates} for the transform $M^{\mathbf{f}}_{\varphi}(x,y)$. Our results generalize and improve earlier Tauberian theorems of Drozhzhinov and Zav'yalov (Sb. Math. 194 (2003), 1599--1646). Special attention is paid to find the \emph{optimal} class of kernels $\varphi$ for which these Tauberian results hold. 

\end{abstract}

\maketitle

\begin{center}

\emph{Dedicated to the memory of Vasili\u{\i} Sergeevich Vladimirov\\ and Boris  Ivanovich Zav'yalov}

\end{center}

\bigskip

\section{Introduction}
\label{wnwi}

Tauberian theorems are quite useful tools in several fields of mathematics such as number theory, operator theory, differential equations, probability theory, and mathematical physics. See Korevaar's book \cite{korevaarbook} for an account on the one dimensional theory. In the case of multidimensional Tauberian theorems, the subject has been deeply influenced by the extensive work of Drozhzhinov, Vladimirov, and Zav'yalov. Their approach led to the incorporation of generalized functions in the area and resulted in a powerful Tauberian machinery for multidimensional Laplace transforms of distributions. We refer to the monographs \cite{P-S-V,vladimirov-d-z1} and the recent survey article \cite{Drozhzhinov2016} for overviews on Tauberian theorems for generalized functions and their applications. Interestingly, distributional methods have been crucial for recent developments on complex Tauberian theorems \cite{C,D-V2016,D-V_OPT,D-V_CT,korevaar2005}.

The goal of this article is to generalize and improve various of the results of Drozhzhinov and Zav'yalov from \cite{drozhzhinov-z2,drozhzhinov-z3}. The present work might be regarded as a continuation of our previous paper \cite{Pil-Vin2014}, where we treated multidimensional Tauberian theorems for quasiasymptotics of vector-valued distributions (see also \cite{vindas-pilipovic-rakic}). We shall deal here with characterizations of Banach space valued distributions in terms of so-called Tauberian \emph{class estimates}. 

The central problem that we shall be considering can be formulated as follows. Fix a test function $\varphi\in\mathcal{S}(\mathbb{R}^{n})$. To vector-valued tempered distribution $\mathbf{f}$, we associate the \emph{regularizing transform}
$$
M^{\mathbf{f}}_{\varphi}(x,y)= (\mathbf{f}\ast\varphi_{y})(x), \qquad (x,y)\in\mathbb{R}^{n}\times \mathbb{R}_{+},$$
where $\varphi_{y}(t)=y^{-n}\varphi(t/y)$. We are interested in conditions in terms of $M^{\mathbf{f}}_{\varphi}$ that ensure that $\mathbf{f}$ a priori taking values in a ``broad'' (Hausdorff) locally convex space $X$ actually takes values in a narrower Banach space $E$, which is assumed to be continuously included into $X$. If it is a priori known that $\mathbf{f}$ takes values in $E$, then one can directly verify that it satisfies the norm estimate
\begin{equation}
\label{wnwieq4} 
\left\|M^{\mathbf{f}}_{\varphi}(x,y)\right\|_{E}\leq
C \frac{(1+y)^{k}\left(1+\left|x\right|\right)^{l}}{y^{k}},
\end{equation}
for some $k$, $l$, and $C$. As in \cite{drozhzhinov-z2,drozhzhinov-z3}, we call (\ref{wnwieq4}) a class estimate. The problem of interest is thus the converse one: Up to what extend does a class estimate (\ref{wnwieq4}) allow one to conclude that $\mathbf{f}$ takes values in the Banach space $E$? 

This Tauberian question was raised and studied by Drozhzhinov and Zav'yalov in one dimension in \cite{drozhzhinov-z2} and in several variables in \cite{drozhzhinov-z3}, where they demonstrated that many Tauberian theorems for integral transforms become particular instances of this problem. See also \cite{drozhzhinov-z5} for several other interesting applications. In this article we revisit the problem and provide optimal results by finding the largest class of test functions for which the space of $E$-valued distributions (up to some correction terms) admits a characterization in terms of a class estimate.

We will essentially prove here that the sought optimal Tauberian kernels are given by the class of non-degenerate test functions that we introduced in \cite{Pil-Vin2014} and turns out to be much larger than the one employed in \cite{drozhzhinov-z2,drozhzhinov-z3} (the latter called in this article the class of strongly non-degenerate test functions, see  Definition \ref{wnwced2}). Note that in Wiener Tauberian theory \cite{korevaarbook} the Tauberian kernels are those whose Fourier transforms do not vanish at any point. In our theory the Tauberian kernels will be those $\varphi$ such that $\hat{\varphi}$ does not identically vanish on any ray through the origin. Besides yielding more general results, we believe that our new approach, based on ideas from wavelet analysis, is much simpler than that from \cite{drozhzhinov-z2,drozhzhinov-z3}, as it totally avoids using ``corona theorem'' type arguments and the structure of the Taylor polynomials of the Fourier transforms $\hat{\varphi}$ of the kernels.

 The article is organized as follows. Section \ref{prely wnwn} fixes the notation and collects some necessary background material on wavelet analysis of Banach space valued (Lizorkin) distributions. Section \ref{wnwtd} is also of preparatory character, we discuss there some basic properties and examples of regularizing transforms. The main body of the article is Sections \ref{wnwce}--\ref{section class estimates with representation}. Section \ref{wnwce} deals with characterizations of $\mathcal{S}'(\mathbb{R}^{n},E)$ in terms of global and local class estimates, that is, when \eqref{wnwieq4} holds for all $(x,y)$ or when it is just assumed for $(x,y)\in\mathbb{R}^{n}\times (0,1]$. We shall actually work there with more general integral assumptions on $M^{\mathbf{f}}_{\varphi}$. We analyse in Section \ref{wnwce} the case of strongly non-degenerate test functions as kernels of the regularizing transform. Finally,  we study in Section \ref{section class estimates with representation} Tauberian class estimate characterizations for distributions intertwining a representation of $\mathbb{R}^{n}$ with the translation group; the underlying conditions here are given in terms of  transforms with respect to (generalized) Littlewood-Paley pairs.

\section{Preliminaries}
\label{prely wnwn}
We use the notation $\mathbb{H}^{n+1} = \mathbb{R}^{n} \times
\mathbb{R_+}$ for the upper-half space. Locally convex spaces are always assumed to be Hausdorff. The space $E$ always denotes a fixed, but arbitrary, Banach space with norm $\left\|\:\cdot\:\right\|$. Measurability for $E$-valued functions is
meant in the sense of Bochner (i.e., a.e. pointwise limits of
$E$-valued continuous functions); likewise, integrals for
$E$-valued functions are taken in the Bochner sense. For test functions we set $\check{\varphi}(t)=\varphi(-t)$ and $\varphi_{y}(t)=y^{-n}\varphi( t/y)$.

\subsection{Spaces of test functions} \label{wnw sub spaces}We use the standard notation from distribution theory, as explained e.g. in \cite{P-S-V, vladimirovbook,vladimirov-d-z1}. In particular, the Schwartz spaces of smooth compactly supported and rapidly decreasing test functions are denoted by $\mathcal{D}(\mathbb{R}^{n})$ and $\mathcal{S}(\mathbb{R}^{n})$.   We choose the constants in the Fourier transform as
$$\hat{\varphi}(u)=\int_{\mathbb{R}^{n}}\varphi(t)e^{-iu \cdot t}\mathrm{d}t.$$

 Following
\cite{holschneider}, the space
$\mathcal{S}_0(\mathbb{R}^{n})$ of highly time-frequency localized
functions over $\mathbb{R}^{n}$ is defined as the closed subspace of $\mathcal{S}(\mathbb{R}^{n})$ consisting of  those elements for which all their moments vanish,
i.e., $$\eta\in \mathcal{S}_0(\mathbb{R}^{n})\; \mbox{ if and only
if }\;  \int_{\mathbb{R}^{n}}t^m\eta(t)\mathrm{d}t=0, \;\mbox{ for
all }\; m\in\mathbb{N}^{n}.$$ This
space is also known as the Lizorkin space of test functions. The corresponding space of highly
localized function over $\mathbb H^{n+1}$ is denoted by
$\mathcal{S}(\mathbb {H}^{n+1})$. It consists of those $\Phi\in
C^{\infty}(\mathbb{H}^{n+1})$ for which
$$\sup_{(x,y)\in
\mathbb {H}^{n+1}}\,\left(y+\frac
{1}{y}\right)^{k_{1}}\left(1+\left|x\right|\right)^{k_{2}}\,\left|\frac{\partial^{l}}{\partial y^{l}}\frac{\partial^{m}}{\partial x^{m}}\Phi (x,y)\right|<\infty,$$
for all $k_{1},k_{2},l\in \mathbb{N}$ and $m\in\mathbb{N}^{n}$.
The canonical topology of this space is defined in the standard way \cite{holschneider}.

We shall also employ an interesting class of subspaces of $\mathcal{S}(\mathbb{R}^{n})$ introduced by Drozhzhinov and Zav'yalov in \cite{drozhzhinov-z3}. Let $I$ be an ideal of the ring $\mathbb{C}[t_1,t_{2},\dots,t_{n}]$, the (scalar-valued) polynomials over $\mathbb{C}$ in $n$ variables. Define $\mathcal{S}_{I}(\mathbb{R}^{n})$ as the subspace of $\mathcal{S}(\mathbb{R}^{n})$ consisting of those $\phi$ such that all Taylor polynomials at the origin of its Fourier transform $\hat{\phi}$ belong to the ideal $I$. For example, we have $\mathcal{S}_{I}(\mathbb{R}^{n})=\mathcal{S}_{0}(\mathbb{R}^{n})$ if $I=\left\{0\right\}$ or $\mathcal{S}_{I}(\mathbb{R}^{n})=\mathcal{S}(\mathbb{R}^{n})$ if $I=\mathbb{C}[t_1,t_{2},\dots,t_{n}]$. 

Let $P_{0},\dots,P_{q},\dots$ be a system of homogeneous polynomials where each $P_{q}$ has degree $q$ (some of them may be identically 0). Consider the ideal $I=[P_{0},P_{1},\dots, P_{q},\dots]$, namely, the ideal generated by the $P_{q}$;  then, one can show \cite{drozhzhinov-z3}  that $\mathcal{S}_{I}(\mathbb{R}^{n})$ is a closed subspace of $\mathcal{S}(\mathbb{R}^{n})$ and actually $\phi\in\mathcal{S}_{I}(\mathbb{R}^{n})$ if and only if 
$$\int_{\mathbb{R}^{n}} Q(t)\phi(t)\mathrm{d}t =0$$
for all polynomial $Q$ that satisfies the differential equations 
$$P_{q}(\partial/\partial t)Q=0,\qquad q=0,1\dots .$$ 
When there is $d\in\mathbb{N}$ such that $P_{q}=0$ for $q>d$ one can relax the previous requirement \cite[Lem. A.5]{drozhzhinov-z3} by just asking it to hold for polynomials $Q$ with degree at most $d$. 

We denote by $\mathbb{P}_{d}$ the ideal of (scalar-valued) polynomials of the form $Q(t)=\sum_{d\leq\left|m\right|\leq N}a_{m}t^{m}$, for some $N\in\mathbb{N}$.

Let $\varphi\in\mathcal{S}(\mathbb{R}^{n})$. We write $P^{\varphi}_{q}$ for the $q$-homogeneous term of the Taylor polynomial of its Fourier transform at 0, that is,
\begin{equation}
\label{taylorhomoeq} 
P^{\varphi}_{q}(u)=\sum_{|m|=q} \frac{\hat{\varphi}^{(m)}(0)}{m!} u^{m} , \qquad q=0,1,2,\dots.
\end{equation} 
The ideal generated by these homogeneous polynomials is denoted as 
\begin{equation}
\label{classestIdealeq}
I_{\varphi}=[P_{0}^{\varphi},P_{1}^{\varphi},\dots, P_{q}^{\varphi},\dots].
\end{equation}

\subsection{Spaces of vector-valued distributions} 
\label{wnwdsp} 
Let $\mathcal{A}(\Omega)$ be a topological vector space of test function over
 an open subset $\Omega\subseteq\mathbb{R}^{n}$ and let $X$ be a locally convex spaces.
 We denote by $\mathcal{A}'(\Omega,X)=L_{b}(\mathcal{A}(\Omega),X)$, the space of continuous
 linear mappings from $\mathcal{A}(\Omega)$ to $X$ with the topology of uniform convergence over
 bounded subsets of $\mathcal{A}(\Omega)$ \cite{treves}. We are mainly concerned with
 the spaces $\mathcal{D}'(\mathbb{R}^{n},X)$, $\mathcal{S}'(\mathbb{R}^{n},X)$, $\mathcal{S}'(\mathbb{H}^{n+1},X)$, and $\mathcal{S}'_{I}(\mathbb{R}^{n},X)$; see \cite{schwartzv,silva} for vector-valued
 distributions. 
 
Observe that we have a well defined continuous linear projector
from $\mathcal{S}'(\mathbb{R}^{n},X)$ onto $\mathcal{S}'_I(\mathbb{R}^{n},X)$ as
the restriction of $X$-valued tempered distributions to
$\mathcal{S}_I(\mathbb{R}^{n})$. We
do not want to introduce a notation for this map, so if
$\mathbf{f}\in\mathcal{S}'(\mathbb{R}^{n},X)$, we will keep calling by $\mathbf{f}$ its projection onto $\mathcal{S}'_I(\mathbb{R}^{n},X)$. 

\subsection{Wavelet analysis on $\mathcal{S}'_{0}(\mathbb{R}^{n},E)$} 
\label{waE} We review in this section some properties of the wavelet transform of (Lizorkin) distributions with values in the Banach space $E$. By a wavelet we simply mean an element of $\mathcal{S}_{0}(\mathbb{R}^{n})$. The wavelet transform of  $\mathbf{f}\in \mathcal{S}'_{0}(\mathbb{R}^{n},E)$ with respect to the wavelet $\psi\in\mathcal{S}_{0}(\mathbb{R}^{n})$ is defined as
$$
\mathcal{W}_{\psi}\mathbf{f}(x,y)=\langle \mathbf{f}(x+yt), \bar{\psi}(t) \rangle \in E, \qquad (x,y)\in \mathbb{H}^{n+1}.
$$
It is then clear that $\mathcal{W}_{\psi}\mathbf{f} \in C^{\infty}(\mathbb{H}^{n+1}, E)$. 

Notice \cite{holschneider,P-R-V} that $\mathcal{W}_{\psi}:\mathcal{S}_{0}(\mathbb{R}^{n})\mapsto\mathcal{S}(\mathbb{H}^{n+1})$ is a continuous linear map. We are interested in those wavelets for which $\mathcal{W}_{\psi}$ admits a left inverse. For wavelet-based reconstruction, one employs the so-called wavelet
synthesis operator \cite{holschneider}. Given $\Phi\in\mathcal{S}(\mathbb{H}^{n+1})$, we define the \textit{wavelet
synthesis operator} with respect to the wavelet $\psi$ as
\begin{equation}
\label{wnwneq6}
\mathcal{M}_{\psi}\Phi(t)=\int^{\infty}_{0}\int_{\mathbb{R}^{n}}\Phi(x,y)\frac{1}{y^{n}}\psi\left(\frac{t-x}{y}\right)\frac{\mathrm{d}x\mathrm{d}y}{y}\: ,
\ \ \ t \in \mathbb{R}^{n}.
\end{equation}
One can show that $\mathcal{M}_{\psi}:\mathcal{S}(\mathbb{H}^{n+1})\rightarrow
\mathcal{S}_0(\mathbb{R}^{n})$ is continuous \cite{holschneider,vindas-pilipovic-rakic}.

We shall say that the wavelet $\psi\in\mathcal{S}_{0}(\mathbb{R}^{n})$ admits a \textit{reconstruction wavelet} if there exists $\eta\in\mathcal{S}_{0}(\mathbb{R}^{n})$ such that
\begin{equation}
\label{wnwneq7}
c_{\psi,\eta}(\omega)=\int^{\infty}_{0}\overline{\hat{\psi}}(r\omega)\hat{\eta}(r\omega)\frac{\mathrm{d}r}{r}\neq 0\: , \ \ \ \omega\in\mathbb{S}^{n-1} ,
\end{equation}
is independent of the direction $\omega$; in such a case we set $c_{\psi,\eta}:=c_{\psi,\eta}(\omega).$ If $\psi$ admits the reconstruction wavelet $\eta$, one has the reconstruction
formula \cite{holschneider} for the wavelet transform on $\mathcal{S}_{0}(\mathbb{R}^{n})$
\begin{equation}
\label{wnwneq8}
\mathrm{Id}_{\mathcal{S}_0(\mathbb{R}^{n})}=\frac{1}{c_{\psi,\eta}}\mathcal{M}_{\eta}\mathcal{W}_{\psi}.
\end{equation}

We have characterized in \cite[Prop. 5.1]{Pil-Vin2014} those wavelets which have a reconstruction wavelet. They are in fact those elements of $\mathcal{S}_{0}(\mathbb{R}^{n})$ that are non-degenerate in the sense of the ensuing definition.

\begin{definition}
\label{wnwd1}
We say that the test function $\varphi\in\mathcal{S}(\mathbb{R}^{n})$ is non-degenerate if its Fourier transform does not identically vanish on any ray through the origin.
\end{definition}

The wavelet synthesis operator (\ref{wnwneq6}) can be extended to $\mathcal{S}'_{0}(\mathbb{H}^{n+1},E)$ as follows. Let $\mathbf{K}\in\mathcal{S}'_{0}(\mathbb{H}^{n+1},E)$. We define $\mathcal{M}_{\psi}:\mathcal{S}'_{0}(\mathbb{H}^{n+1},E)\mapsto\mathcal{S}'_{0}(\mathbb{R}^{n},E)$, a continuous linear map, as
\begin{equation}
\label{wnwneq10}
\left\langle
\mathcal{M}_{\psi}\mathbf{K}(t),\rho(t)\right\rangle=\left\langle
\mathbf{K}(x,y),\mathcal{W}_{\bar{\psi}}\rho(x,y)\right\rangle, \ \ \ \rho\in\mathcal{S}_{0}(\mathbb{R}^{n}).
\end{equation}
We mention that our convention to identify a function $\mathbf{F}$ of slow growth on $\mathbb{H}^{n+1}$ with an element of $\mathcal{S}'_{0}(\mathbb{R}^{n},E)$, that is, one that satisfies a growth condition 
$$
\int_{0}^{\infty}\int_{\mathbb{R}^{n}} \left(\frac{1}{y}+{y}\right)^{-k}(1+|x|)^{-l} \|\mathbf{F}(x,y)\| < \infty,$$
is via the Bochner integral
$$
\left\langle
\mathbf{F}(x,y), \Phi(x,y)\right\rangle =\int_{0}^{\infty}\int_{\mathbb{R}^{n}} \Phi(x,y)\mathbf{F}(x,y) \frac{\mathrm{d}x\mathrm{d}y}{y}, \qquad \Phi\in \mathcal{S}(\mathbb{H}^{n+1}).
$$
If $\psi\in\mathcal{S}_{0}(\mathbb{R}^{n})$ is non-degenerate and $\eta\in\mathcal{S}_{0}(\mathbb{R}^{n})$ is a reconstruction wavelet for it, then \cite[Prop. 5.3]{Pil-Vin2014} the reconstruction formula
\begin{equation}
\label{wnwneq11}
\mathrm{Id}_{\mathcal{S}'_0(\mathbb{R}^{n},E)}=\frac{1}{c_{\psi,\eta}}\mathcal{M}_{\eta}\mathcal{W}_{\psi} 
\end{equation}
holds; 
furthermore, we have the desingularization formula,
\begin{equation}
\label{wnwneq12}
\left\langle
\mathbf{f}(t),\rho(t)\right\rangle=\frac{1}{c_{\psi,\eta}}\int^{\infty}_{0}\int_{\mathbb{R}^{n}}\mathcal{W}_{\psi}\mathbf{f}(x,y)\mathcal{W}_{\bar{\eta}}\rho(x,y)\frac{\mathrm{d}x\mathrm{d}y}{y}\: ,
\end{equation}
valid for all $\mathbf{f}\in\mathcal{S}'_{0}(\mathbb{R}^{n},E)$ and $\rho\in\mathcal{S}_{0}(\mathbb{R}^{n})$.

\section{Regularizing transform of $E$-valued distributions}
\label{wnwtd}
We discuss in this section some basic properties of the regularizing transform of $E$-valued tempered distributions with respect to a test function $\varphi\in\mathcal{S}(\mathbb{R}^{n})$, which, as in the introduction, we define as the $E$-valued $C^{\infty}$-function
\begin{equation}
\label{wnwneq1}
M^{\mathbf{f}}_{\varphi}(x,y):=(\mathbf{f}\ast\varphi_{y})(x), \ \ \ (x,y)\in\mathbb{H}^{n+1},
\end{equation}
where $\mathbf{f}\in\mathcal{S}'(\mathbb{R}^{n},E)$. If $\psi\in\mathcal{S}_{0}(\mathbb{R}^{n})$, we obviously have 
$$
\mathcal{W}_{\psi}\mathbf{f}=M^{\mathbf{f}}_{\check{\bar{\psi}}}.
$$ Naturally, the transform  
\eqref{wnwneq1} perfectly makes sense for distributions with values in any arbitrary locally convex space. 

The study of the (Tauberian) ``converse'' of the bound on the regularizing transform of an $E$-valued distribution delivered by the next (Abelian) proposition is the core of this article. 

\begin{proposition}
\label{wnwp2} Let $\mathbf{f}\in\mathcal{S}'(\mathbb{R}^{n},E)$
and let $\varphi\in\mathcal{S}(\mathbb{R}^{n})$. Then,
$M^{\mathbf{f}}_{\varphi}\in C^{\infty}(\mathbb{H}^{n+1},E)$ is a
function of slow growth on $\mathbb{H}^{n+1}$. In addition, the
linear map  $\mathbf{f}\in\mathcal{S}'(\mathbb{R}^{n},E)\mapsto
M^{\mathbf{f}}_{\varphi}\in\mathcal{S}'(\mathbb{H}^{n+1},E)$ is
continuous for the topologies of uniform convergence over bounded
sets. Furthermore, if
$\mathfrak{B}\subset\mathcal{S}'(\mathbb{R}^{n},E)$ is bounded for
the topology of pointwise convergence, then there exist $k,l$ and
$C>0$ such that
\begin{equation}
\label{wnwneq5}
\left\|M_{\varphi}^\mathbf{f}(x,y)\right\|\leq C \left( \frac{1}{y}+y\right)^{k}\left(1+\left|x\right|\right)^{l} , \qquad (x,y)\in \mathbb{H}^{n+1},
\end{equation}
for all $\mathbf{f}\in\mathfrak{B}.$
\end{proposition}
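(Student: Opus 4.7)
The argument reduces to standard Schwartz-space estimates combined with the uniform boundedness principle, with the assignment $(x,y)\mapsto \varphi_{y}(x-\cdot)$ playing the central role.

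\emph{Smoothness and the bound for a fixed $\mathbf{f}$.} First I would verify that $(x,y)\mapsto \varphi_{y}(x-\cdot)$ is a $C^{\infty}$ map from $\mathbb{H}^{n+1}$ into $\mathcal{S}(\mathbb{R}^{n})$, with partial derivatives computed by the obvious formulas. Composing with $\mathbf{f}\in\mathcal{S}'(\mathbb{R}^{n},E)$ then yields $M^{\mathbf{f}}_{\varphi}\in C^{\infty}(\mathbb{H}^{n+1},E)$. By continuity of $\mathbf{f}$, there exist $k,l\in\mathbb{N}$ and $C_{0}>0$ such that $\|\langle \mathbf{f},\phi\rangle\|_{E}\le C_{0}\,\rho_{k,l}(\phi)$ for every $\phi\in\mathcal{S}(\mathbb{R}^{n})$, where $\rho_{k,l}(\phi)=\sup_{t\in\mathbb{R}^{n},\,|\alpha|\le k}(1+|t|)^{l}|\partial^{\alpha}\phi(t)|$. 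Applying this to $\phi(t)=\varphi_{y}(x-t)$ and substituting $u=(x-t)/y$ together with the crude inequality $(1+|t|)^{l}\le (1+|x|)^{l}(1+y)^{l}(1+|u|)^{l}$ gives
$$
\rho_{k,l}(\varphi_{y}(x-\cdot))\le C_{1}\,(1+|x|)^{l}\,(1+y)^{l}\,y^{-n-k},
$$
where $C_{1}$ depends only on $\varphi$, $k$, $l$. Since $(1+y)^{l}y^{-n-k}\le 2^{l}(y+1/y)^{m}$ with $m=\max(n+k,l)$, this establishes \eqref{wnwneq5} and, in particular, the slow growth of $M^{\mathbf{f}}_{\varphi}$.

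\emph{Uniform bound over $\mathfrak{B}$ and continuity of $\mathbf{f}\mapsto M^{\mathbf{f}}_{\varphi}$.} If $\mathfrak{B}\subset\mathcal{S}'(\mathbb{R}^{n},E)$ is pointwise bounded, then, because $\mathcal{S}(\mathbb{R}^{n})$ is a Fr\'echet space and thus barrelled, the Banach--Steinhaus theorem furnishes a single pair $(k,l)$ and a single constant $C_{0}$ for which the seminorm bound of the previous paragraph is valid uniformly in $\mathbf{f}\in\mathfrak{B}$. Re-running the calculation produces the desired uniform estimate \eqref{wnwneq5}. For the continuity statement, one identifies $M^{\mathbf{f}}_{\varphi}\in\mathcal{S}'(\mathbb{H}^{n+1},E)$ via the Bochner integral indicated after \eqref{wnwneq10}; Fubini's theorem (reduced to the scalar case by composing with functionals in $E'$) gives the key identity
$$
\langle M^{\mathbf{f}}_{\varphi},\Phi\rangle=\langle \mathbf{f},\Psi_{\Phi}\rangle,\qquad \Psi_{\Phi}(t):=\int_{0}^{\infty}\!\!\int_{\mathbb{R}^{n}}\Phi(x,y)\,\varphi_{y}(x-t)\,\frac{\mathrm{d}x\,\mathrm{d}y}{y},
$$
for every $\Phi\in\mathcal{S}(\mathbb{H}^{n+1})$. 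A direct seminorm estimate shows that $\Phi\mapsto \Psi_{\Phi}$ is continuous from $\mathcal{S}(\mathbb{H}^{n+1})$ into $\mathcal{S}(\mathbb{R}^{n})$, hence sends bounded sets to bounded sets. Continuity of $\mathbf{f}\mapsto M^{\mathbf{f}}_{\varphi}$ for the topologies of uniform convergence on bounded sets follows at once.

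\emph{Main obstacle.} The genuinely routine estimates are the seminorm calculations on $\varphi_{y}(x-\cdot)$ and on $\Psi_{\Phi}$; everything else is formal. The only point that requires some care is the Fubini interchange in the vector-valued setting used to obtain the identity $\langle M^{\mathbf{f}}_{\varphi},\Phi\rangle=\langle \mathbf{f},\Psi_{\Phi}\rangle$. I would handle it by first establishing Bochner integrability of the integrand from the slow-growth bound and the rapid decay of $\Phi$, and then reducing to the scalar case by pairing with arbitrary $e^{\ast}\in E'$.
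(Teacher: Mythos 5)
Your argument for the smoothness of $M^{\mathbf{f}}_{\varphi}$ and for the estimate \eqref{wnwneq5} coincides in substance with the paper's: Banach--Steinhaus on the pointwise bounded set $\mathfrak{B}$ yields a uniform Schwartz seminorm bound, which is then applied to $\varphi_{y}(x-\cdot)$ via the change of variables $u=(x-t)/y$. (A minor bookkeeping slip: your intermediate bound $\rho_{k,l}(\varphi_{y}(x-\cdot))\le C_{1}(1+|x|)^{l}(1+y)^{l}y^{-n-k}$ is only valid as written when $l\ge k$, because $\sup_{|\alpha|\le k}y^{-n-|\alpha|}=y^{-n}$ for $y\ge 1$; this is trivially fixed and your final exponent $m=\max(n+k,l)$ is still fine.)

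Where you genuinely diverge from the paper is in the proof of continuity of $\mathbf{f}\mapsto M^{\mathbf{f}}_{\varphi}$. The paper remarks that $\mathcal{S}'(\mathbb{R}^{n},E)$ is bornological, so it only needs to check that bounded sets go to bounded sets, and then it does this \emph{directly}: having already established the uniform pointwise bound \eqref{wnwneq5} over $\mathfrak{B}$, it plugs that bound into the Bochner integral $\langle M^{\mathbf{f}}_{\varphi},\Phi\rangle=\int_{0}^{\infty}\int_{\mathbb{R}^{n}}M^{\mathbf{f}}_{\varphi}(x,y)\Phi(x,y)\,\mathrm{d}x\,\mathrm{d}y/y$ and uses the decay of $\Phi$ running over a bounded $\mathfrak{C}\subset\mathcal{S}(\mathbb{H}^{n+1})$. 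No Fubini interchange is needed there. You, by contrast, factor through the adjoint $\Phi\mapsto\Psi_{\Phi}$ (a synthesis-type operator $\mathcal{S}(\mathbb{H}^{n+1})\to\mathcal{S}(\mathbb{R}^{n})$) and reduce to the identity $\langle M^{\mathbf{f}}_{\varphi},\Phi\rangle=\langle\mathbf{f},\Psi_{\Phi}\rangle$. This is a valid and arguably conceptually cleaner route -- it gives continuity in the $L_{b}$-topologies without invoking bornologicity of $\mathcal{S}'(\mathbb{R}^{n},E)$ -- but it buys you two new obligations that the paper's argument avoids: (a) the Fubini/Pettis interchange needed for the adjoint identity (which the paper only carries out later, in the proof of Proposition \ref{wnwcep1}, and only for $\psi\in\mathcal{S}_{0}$), and (b) the continuity of $\Phi\mapsto\Psi_{\Phi}$ on all of $\mathcal{S}(\mathbb{R}^{n})$, which the paper cites from \cite{holschneider} only for kernels in $\mathcal{S}_{0}(\mathbb{R}^{n})$ and which you would have to verify independently for a general $\varphi\in\mathcal{S}(\mathbb{R}^{n})$. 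Both items are doable by routine estimates of the kind you indicate, so the proposal is correct, just somewhat heavier than the paper's direct estimate.
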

\begin{proof} The proof is simple but we include it for the sake of completeness.
The space $\mathcal{S}'(\mathbb{R}^{n},E)$ is bornological. Therefore, we
should show that this map takes bounded sets to bounded ones. Let
$\mathfrak{B}\subset\mathcal{S}'(\mathbb{R}^{n},E)$ be a bounded
set. The Banach-Steinhaus theorem implies the existence of
$k_{1}\in\mathbb{N}$ and $C_{1}>0$ such that for all $\rho\in \mathcal{S}(\mathbb{R}^{n})$ and $\mathbf{f}\in\mathfrak{B}$,

$$
\left\|\left\langle \mathbf{f},\rho\right\rangle\right\|\leq C_{1} \sup_{t\in\mathbb{R}^{n}, \left|m\right|\leq k_{1}} \left(1+\left|t\right|\right)^{k_1}\left|\rho^{(m)}(t)\right| .
$$
Consequently,
\begin{align*}
\left\|M^{\mathbf{f}}_{\varphi}(x,y)\right\| & 
\leq C_{1}\left(\frac{1}{y}+y\right)^{n+k_1} \sup_{u\in\mathbb{R}^{n}, \left|m\right|\leq k_1} \left(1+\left|x\right|+y\left|u\right|\right)^{k_1}\left|\varphi^{(m)}\left(u\right)\right|
\\
&
\leq
C\left(\frac{1}{y}+y\right)^{n+2k_{1}}(1+\left|x\right|)^{k_{1}}  , \ \ \mbox{ for all } \mathbf{f}\in\mathfrak{B},
\end{align*}
where $C=C_{1}\sup_{u\in\mathbb{R}^{n}, \left|m\right|\leq k_1}
\left(1+\left|u\right|\right)^{k_1}\left|\varphi^{(m)}\left(u\right)\right|.$
So, we obtain (\ref{wnwneq5}) with $k=n+2k_{1}$ and $l=k_{1}$. If
$\mathfrak{C}\subset\mathcal{S}(\mathbb{H}^{n+1})$ is a bounded
set of test functions, we have
\begin{align*}
\left\|\left\langle M^{\mathbf{f}}_{\varphi}(x,y),\Phi(x,y)\right\rangle\right\|&=\left\|\int^{\infty}_{0}\int_{\mathbb{R}^{n}}M^\mathbf{f}_{\varphi}(x,y)\Phi(x,y)\frac{\mathrm{d}x\mathrm{d}y}{y}\right\|
\\
&
\leq C \int^{\infty}_{0}\int_{\mathbb{R}^{n}}\left(\frac{1}{y}+y\right)^{k}(1+\left|x\right|)^{l}\left|\Phi(x,y)\right|\frac{\mathrm{d}x\mathrm{d}y}{y}\, ,
\end{align*}
which stays bounded as $\mathbf{f}\in\mathfrak{B}$ and $\Phi\in\mathfrak{C}$. Therefore, the set $$\left\{M^{\mathbf{f}}_{\varphi}: \; \mathbf{f}\in\mathfrak{B}\right\}\subset\mathcal{S}'(\mathbb{H}^{n+1},E)$$
is bounded; hence the map is continuous.
\end{proof}
We point out that the regularizing transforms enjoy excellent localization properties as shown by the following simple proposition. 

\begin{proposition} Let $\mathbf{f}\in\mathcal{S}'(\mathbb{R}^{n},E)$ and let $\varphi\in\mathcal{S}(\mathbb{R}^{n})$. Suppose that $K\subset \mathbb{R}^{n}\setminus \operatorname*{supp} \mathbf{f}$ is a compact set. Then, for any positive integer $k\in\mathbb{N}$ there exists $C=C_{k}$ such that
\begin{equation}
\label{wnweqLoc}
\sup_{x\in K}\left\|M^{\mathbf{f}}_{\varphi}(x,y)\right\|\leq C y^{k}, \  \ \ \mbox{for all }0<y<1.
\end{equation}
\end{proposition}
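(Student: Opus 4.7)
The plan is to localize $\mathbf{f}$ away from $K$ via a smooth cutoff, rewrite the regularizing transform as $\mathbf{f}$ applied to a concretely controlled Schwartz function, and then exploit the rapid decay of $\varphi$ together with the gap $\operatorname{dist}(K,\operatorname{supp}\mathbf{f})>0$.

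First I would set $\delta:=\operatorname{dist}(K,\operatorname{supp}\mathbf{f})>0$, which is positive since $K$ is compact and disjoint from the closed set $\operatorname{supp}\mathbf{f}$. Pick $\chi\in C^{\infty}(\mathbb{R}^{n})$ with all derivatives bounded, $\chi\equiv 0$ where $\operatorname{dist}(t,K)\leq \delta/2$, and $\chi\equiv 1$ where $\operatorname{dist}(t,K)\geq 3\delta/4$. Since $1-\chi$ is supported in a set disjoint from $\operatorname{supp}\mathbf{f}$, we have $\chi\mathbf{f}=\mathbf{f}$ in $\mathcal{S}'(\mathbb{R}^{n},E)$, hence
\[
M^{\mathbf{f}}_{\varphi}(x,y)=\langle \mathbf{f}(t),\,\rho_{x,y}(t)\rangle,\qquad \rho_{x,y}(t):=\chi(t)\,y^{-n}\varphi\!\left(\tfrac{x-t}{y}\right).
\]

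Next I would invoke the continuity of $\mathbf{f}:\mathcal{S}(\mathbb{R}^{n})\to E$ to produce $k_{1}\in\mathbb{N}$ and $C_{1}>0$ with $\|\langle \mathbf{f},\rho\rangle\|\leq C_{1}\sup_{t,\,|m|\leq k_{1}}(1+|t|)^{k_{1}}|\rho^{(m)}(t)|$, so it suffices to bound these seminorms of $\rho_{x,y}$ by $C\,y^{k}$ uniformly in $x\in K$ and $0<y<1$. Expanding by Leibniz puts derivatives on $\chi$ and on $\varphi((x-t)/y)$; crucially, every resulting term is supported in $\{|x-t|\geq \delta/2\}$, because this holds on $\operatorname{supp}\chi^{(\alpha)}$ for every $\alpha$ and every $x\in K$. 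On this set, for arbitrary $N$ we use $|\varphi^{(\beta)}((x-t)/y)|\leq C_{N,\beta}(1+|x-t|/y)^{-N}$ and split the decay as $N=N_{1}+N_{2}$: one factor $(1+|x-t|/y)^{-N_{1}}\leq (2y/\delta)^{N_{1}}$ furnishes the gain in $y$, while (using $y<1$) the other factor satisfies $(1+|x-t|/y)^{-N_{2}}\leq (1+|x-t|)^{-N_{2}}\leq (1+C_{K})^{N_{2}}(1+|t|)^{-N_{2}}$ with $C_{K}:=\sup_{x\in K}|x|$.

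Choosing $N_{2}=k_{1}$ absorbs the Schwartz weight $(1+|t|)^{k_{1}}$, and choosing $N_{1}=k+n+k_{1}$ overcomes the factor $y^{-n-|\beta|}$ produced by differentiating the dilate, yielding $(1+|t|)^{k_{1}}|\rho_{x,y}^{(m)}(t)|\leq C\,y^{k}$ uniformly in $x\in K$, $|m|\leq k_{1}$, and $0<y<1$; plugging this into the seminorm bound for $\mathbf{f}$ gives (\ref{wnweqLoc}). No serious obstacle is expected; the only delicate point is arranging the cutoff so that the full support of $\chi$ (and of all its derivatives) stays a uniform positive distance from $K$, and then splitting the rapid decay of $\varphi$ between a pure $y$-gain and the weight needed to dominate $(1+|t|)^{k_{1}}$.
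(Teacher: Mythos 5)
Your proof is correct, and it takes a genuinely different route from the paper's. The paper proves the estimate by packaging the restriction to $K$ into a single $C(K,E)$-valued tempered distribution $\mathbf{G}$, observing that $\mathbf{G}(\varepsilon t)\to 0$ in $\mathcal{D}'$ because $K$ avoids $\operatorname{supp}\mathbf{f}$, and then invoking a known structural result (that quasiasymptotic relations of this type transfer from $\mathcal{D}'$ to $\mathcal{S}'$, cf.\ \cite[Prop.\ 7.1]{Pil-Vin2014} or \cite[Lemma 6]{zavialov88}) so that evaluation at $\varphi$ is legitimate. Your argument instead localizes $\mathbf{f}$ with a cutoff $\chi$ vanishing near $K$, rewrites $M^{\mathbf{f}}_{\varphi}(x,y)=\langle\mathbf{f},\rho_{x,y}\rangle$ with $\rho_{x,y}(t)=\chi(t)\,\varphi_y(x-t)$, and then controls the Schwartz seminorms of $\rho_{x,y}$ directly by splitting the rapid decay of $\varphi$: one factor of $(1+|x-t|/y)^{-N_1}\leq (2y/\delta)^{N_1}$ produces the power of $y$ (overcoming the $y^{-n-|\beta|}$ from differentiating the dilate), and the other factor $(1+|x-t|/y)^{-N_2}\leq (1+|x-t|)^{-N_2}$ absorbs the weight $(1+|t|)^{k_1}$ uniformly for $x$ in the bounded set $K$. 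The bookkeeping all checks out: $1-\chi$ is indeed supported off $\operatorname{supp}\mathbf{f}$ because $\operatorname{dist}(K,\operatorname{supp}\mathbf{f})=\delta$, on $\operatorname{supp}\chi$ one has $|x-t|\geq\delta/2$ for $x\in K$, and the exponents $N_1=k+n+k_1$, $N_2=k_1$ yield the uniform bound $C y^{k}$. The trade-off is that the paper's proof is shorter but leans on nontrivial background on vector-valued quasiasymptotics, whereas yours is entirely self-contained and elementary, relying only on the Banach--Steinhaus characterization of continuity of $\mathbf{f}$ and the Schwartz decay of $\varphi$; it is arguably the more transparent proof of this particular localization estimate.
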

\begin{proof} Define the $C(K,E)$-valued tempered distribution whose evaluation at $\rho\in\mathcal{S}(\mathbb{R}_{t}^{n})$ is given by
$\left\langle \mathbf{G}(t),\rho(t)\right\rangle(\xi)=(\mathbf{f}\ast \rho)(\xi),$ $\xi\in K.$
Clearly, $\mathbf{G}\in\mathcal{S}'(\mathbb{R}_{t}^{n},C(K_{\xi},E))$. Then, since $K\subset \mathbb{R}^{n}\setminus \operatorname*{supp} \mathbf{f}$, we have that for each $\rho\in\mathcal{D}(\mathbb{R}^{n})$,
$
\left\langle \mathbf{G}(\varepsilon t),\rho (t)\right\rangle=0,
$
for sufficiently small $\varepsilon>0$. In particular, we obtain that, for a fixed $k\in\mathbb{N}$,
\begin{equation}
\label{wnweqpb1}
\mathbf{G}(\varepsilon t)=O(\varepsilon^{k}) \ \ \ \mbox{as }\varepsilon \to 0^{+}\ \mbox{ in }\mathcal{D}'(\mathbb{R}_{t}^{n},C(K,E)),
\end{equation}
where this relation is interpreted as quasiasymptotics in the sense of \cite{P-S-V, Pil-Vin2014}. (The precise meaning of \eqref{wnweqpb1} is  $\|\langle\mathbf{G}(\varepsilon t),\rho(t)\rangle\|_{C(K,E)}=O(\varepsilon^{k}) $ for each $\rho\in \mathcal{D}(\mathbb{R}^{n})$.)
Now, it is well know (cf. \cite[Prop. 7.1, p. 22]{Pil-Vin2014} or \cite[Lemma 6]{zavialov88}) that the quasiasymptotic relation (\ref{wnweqpb1}) remains valid in the space $\mathcal{S}'(\mathbb{R}_{t}^{n},C(K,E))$. This fact means that we have the right to evaluate the relation (\ref{wnweqpb1}) at $\varphi\in\mathcal{S}(\mathbb{R}^{n})$, which immediately yields
\begin{align*}
\sup_{x\in K} \left\|M^{\mathbf{f}}_{\varphi}(x,y)\right\|&= \left\|\left\langle \mathbf{G}(yt),\varphi(t)\right\rangle\right\|_{C(K,E)}
\leq Cy^{k},
\end{align*}
for some $C>0$, as claimed.
\end{proof}

In the next sections we will focus on attention to regularizing transform with respect to non-degenerate test functions in the sense of Definition \ref{wnwd1}. It is clear that if $\int_{\mathbb{R}^{n}}\varphi(t)\mathrm{d}t\neq 0$, then $\varphi$ is non-degenerate. We conclude this section by discussing two integral transforms that arise as regularizing transforms with respect to test functions satisfying the latter condition.

\begin{example}[\emph{The regularizing transform as solution to Cauchy problems}]
\label{wnwex3.4}
 When the test function
is of certain special form, the regularizing transform can become the solution to a PDE. We discuss a particular case
in this example. Let the set $\Gamma\subseteq\mathbb{R}^{n}$ be a closed convex cone with vertex at the origin.
In particular, we may have $\Gamma=\mathbb{R}^{n}$. Let $P$ be a homogeneous polynomial of degree $d$ such that $\Re e\:P(iu)<0 \ \ \  \mbox{ for all } \;u\in\Gamma\setminus\left\{0\right\}.$ 
We denote \cite{vladimirovbook,vladimirov-d-z1} by $\mathcal{S}'_{\Gamma}\subseteq\mathcal{S}'(\mathbb{R}^{n})$ the subspace of distributions supported by $\Gamma$. Consider the $E$-valued Cauchy problem
\begin{equation}\label{wnweq3.5}
\frac{\partial}{\partial t}\mathbf{U}(x,t)=P\left(\frac{\partial}{\partial x}\right)\mathbf{U}(x,t), \ \ \ \lim_{t\to0^{+}}\mathbf{U}(x,t)=\mathbf{f}(x)\ \ \ \mbox{in }\mathcal{S}'(\mathbb{R}^{n}_{x}),
\end{equation}
$$\operatorname*{supp}\hat{\mathbf{f}}\subseteq\Gamma, \ \ \ (x,t)\in \mathbb{H}^{n+1},$$
within the class of $E$-functions of slow growth over $\mathbb{H}^{n+1}$, that is,
$$
\sup_{(x,t)\in \mathbb{H}^{n+1}} \left\|\mathbf{U}(x,t)\right\|\left(t+\frac{1}{t}\right)^{-k_1}\left(1+\left|x\right|\right)^{-k_{1}}<\infty, \ \ \ \mbox{for some }k_{1},k_{2}\in\mathbb{N}.
$$
One readily verifies that (\ref{wnweq3.5}) has a unique solution satisfying the latter slow growth condition. Indeed,
$$
\mathbf{U}(x,t)=\frac{1}{(2\pi)^{n}}\left\langle \hat{\mathbf{f}}(u), e^{ix\cdot u}e^{tP(iu)}\right\rangle=\frac{1}{(2\pi)^{n}}\left\langle \hat{\mathbf{f}}(u), e^{ix\cdot u}e^{P\left(it^{1/d}u\right)}\right\rangle
$$
is the sought solution. We can find \cite{vladimirov-d-z1} a test
function $\eta\in\mathcal{S}(\mathbb{R}^{n})$ with the property
$\eta(u)=e^{P(iu)},\; u\in\Gamma.$
Choosing $\varphi\in\mathcal{S}(\mathbb{R}^{n})$ such that $\hat{\varphi}=\eta$, we express $\mathbf{U}$ as a
\[
\mathbf{U}(x,t)=\left\langle \mathbf{f}(\xi),\frac{1}{t^{n/d}}\varphi\left(\frac{x-\xi}{t^{1/d}}\right)\right\rangle=M_{\varphi}^\mathbf{f}(x,y), \ \ \ \mbox{with } y=t^{1/d}.
\]
In particular, when $P(u)=|u|^{2}$, (\ref{wnweq3.5}) is the Cauchy problem for the heat equation and $\varphi(\xi)=(2\sqrt{\pi})^{-n}e^{-\xi^{2}/4}$.

\end{example}
\begin{example}[\emph{Laplace transforms as  regularizing transforms}]\label{wnwex3.5} Let $\Gamma$ be a
closed convex acute cone \cite{vladimirovbook,vladimirov-d-z1}
with vertex at the origin. Its conjugate cone is denoted by
$\Gamma^{\ast}$. The definition of an acute cone tells us that
$\Gamma^{\ast}$ has non-empty interior, set
$C_{\Gamma}=\operatorname*{int} \Gamma^{\ast}$ and
$T^{C_{\Gamma}}=\mathbb{R}^{n}+iC_{\Gamma}.$ We denote by
$\mathcal{S}'_{\Gamma}(E)$ the subspace of $E$-valued tempered
distributions supported by $\Gamma$. Given
$\mathbf{h}\in\mathcal{S}'_{\Gamma}(E)$, its Laplace
transform \cite{vladimirovbook} is
$$
\mathcal{L}\left\{\mathbf{h};z\right\}=\left\langle \mathbf{h}(u),e^{iz\cdot u}\right\rangle, \ \ \ z\in T^{C_{\Gamma}};
$$
it is a holomorphic $E$-valued function on the tube domain
$T^{C_{\Gamma}}$. Fix $\omega\in C_{\Gamma}$. We may write
$\mathcal{L}\left\{\mathbf{h};x+i\sigma\omega\right\}$,
$x\in\mathbb{R}^{n}$, $\sigma>0$, as a $\phi-$transform. In fact,
choose $\eta_{\omega}\in\mathcal{S}(\mathbb{R}^{n})$ such that
$\eta_{\omega}(u)=e^{- \omega\cdot u},$  $u\in\Gamma. $
Then, with
$\hat{\varphi}_{\omega}=\eta_{\omega}$ and
$\hat{\mathbf{f}}=(2\pi)^{n}\mathbf{h}$,
\[
\mathcal{L}\left\{\mathbf{h};x+i\sigma\omega\right\}=M_{\varphi_{\omega}}^{\mathbf{f}}(x,\sigma).
\]
Notice that this is a particular case of Example \ref{wnwex3.4} with $P_{\omega}(\xi)=i \omega\cdot\xi$.
\end{example}



\section{Tauberian class estimates}

\label{wnwce}

We now establish the Tauberian nature of the estimate \begin{equation}
\label{wnwceeq1} \left\|M^{\mathbf{f}}_{\varphi}(x,y)\right\|\leq
C \frac{(1+y)^{k}\left(1+\left|x\right|\right)^{l}}{y^{k}}, \qquad
(x,y)\in\mathbb{H}^{n+1}.
\end{equation}
We call
(\ref{wnwceeq1}) a  \emph{global class estimate}. We will prove
that if $\mathbf{f}$ takes values in a ``broad'' locally convex
space which contains the narrower Banach space $E$, and if
$\mathbf{f}$ satisfies (\ref{wnwceeq1}) for a non-degenerate test
function $\varphi$, then, there is a distribution $\mathbf{G}$
with values in the broad space such that
$\operatorname*{supp}\hat{\mathbf{G}}\subseteq\left\{0\right\}$
and $\mathbf{f}-\mathbf{G}\in\mathcal{S}'(\mathbb{R}^{n},E).$ In
case when the broad space is a normed one, $\mathbf{G}$ simply reduces
 to a polynomial. This will be done in Subsection \ref{gce}.

We shall also investigate in Subsection \ref{lce} the consequences
of (\ref{wnwceeq1}) when it is only assumed to hold for
$(x,y)\in\mathbb{R}^{n}\times(0,1]$, we call it then a \emph{local
class estimate}. In this case the situation is slightly different
and we obtain that
$\mathbf{f}-\mathbf{G}\in\mathcal{S}'(\mathbb{R}^{n},E),$ where
$\hat{\mathbf{G}}$ has compact support but its support may not be
any longer the origin. Nevertheless, the support of the Fourier transform of correction term remains controlled by the so-called index of non-degenerateness of the test function, introduced below. 

The correction terms, in both cases of global and local estimates, can be eliminated if one augments the hypotheses by involving a convolution average with respect to another suitable  test function.

Throughout this section, unless specified, $X$ is assumed to be a
 locally convex topological vector space such
that the Banach space $E\subset X$ and  the inclusion mapping $E\rightarrow X$ is linear and continuous. Observe that
the transform (\ref{wnwneq1}) makes sense for $X$-valued
distributions as well.  Furthermore, in order to gain generality, we will work with an integral version of the class estimate \eqref{wnwceeq1}.

\subsection{Global class estimates}
\label{gce} We begin with a full characterization of $\mathcal{S}'_{0}(\mathbb{R}^{n}, E)$ in terms of the wavelet transform.
\begin{proposition}
\label{wnwcep1} Let
$\mathbf{f}\in\mathcal{S}'_{0}(\mathbb{R}^{n},X)$ and let
$\psi\in\mathcal{S}_{0}(\mathbb{R}^{n})$ be a non-degenerate
wavelet. The following two conditions,
\begin{align}
\label{wnwceeq2} &\mathcal{W}_{\psi}\mathbf{f}(x,y)\in E, \mbox{
  for almost all value of } (x,y)\in\mathbb{H}^{n+1},
  \\
  & \nonumber
  \mbox{ and it is measurable as an } E\mbox{-valued function},
\end{align}
and there are constants $k,l\in\mathbb{N}$ such that
\begin{equation}
\label{wnwceeq4}
\int_{0}^{\infty}\int_{\mathbb{R}^{n}}\left(\frac{1}{y}+y\right)^{-k}\left(1+\left|x\right|\right)^{-l}
\left\|\mathcal{W}_{\psi}\mathbf{f}(x,y)\right\|\mathrm{d}x\mathrm{d}y<\infty
\end{equation}
are necessary and sufficient for
$\mathbf{f}\in\mathcal{S}'_{0}(\mathbb{R}^{n},E)$.
\end{proposition}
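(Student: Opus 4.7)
The proof splits into necessity and sufficiency, both organized around the identification $\mathcal{W}_{\psi}\mathbf{f}=M^{\mathbf{f}}_{\check{\bar{\psi}}}$, which anchors the wavelet transform in the regularizing-transform machinery of Section~\ref{wnwtd}.

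Necessity is the easier half. Given $\mathbf{f}\in\mathcal{S}'_{0}(\mathbb{R}^{n},E)$, I would lift it to some $\tilde{\mathbf{f}}\in\mathcal{S}'(\mathbb{R}^{n},E)$ (possible since the restriction map $\mathcal{S}'(\mathbb{R}^{n},E)\to\mathcal{S}'_{0}(\mathbb{R}^{n},E)$ is surjective) and apply Proposition~\ref{wnwp2} with the kernel $\check{\bar{\psi}}\in\mathcal{S}(\mathbb{R}^{n})$. Since $\bar{\psi}$ has all vanishing moments, any polynomial difference of two lifts is killed by convolution with $\check{\bar{\psi}}_{y}$, so $M^{\tilde{\mathbf{f}}}_{\check{\bar{\psi}}}=\mathcal{W}_{\psi}\mathbf{f}$ depends only on $\mathbf{f}$. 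Proposition~\ref{wnwp2} then delivers the pointwise $E$-norm bound $\|\mathcal{W}_{\psi}\mathbf{f}(x,y)\|\leq C(1/y+y)^{k_{0}}(1+|x|)^{l_{0}}$ together with smoothness in $E$. Condition~\eqref{wnwceeq2} is therefore automatic, and \eqref{wnwceeq4} follows by taking $k$ and $l$ large enough that the weighted integrand becomes absolutely Lebesgue integrable on $\mathbb{H}^{n+1}$.

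Sufficiency is the substantive direction. Non-degeneracy of $\psi$ secures, via \cite[Prop.~5.1]{Pil-Vin2014}, a reconstruction wavelet $\eta\in\mathcal{S}_{0}(\mathbb{R}^{n})$ with $c_{\psi,\eta}\neq 0$. I would fix an arbitrary $\rho\in\mathcal{S}_{0}(\mathbb{R}^{n})$ and evaluate $\langle\mathbf{f},\rho\rangle$ through the desingularization identity
\[
\langle\mathbf{f},\rho\rangle=\frac{1}{c_{\psi,\eta}}\int_{0}^{\infty}\int_{\mathbb{R}^{n}}\mathcal{W}_{\psi}\mathbf{f}(x,y)\,\mathcal{W}_{\bar{\eta}}\rho(x,y)\,\frac{\mathrm{d}x\,\mathrm{d}y}{y},
\]
first extended from the $E$-valued formula~\eqref{wnwneq12} to $\mathbf{f}\in\mathcal{S}'_{0}(\mathbb{R}^{n},X)$ by a routine duality argument based on the scalar reconstruction identity~\eqref{wnwneq8} on $\mathcal{S}_{0}(\mathbb{R}^{n})$; the $X$-valued Bochner integral converges because the $X$-valued analogue of Proposition~\ref{wnwp2} gives polynomial growth of $\mathcal{W}_{\psi}\mathbf{f}$ in $X$, which is absorbed by the Schwartz decay of $\mathcal{W}_{\bar{\eta}}\rho\in\mathcal{S}(\mathbb{H}^{n+1})$. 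With this identity in place, hypotheses \eqref{wnwceeq2} and \eqref{wnwceeq4} force the integrand to be Bochner measurable into $E$ with $E$-norm absolutely integrable against $\mathcal{W}_{\bar{\eta}}\rho$, so the integral actually converges in $E$. Hence $\langle\mathbf{f},\rho\rangle\in E$ for every $\rho\in\mathcal{S}_{0}(\mathbb{R}^{n})$, and continuity of $\rho\mapsto\langle\mathbf{f},\rho\rangle\colon\mathcal{S}_{0}(\mathbb{R}^{n})\to E$ follows from the continuity of $\mathcal{W}_{\bar{\eta}}\colon\mathcal{S}_{0}(\mathbb{R}^{n})\to\mathcal{S}(\mathbb{H}^{n+1})$ combined with the integrable majorant supplied by \eqref{wnwceeq4}.

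The main technical obstacle is the bookkeeping at the $E$/$X$ interface: hypothesis~\eqref{wnwceeq2} is exactly what is needed to promote the a priori $X$-valued Bochner integral in the desingularization formula to an $E$-valued one, and it cannot be weakened to mere pointwise a.e.\ $E$-valuedness when $E$ fails to be separable. Extending the desingularization identity itself from $E$ to $X$, although conceptually transparent, will also require careful handling of the locally convex topology of $X$.
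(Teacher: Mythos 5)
Your proof is correct and takes essentially the same route as the paper. The one organizational difference: the paper explicitly identifies $\mathbf{K}=\mathcal{W}_{\psi}\mathbf{f}$ as an element of $\mathcal{S}'(\mathbb{H}^{n+1},E)$, applies the synthesis operator to produce $\tilde{\mathbf{f}}=\mathcal{M}_{\eta}\mathbf{K}\in\mathcal{S}'_{0}(\mathbb{R}^{n},E)$ (so $E$-valuedness and continuity of $\tilde{\mathbf{f}}$ come for free from the mapping properties of $\mathcal{M}_{\eta}$), and then proves $\tilde{\mathbf{f}}=\mathbf{f}$; you instead apply the desingularization formula directly to each $\rho$ and then append a separate continuity argument. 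Both reductions hinge on exactly the same interchange of the dual pairing with the Bochner integral, justified by testing against $\mathbf{w}^{\ast}\in X'$ and using that $X$ is Hausdorff and locally convex, which is precisely the argument the paper spells out through its equations \eqref{wnwceeq6}--\eqref{wnwceeq8}.
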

\begin{proof}
The necessity is clear (Proposition \ref{wnwp2}). We show the
sufficiency. Let $\eta$ be a reconstruction wavelet for $\psi$. We
apply the wavelet synthesis operator to the function
$\mathbf{K}(x,y)=\mathcal{W}_{\psi}\mathbf{f}(x,y)$, this is valid
because our assumptions (\ref{wnwceeq2}) and (\ref{wnwceeq4}) ensure
that $\mathbf{K}\in\mathcal{S}'(\mathbb{H}^{n+1},E)$. So, set
$\tilde{\mathbf{f}}:=\mathcal{M}_{\eta}\mathbf{K}\in\mathcal{S}'_{0}(\mathbb{R}^{n},E)
\subset\mathcal{S}'_{0}(\mathbb{R}^{n},X).$
 We must therefore
show $\tilde{\mathbf{f}}=\mathbf{f}$. Let
$\rho\in\mathcal{S}_{0}(\mathbb{R}^{n})$. We have, by definition (cf. \eqref{wnwneq10}),
(\ref{wnwneq6}), and (\ref{wnwneq8}),
\begin{equation*}
\langle
\tilde{\mathbf{f}},\rho\rangle=\frac{1}{c_{\psi,\eta}}\int_{0}^{\infty}\int_{\mathbb{R}^{n}}\left\langle
\mathbf{f}(t),
\frac{1}{y^{n}}\bar{\psi}\left(\frac{t-x}{y}\right)\mathcal{W}_{\bar{\eta}}\rho(x,y)\right\rangle\frac{\mathrm{d}x\mathrm{d}y}{y}
\end{equation*}
and
\begin{equation*}
\left\langle
\mathbf{f},\rho\right\rangle=\frac{1}{c_{\psi,\eta}}\left\langle
\mathbf{f},\mathcal{M}_{\bar{\psi}}\mathcal{W}_{\bar{\eta}}\rho\right\rangle
 =\frac{1}{c_{\psi,\eta}}\left\langle
\mathbf{f}(t),\int_{0}^{\infty}\int_{\mathbb{R}^{n}}
\frac{1}{y^{n}}\bar{\psi}\left(\frac{t-x}{y}\right)\mathcal{W}_{\bar{\eta}}
\rho(x,y)\right\rangle\frac{\mathrm{d}x\mathrm{d}y}{y}\:
.
\end{equation*}
Thus, with
$$\Phi(x,y;t)=\frac{1}{y^{n+1}}\bar{\psi}\left(\frac{t-x}{y}\right)\mathcal{W}_{\bar{\eta}}\rho(x,y),$$
our problem reduces to justify the interchange of the integrals
with the dual pairing in
\begin{equation}
\label{wnwceeq5}
\int_{0}^{\infty}\int_{\mathbb{R}^{n}}\left\langle
\mathbf{f}(t),\Phi(x,y;t) \right\rangle\mathrm{d}x\mathrm{d}y=
\left\langle
\mathbf{f}(t),\int_{0}^{\infty}\int_{\mathbb{R}^{n}}\Phi(x,y;t)\mathrm{d}x\mathrm{d}y\right\rangle
.
\end{equation}
To show (\ref{wnwceeq5}), we verify that
\begin{equation}
\label{wnwceeq6} 
\left\langle
\mathbf{w}^\ast,\int_{0}^{\infty}\int_{\mathbb{R}^{n}}\left\langle
\mathbf{f}(t),\Phi(x,y;t)
\right\rangle\mathrm{d}x\mathrm{d}y\right\rangle=
\left\langle \mathbf{w}^\ast,\left\langle
\mathbf{f}(t),\int_{0}^{\infty}\int_{\mathbb{R}^{n}}\Phi(x,y;t)\mathrm{d}x\mathrm{d}y\right\rangle
\right\rangle ,
\end{equation}
 for arbitrary $\mathbf{w}^\ast\in X'$ (here is
where the local convexity and the Hausdorff property of $X$ play a role). Since the integral
involved in the left hand side of \eqref{wnwceeq6} is a
Bochner integral in $E$ and the restriction of $\mathbf{w}^{\ast}$
to $E$ belongs to $E'$, we obtain at once the exchange formula
\begin{equation}
\label{wnwceeq7} 
\left\langle
\mathbf{w}^\ast,\int_{0}^{\infty}\int_{\mathbb{R}^{n}}\left\langle
\mathbf{f}(t),\Phi(x,y;t)
\right\rangle\mathrm{d}x\mathrm{d}y\right\rangle=
\int_{0}^{\infty}\int_{\mathbb{R}^{n}}\left\langle
\mathbf{w}^\ast,\left\langle \mathbf{f}(t),\Phi(x,y;t)
\right\rangle\right\rangle\mathrm{d}x\mathrm{d}y.
\end{equation}
On the other
hand, we may write $
\int_{0}^{\infty}\int_{\mathbb{R}^{n}}\Phi(x,y;t)\mathrm{d}x\mathrm{d}y
$
as the limit of Riemann sums, convergent in
$\mathcal{S}_{0}(\mathbb{R}^{n}_{t})$, we then easily justify the
exchanges that yield
\begin{equation}
\label{wnwceeq8}
\left\langle \mathbf{w}^\ast,\left\langle
\mathbf{f}(t),\int_{0}^{\infty}\int_{\mathbb{R}^{n}}\Phi(x,y;t)\mathrm{d}x\mathrm{d}y\right\rangle
\right\rangle=
\int_{0}^{\infty}\int_{\mathbb{R}^{n}}\left\langle
\mathbf{w}^\ast,\left\langle \mathbf{f}(t),\Phi(x,y;t)
\right\rangle\right\rangle\mathrm{d}x\mathrm{d}y.
\end{equation}
The equality (\ref{wnwceeq6}) follows now by comparing
(\ref{wnwceeq7}) and (\ref{wnwceeq8}).
\end{proof}

We now consider the general case of regularizing transforms with respect to non-degenerate kernels (cf. Definition \ref{wnwd1}).
\begin{theorem}
\label{wnwceth1} Let $\mathbf{f}\in\mathcal{S}'(\mathbb{R}^{n},X)$
and let $\varphi\in\mathcal{S}(\mathbb{R}^{n})$ be non-degenerate. Sufficient conditions for the
existence of an $X$-valued distribution $\mathbf{G}\in
\mathcal{S}'(\mathbb{R}^{n},X)$ such that
$\mathbf{f}-\mathbf{G}\in\mathcal{S}'(\mathbb{R}^{n},E)$ and
$\operatorname*{supp} \hat{\mathbf{G}}\subseteq\left\{0\right\}$
are:
\begin{enumerate}
\item[(i)] $M_{\varphi}^{\mathbf{f}}(x,y)$ takes values in $E$ for almost
all $(x,y)\in\mathbb{H}^{n+1}$ and is measurable as an $E$-valued function.
\item[(ii)] There exist $k,l\in\mathbb{N}$ such that 
\begin{equation}
\label{global ce eq}
\int_{0}^{\infty}\int_{\mathbb{R}^{n}}\left(\frac{1}{y}+y\right)^{-k}\left(1+\left|x\right|\right)^{-l}
\left\|M_{\varphi}^{\mathbf{f}}(x,y)\right\|\mathrm{d}x\mathrm{d}y<\infty
\end{equation}
\end{enumerate}
In this case, we also have
\begin{equation}
\label{wnwceeq10} P_{q}^{\varphi}\left(\frac{\partial}{\partial
t}\right)\mathbf{f}\in \mathcal{S}'(\mathbb{R}^{n},E) , \qquad
\mbox{for all } q\in\mathbb{N},
\end{equation}
where $P_{q}^{\varphi}$ are the homogeneous terms of the Taylor
polynomials of $\hat{\varphi}$ at the origin (cf. \eqref{taylorhomoeq})
\end{theorem}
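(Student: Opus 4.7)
My plan is to reduce the theorem to Proposition~\ref{wnwcep1} by rewriting $M_{\varphi}^{\mathbf{f}}$ as a genuine wavelet transform with an $\mathcal{S}_{0}$-kernel, and then to lift the Lizorkin conclusion to $\mathcal{S}'(\mathbb{R}^{n},E)$. The technical heart of the argument is to build a non-degenerate wavelet $\psi\in\mathcal{S}_{0}(\mathbb{R}^{n})$ and a scalar kernel $\chi\in\mathcal{S}_{0}(\mathbb{R}^{n})$ satisfying $\check{\bar{\psi}}=\varphi\ast\chi$. For this I would pick $\psi_{0}\in\mathcal{S}_{0}(\mathbb{R}^{n})$ whose Fourier transform is nowhere zero on $\mathbb{R}^{n}\setminus\{0\}$ (for example, $\hat{\psi_{0}}(\xi)=e^{-1/|\xi|^{2}-|\xi|^{2}}$ for $\xi\neq 0$, extended by $0$ at $\xi=0$, which is Schwartz and vanishes to infinite order at $0$) and set $\psi:=\check{\bar{\varphi}}\ast\psi_{0}$ and $\chi:=\bar{\check{\psi_{0}}}$. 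Then $\hat{\psi}=\overline{\hat{\varphi}}\,\hat{\psi_{0}}$ vanishes to infinite order at $0$ and, by the non-degeneracy of $\varphi$ (Definition~\ref{wnwd1}), is not identically zero on any ray, so $\psi$ is a non-degenerate Lizorkin wavelet; a direct computation gives $\check{\bar{\psi}}=\varphi\ast\chi$.

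The scaling identity $(\varphi\ast\chi)_{y}=\varphi_{y}\ast\chi_{y}$ then yields the crucial representation
\[
\mathcal{W}_{\psi}\mathbf{f}(x,y)=M_{\check{\bar{\psi}}}^{\mathbf{f}}(x,y)=\bigl(M_{\varphi}^{\mathbf{f}}(\cdot,y)\ast\chi_{y}\bigr)(x),
\]
an $E$-valued Bochner convolution (well defined since $M_{\varphi}^{\mathbf{f}}(\cdot,y)\in E$ almost everywhere and $\chi_{y}$ is scalar). A Fubini argument using the elementary bounds $(1+|x|)^{-l'}\le(1+|x-z|)^{-l'}(1+|z|)^{l'}$ and $\int(1+|z|)^{l'}|\chi_{y}(z)|\mathrm{d}z\le C(1+y)^{l'}$ transfers hypothesis (ii) into a weighted $L^{1}$-bound of the form \eqref{wnwceeq4} for $\mathcal{W}_{\psi}\mathbf{f}$ (with slightly enlarged exponents). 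Proposition~\ref{wnwcep1} now gives $\mathbf{f}|_{\mathcal{S}_{0}(\mathbb{R}^{n})}\in\mathcal{S}'_{0}(\mathbb{R}^{n},E)$. To promote this to $\mathcal{S}'(\mathbb{R}^{n},E)$, I would use that $\mathcal{S}_{0}(\mathbb{R}^{n})$ is topologically complemented in $\mathcal{S}(\mathbb{R}^{n})$: the Borel lemma produces a continuous projection $\pi\colon\mathcal{S}(\mathbb{R}^{n})\to\mathcal{S}_{0}(\mathbb{R}^{n})$. Composing $\mathbf{f}|_{\mathcal{S}_{0}}$ with $\pi$ extends it to $\tilde{\mathbf{f}}\in\mathcal{S}'(\mathbb{R}^{n},E)$ agreeing with $\mathbf{f}$ on $\mathcal{S}_{0}$, and $\mathbf{G}:=\mathbf{f}-\tilde{\mathbf{f}}\in\mathcal{S}'(\mathbb{R}^{n},X)$ vanishes on $\mathcal{S}_{0}$; by Fourier duality $\hat{\mathbf{G}}$ annihilates every Schwartz function supported away from the origin, so $\operatorname*{supp}\hat{\mathbf{G}}\subseteq\{0\}$.

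For \eqref{wnwceeq10}, $\hat{\mathbf{G}}$ is tempered and supported at $\{0\}$, hence of finite order $N$ (by Banach--Steinhaus on the defining seminorms), so $\mathbf{G}$ is a polynomial of degree at most $N$ with $X$-valued coefficients. Taylor-expanding $\hat{\varphi}(yu)$ at $u=0$ and observing that $\hat{\mathbf{G}}$ only pairs with jets up to order $N$, the Fourier identity $\widehat{M_{\varphi}^{\mathbf{G}}(\cdot,y)}(u)=\hat{\mathbf{G}}(u)\hat{\varphi}(yu)$ collapses to
\[
M_{\varphi}^{\mathbf{G}}(x,y)=\sum_{q=0}^{N}(-iy)^{q}P_{q}^{\varphi}\!\left(\frac{\partial}{\partial x}\right)\mathbf{G}(x).
\]
Since $M_{\varphi}^{\mathbf{G}}=M_{\varphi}^{\mathbf{f}}-M_{\varphi}^{\tilde{\mathbf{f}}}$ takes values in $E$ almost everywhere and the right-hand side is polynomial in $y$, a Vandermonde inversion in $y$ (applied pointwise in $x$ using any $N+1$ distinct values of $y$) forces each coefficient $P_{q}^{\varphi}(\partial)\mathbf{G}$ to be $E$-valued; adding the trivially $E$-valued $P_{q}^{\varphi}(\partial)\tilde{\mathbf{f}}$ gives \eqref{wnwceeq10}. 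The main obstacle is the first step: engineering a Lizorkin wavelet $\psi$ whose wavelet transform of $\mathbf{f}$ is exactly a scalar convolution of $M_{\varphi}^{\mathbf{f}}$, which is what allows mere non-degeneracy of $\varphi$ (rather than the strong non-degeneracy of \cite{drozhzhinov-z3}) to suffice.
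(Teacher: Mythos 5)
Your proof of the first assertion is essentially the paper's own argument: convolving $\check{\bar{\varphi}}$ against a Lizorkin function with nowhere-vanishing $\widehat{\psi_0}$ to obtain a non-degenerate Lizorkin wavelet $\psi$ with $\check{\bar{\psi}}=\varphi\ast\chi$, then expressing $\mathcal{W}_\psi\mathbf{f}$ as a Bochner convolution of $M_\varphi^{\mathbf{f}}(\cdot,y)$ against the scalar kernel $\chi_y$, transferring the integral bound by Fubini, and applying Proposition~\ref{wnwcep1}. (The paper uses $\hat{\psi}_1(u)=e^{-|u|-1/|u|}$ and writes the convolution in the form $\int M^{\mathbf{f}}_{\varphi}(x+yu,y)\bar{\psi}_1(u)\,\mathrm{d}u$, but these are the same calculation.) Your use of a continuous projection $\pi:\mathcal{S}(\mathbb{R}^{n})\to\mathcal{S}_{0}(\mathbb{R}^{n})$ to produce an extension $\tilde{\mathbf{f}}\in\mathcal{S}'(\mathbb{R}^n,E)$ is a legitimate way to make explicit the extension step that the paper leaves implicit, and the resulting $\mathbf{G}=\mathbf{f}-\tilde{\mathbf{f}}$ with $\operatorname{supp}\hat{\mathbf{G}}\subseteq\{0\}$ is exactly what is wanted.

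Your argument for \eqref{wnwceeq10}, however, contains a genuine gap. You assert that $\hat{\mathbf{G}}$, being tempered and supported at $\{0\}$, is of finite order ``by Banach--Steinhaus,'' so that $\mathbf{G}$ is an $X$-valued polynomial of some degree $N$. This is false when $X$ is merely a locally convex space and not normed. Banach--Steinhaus only gives, for each continuous seminorm $\mathfrak{p}$ on $X$, a finite $N_{\mathfrak{p}}$ with $\mathfrak{p}\bigl(\langle\hat{\mathbf{G}},\rho\rangle-\sum_{|m|\leq N_{\mathfrak{p}}}\rho^{(m)}(0)\,\mu_m(\hat{\mathbf{G}})/m!\bigr)=0$; the bound $N_{\mathfrak{p}}$ may grow without bound over the family of seminorms. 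The paper emphasizes precisely this: $\mathbf{G}$ is in general an $X$-valued \emph{entire function} given by an infinite power series, not a polynomial, and Example~\ref{wnwceex1} (with $X=C(\mathbb{R})$, $E=C_b(\mathbb{R})$) exhibits a $\mathbf{G}=\sum_{q}Q_q(t)\chi_q(\xi)$ whose Fourier transform is a genuinely infinite multipole series. As a consequence, your ``Vandermonde inversion in $y$'' does not apply: $M_\varphi^{\mathbf{G}}(x,y)$ is not a polynomial in $y$, and one cannot isolate finitely many coefficients by sampling $N+1$ values of $y$. The paper instead works directly with the full expansion $M_\varphi^{\mathbf{G}}(x,y)=\sum_{q=0}^{\infty}(iy)^q\bigl(P_q^{\varphi}(\partial/\partial x)\mathbf{G}\bigr)(x)$, which converges in $X$ for each $(x,y)$, and reads off each coefficient from the fact that the sum lies in $E$. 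If you want to retain a polynomial picture and a finite Vandermonde argument, you must first restrict to normed $X$, which is precisely Corollary~\ref{wnwcec1}, not the theorem itself.
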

\begin{proof} 
Consider the non-degenerate wavelet $\psi_{1}\in\mathcal{S}_{0}(\mathbb{R}^{n})$ given by $\hat{\psi}_{1}(u)=e^{-\left|u\right|-(1/\left|u\right|)}$. Set $\psi=\check{\bar{\varphi}}\ast\psi_{1}$; then, $\psi\in\mathcal{S}_{0}(\mathbb{R}^{n})$ is also a non-degenerate wavelet. Using the same argument as in the proof of
Proposition \ref{wnwcep1}, the exchange of integral and dual
paring performed in the following calculation is valid,
\begin{align*}
\mathcal{W}_{\psi}\mathbf{f}(x,y)&=\left\langle \mathbf{f}(x+yt),(\check{\varphi}\ast\bar{\psi}_{1})(t)\right\rangle
\\
&
=\left\langle \mathbf{f}(x+yt),\int_{\mathbb{R}^{n}}\bar{\psi}_{1}(u)\varphi(u-t)\mathrm{d}u \right\rangle
\\
&=\int_{\mathbb{R}^{n}}\bar{\psi}_{1}(u) \left\langle \mathbf{f}(x+yt),\varphi(u-t) \right\rangle\mathrm{d}u
\\
&
=\int_{\mathbb{R}^{n}}M^{\mathbf{f}}_{\varphi}(x+yu,y)\bar{\psi}_{1}(u)\mathrm{d}u.
\end{align*}
where the integral is taken in the sense of Bochner. Thus, the
restriction of $\mathbf{f}$ to $\mathcal{S}_{0}(\mathbb{R}^{n})$
is readily seen to satisfy the hypotheses of Proposition \ref{wnwcep1}, and hence
there exists $\mathbf{g}\in\mathcal{S}'(\mathbb{R}^{n},E)$ such
that $\left\langle \mathbf{f-g},\rho\right\rangle=0$ for all
$\rho\in\mathcal{S}_{0}(\mathbb{R}^{n})$. This gives at once that
$\mathbf{G}=\mathbf{f-g}$ satisfies $\operatorname*{supp}
\hat{\mathbf{G}}\subseteq\left\{0\right\}$ and
$\mathbf{f-G}\in\mathcal{S}'(\mathbb{R}^{n},E)$. 

Next, it is clear that $\mathbf{G}$ is an $X$-valued entire function with power series expansion, say, 
$
\mathbf{G}(t)=\sum_{m\in\mathbb{N}^{n}}i^{\left|m\right|} t^{m} \mathbf{w}_{m},$ with $\mathbf{w}_{m}\in X,
$
so that $\hat{\mathbf{G}}$ is given by the multipole series
$$
\hat{\mathbf{G}}(u)=\sum_{m\in\mathbb{N}^{n}} (-1)^{|m|} \delta^{(m)}(u)\mathbf{w}_{m},
$$
convergent in $\mathcal{S}'(\mathbb{R}^{n},X)$. The relation (\ref{wnwceeq10}) would follow
immediately if we show that
$P_{q}^{\varphi}\left(\partial/\partial t\right)\mathbf{G}$ is 
$E$-valued. Observe that the hypotheses imply that
$M_{\varphi}^{\mathbf{G}}(x,y)\in E$, for almost all $(x,y)$.
Hence, for almost all $(x,y)$,
\begin{align*}
M_{\varphi}^{\mathbf{G}}(x,y)&
=\frac{1}{(2\pi)^{n}}\left\langle \hat{\mathbf{G}}(u),e^{ix\cdot
u}{\hat{\varphi}}(yu) \right\rangle
=
\sum_{m\in \mathbb{N}}
\frac{\partial^{\left|m\right|}}{\partial u^{m}}\left.\left(
e^{ix\cdot u}{\hat{\varphi}}(yu)\right)\right|_{u=0}
\mathbf{w}_{m}
\\
& =
\sum_{q=0}^{\infty}y^{q}\sum_{\left|j\right|=q}{\hat{\varphi}^{(j)}(0)}\sum_{j\leq m}\binom{m}{j}(ix)^{m-j}\mathbf{w}_{m}
\\
& = \sum_{q=0}^{\infty}(iy)^{q}(P^{\varphi}_{q}\left(\partial/\partial
x\right)\mathbf{G})(x)\in E.
\end{align*}
But the latter readily implies that
$(P_{q}^{\varphi}\left(\partial/\partial
x\right)\mathbf{G})(x)\in E,$ for all $q\in\mathbb{N}$ and 
$x\in\mathbb{R}^{n}.$

\end{proof}

When $X$ is a normed space, we obviously have that the only
$X$-valued distributions with support at the origin are precisely
those having the form 
$$
\sum_{\left|m\right|\leq
N}\delta^{(m)}\mathbf{w}_{m}, \ \ \ \mathbf{w}_{m}\in X.
$$
Thus, we
have,

\begin{corollary}
\label{wnwcec1} Let $X$ be a normed space, $\mathbf{f}\in\mathcal{S}'(\mathbb{R}^{n},X)$,
and let $\varphi\in\mathcal{S}(\mathbb{R}^{n})$ be non-degenerate. Then, the conditions
(i) and (ii) of Theorem \ref{wnwceth1} imply the existence of an
$X$-valued polynomial $\mathbf{P}$ such that
$\mathbf{f-P}\in\mathcal{S}'(\mathbb{R}^{n},E)$. Furthermore,
$$\mathbf{f}\in \mathcal{S}_{I_{\varphi}}(\mathbb{R}^{n},E),$$ 
where $I_{\varphi}$ is the ideal generated by the homogeneous terms of the Taylor polynomial of $\hat{\varphi}$ at the origin (cf. \eqref{taylorhomoeq} and \eqref{classestIdealeq}).
\end{corollary}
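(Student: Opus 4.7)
The plan is to feed the hypotheses into Theorem \ref{wnwceth1} and then exploit the assumption that $X$ is normed in two ways: first to collapse the correction term $\mathbf{G}$ to a polynomial, and second to reduce the $\mathcal{S}'_{I_{\varphi}}$-assertion to a finite-dimensional linear-algebraic question.

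To begin, I would invoke Theorem \ref{wnwceth1} to obtain $\mathbf{G}\in\mathcal{S}'(\mathbb{R}^{n},X)$ with $\operatorname*{supp}\hat{\mathbf{G}}\subseteq\{0\}$ and $\mathbf{f}-\mathbf{G}\in\mathcal{S}'(\mathbb{R}^{n},E)$, retaining the extra conclusion $P_{q}^{\varphi}(\partial/\partial t)\mathbf{f}\in\mathcal{S}'(\mathbb{R}^{n},E)$ for every $q\in\mathbb{N}$. The remark preceding the corollary (this is precisely where the normability of $X$ is used, to force locally finite order) then gives $\mathbf{G}=\sum_{|m|\leq N}\delta^{(m)}\mathbf{w}_{m}$ with $\mathbf{w}_{m}\in X$. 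Transforming back, $\mathbf{G}$ is realized by the polynomial $\mathbf{P}(t)=\sum_{|m|\leq N}i^{|m|}t^{m}\mathbf{w}_{m}$, and the first assertion $\mathbf{f}-\mathbf{P}\in\mathcal{S}'(\mathbb{R}^{n},E)$ drops out.

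For the second assertion, it suffices to show that $\langle\mathbf{P},\phi\rangle\in E$ for every $\phi\in\mathcal{S}_{I_{\varphi}}(\mathbb{R}^{n})$, since $\mathbf{f}-\mathbf{P}$ is already $E$-valued. The starting observation is that subtracting the $E$-valued distribution $P_{q}^{\varphi}(\partial/\partial t)(\mathbf{f}-\mathbf{P})$ from the $E$-valued $P_{q}^{\varphi}(\partial/\partial t)\mathbf{f}$ shows that the polynomial $P_{q}^{\varphi}(\partial/\partial t)\mathbf{P}$ has all of its coefficients in $E$, for each $q\geq 0$. I would then pass to a finite-dimensional quotient: let $V\subseteq X$ be the span of $\{\mathbf{w}_{m}:|m|\leq N\}$, set $W=V\cap E$, and fix a basis $\bar v_{1},\dots,\bar v_{r}$ of $V/W$. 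Writing the reduction as $\bar{\mathbf{P}}(t)=\sum_{j=1}^{r}p_{j}(t)\bar v_{j}$, the previous observation gives $P_{q}^{\varphi}(\partial/\partial t)\bar{\mathbf{P}}=0$ in $(V/W)[t]$ for every $q$, and linear independence of the $\bar v_{j}$ forces every scalar polynomial $p_{j}$ to lie in the annihilator $\mathcal{A}=\{Q\in\mathbb{C}[t]:P_{q}^{\varphi}(\partial/\partial t)Q=0,\ q\in\mathbb{N}\}$. The moment-theoretic characterization of $\mathcal{S}_{I_{\varphi}}(\mathbb{R}^{n})$ recalled in Subsection \ref{wnw sub spaces} then yields $\int_{\mathbb{R}^{n}}p_{j}(t)\phi(t)\mathrm{d}t=0$ for every $\phi\in\mathcal{S}_{I_{\varphi}}(\mathbb{R}^{n})$ and every $j$, whence $\langle\bar{\mathbf{P}},\phi\rangle=0$ in $V/W$, that is, $\langle\mathbf{P},\phi\rangle\in W\subseteq E$. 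Continuity of the resulting map $\mathcal{S}_{I_{\varphi}}(\mathbb{R}^{n})\to E$ is automatic because $\phi\mapsto\langle\mathbf{P},\phi\rangle$ takes values in the finite-dimensional subspace $W$, on which the $E$-norm and the topology inherited from $X$ are equivalent.

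The only real conceptual obstacle is the translation of the \emph{differential} information ``$P_{q}^{\varphi}(\partial/\partial t)\mathbf{P}$ is $E$-valued'' into the \emph{moment} condition defining $\mathcal{S}_{I_{\varphi}}$. Quotienting by $V\cap E$ is what converts the vector-valued condition into a scalar one, at which point the annihilator characterization of $\mathcal{S}_{I}(\mathbb{R}^{n})$ supplied in the preliminaries closes the argument.
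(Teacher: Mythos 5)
Your proof is correct, and it reaches the second assertion by a genuinely different route than the paper. Both arguments open the same way: invoke Theorem \ref{wnwceth1} and the remark preceding the corollary to produce the $X$-valued polynomial $\mathbf{P}$ with $\mathbf{f}-\mathbf{P}\in\mathcal{S}'(\mathbb{R}^{n},E)$, then note that each $P_{q}^{\varphi}(\partial/\partial t)\mathbf{P}$ is an $E$-valued polynomial. From there the paper works directly with the \emph{defining} property of $\mathcal{S}_{I_{\varphi}}(\mathbb{R}^{n})$: for $\phi\in\mathcal{S}_{I_{\varphi}}(\mathbb{R}^{n})$ the $N$-th Taylor polynomial of $\hat{\phi}$ at the origin factors as $\sum_{q\leq N}Q_{q}P_{q}^{\varphi}$, and a short Fourier-pairing computation expresses $\int\phi\,\mathbf{P}\,\mathrm{d}t$ as a finite combination $\sum_{q}(-i)^{q}Q_{q}(-i\partial)(P_{q}^{\varphi}(\partial)\mathbf{P})(0)\in E$. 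You instead use the \emph{dual} (moment/annihilator) characterization of $\mathcal{S}_{I}(\mathbb{R}^{n})$ recalled in Subsection \ref{wnw sub spaces}: after quotienting the finite-dimensional coefficient span $V$ by $W=V\cap E$, the $E$-valuedness of $P_{q}^{\varphi}(\partial)\mathbf{P}$ becomes the vanishing $P_{q}^{\varphi}(\partial)\bar{\mathbf{P}}=0$, linear independence reduces this to scalar polynomials $p_{j}$ in the annihilator of $\{P_{q}^{\varphi}\}$, and the characterization gives $\int p_{j}\phi=0$, hence $\langle\mathbf{P},\phi\rangle\in W\subseteq E$. Your quotient-and-annihilator argument sidesteps the explicit factorization of the Taylor polynomial and the Fourier-side bookkeeping, at the cost of invoking the nontrivial equivalence from \cite{drozhzhinov-z3}; the paper's computation is more hands-on but keeps everything at the level of the primary definition. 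Both are valid, and your added remark on continuity (automatic since $\langle\mathbf{P},\cdot\rangle$ lands in the finite-dimensional $W$) is a sound, if strictly unnecessary, precaution.
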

\begin{proof} It remains to verify that $\mathbf{f}\in \mathcal{S}_{I_{\varphi}}(\mathbb{R}^{n},E)$. 
By Theorem \ref{wnwceth1}, we have that if  $\mathbf{P}$ is an $X$-valued polynomial such that
$\mathbf{f}-\mathbf{P}\in\mathcal{S}'(\mathbb{R}^{n},E)$, then  the polynomials $P_{q}^{\varphi}\left(\frac{\partial}{\partial
t}\right)\mathbf{P}$ are $E$-valued for all $q\in\mathbb{N}.$ We must show that if $\phi \in \mathcal{S}_{I_{\varphi}}(\mathbb{R}^{n})$, then $\int_{\mathbb{R}^{n}}\phi(t)\mathbf{P}(t)\mathrm{d}t\in E$. Let $N$ be the degree of $\mathbf{P}$. There are polynomials $Q_{q}$ such that 

$$T_{N}(u)=\sum_{|m|\leq N} \frac{\hat{\phi}^{(m)}(0)}{m!} u^{m}= \sum_{q=0}^{N} Q_{q}(u) P^{\varphi}_{q}(u).$$
So,
\begin{align*}
 \int_{\mathbb{R}^{n}}\phi(t)\mathbf{P}(t)\mathrm{d}t&=\langle \mathbf{P}\left(-i\partial/\partial u\right)\delta (u), \hat{\phi}(u)\rangle 
 =\sum_{q=0}^{N}\langle \mathbf{P}\left(-i\partial/\partial u\right)\delta (u), Q_q(u)P^{\varphi}_{q}(u)\rangle 
 \\
 &
 =
 \sum_{q=0}^{N} (-i)^{q} Q_{q}(-i\partial/\partial u) (P^{\varphi}_{q}(\partial/\partial u)\mathbf{P}) (0)\in E.
\end{align*}
\end{proof}

Note that the degree of the polynomial $\mathbf{P}$ occurring in Corollary \ref{wnwcec2} may depend merely on $\mathbf{f}$, and not on the test function. However, when the Taylor polynomials of $\hat{\varphi}$ posses a rich algebraic structure, it is possible to say more about the degree of $\mathbf{P}$. Recall $\mathbb{P}_{d}(\mathbb{R}^{n})$ was defined in Subsection \ref{wnw sub spaces}.

\begin{corollary}
\label{wnwcec2} Let the hypotheses of Corollary \ref{wnwcec1} be satisfied. If there exists $d\in\mathbb{N}$ such that $\mathbb{P}_{d}(\mathbb{R}^{n})$
is contained in the ideal generated by the polynomials $P_{0}^{\varphi}, \dots, P_{d}^{\varphi}$, then there exists an $X$-valued polynomial $\mathbf{P}$ of degree at most $d-1$ such that $\mathbf{f}-\mathbf{P}\in\mathcal{S}'(\mathbb{R}^{n},E)$. In particular, $\mathbf{f}\in\mathcal{S}'_{\mathbb{P}_{d}}(\mathbb{R}^{n},E)$.
\end{corollary}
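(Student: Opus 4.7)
The plan is to sharpen the polynomial $\mathbf{P}$ supplied by Corollary \ref{wnwcec1} by subtracting off its high-degree part, which I will show is already $E$-valued. Corollary \ref{wnwcec1} combined with \eqref{wnwceeq10} of Theorem \ref{wnwceth1} gives an $X$-valued polynomial $\mathbf{P}(t)=\sum_{|m|\leq N}t^m\mathbf{w}_m$ with $\mathbf{f}-\mathbf{P}\in\mathcal{S}'(\mathbb{R}^n,E)$ and with $P^{\varphi}_{q}(\partial/\partial t)\mathbf{P}$ being $E$-valued for every $q\in\mathbb{N}$. Split $\mathbf{P}=\mathbf{P}_{<d}+\mathbf{P}_{\geq d}$ into its parts of degree $<d$ and $\geq d$; the goal will be to show $\mathbf{w}_m\in E$ whenever $|m|\geq d$.

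For such an $m$, the monomial $t^m$ belongs to $\mathbb{P}_d$ and hence, by hypothesis, to the ideal generated by $P_{0}^{\varphi},\dots,P_{d}^{\varphi}$; accordingly there are scalar polynomials $R_{m,0},\dots,R_{m,d}\in\mathbb{C}[t_1,\dots,t_n]$ with
$$
t^m=\sum_{q=0}^{d}R_{m,q}(t)\,P^{\varphi}_{q}(t).
$$
Since polynomial multiplication corresponds to composition of constant-coefficient differential operators, the same identity holds with $t$ replaced by $\partial/\partial t$. Applying both sides to the $X$-valued polynomial $\mathbf{P}$ and evaluating at $t=0$ yields
$$
m!\,\mathbf{w}_m=\partial^m\mathbf{P}(0)=\sum_{q=0}^{d}R_{m,q}(\partial/\partial t)\bigl[P^{\varphi}_{q}(\partial/\partial t)\mathbf{P}\bigr](0),
$$
which lies in $E$ because each $P^{\varphi}_{q}(\partial/\partial t)\mathbf{P}$ is an $E$-valued polynomial and differentiation preserves this. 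Hence $\mathbf{P}_{\geq d}$ is $E$-valued, and
$$
\mathbf{f}-\mathbf{P}_{<d}=(\mathbf{f}-\mathbf{P})+\mathbf{P}_{\geq d}\in\mathcal{S}'(\mathbb{R}^n,E),
$$
with $\deg \mathbf{P}_{<d}\leq d-1$, establishing the first claim.

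For the \emph{in particular} statement, I unpack the definition of $\mathcal{S}_{\mathbb{P}_d}(\mathbb{R}^n)$: a Schwartz function $\phi$ belongs to it exactly when every Taylor polynomial of $\hat\phi$ at the origin lies in $\mathbb{P}_d$, that is, when $\hat\phi^{(m)}(0)=0$ for all $|m|<d$, equivalently $\int_{\mathbb{R}^n} Q(t)\phi(t)\,\mathrm{d}t=0$ for every scalar polynomial $Q$ of degree at most $d-1$. Since $\mathbf{P}_{<d}$ is such a polynomial (with coefficients in $X$), $\langle\mathbf{P}_{<d},\phi\rangle=0$ for every $\phi\in\mathcal{S}_{\mathbb{P}_d}(\mathbb{R}^n)$, whence $\langle\mathbf{f},\phi\rangle=\langle\mathbf{f}-\mathbf{P}_{<d},\phi\rangle\in E$, giving $\mathbf{f}\in\mathcal{S}'_{\mathbb{P}_d}(\mathbb{R}^n,E)$.

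The only real obstacle is the algebraic bookkeeping that turns a polynomial identity in $\mathbb{C}[t_1,\dots,t_n]$ into an equality of constant-coefficient differential operators and then into an identity for the Taylor coefficients of $\mathbf{P}$; this is routine because the symbol calculus is a commutative ring isomorphism, so no analytic difficulties should appear.
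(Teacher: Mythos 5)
Your argument is correct and follows essentially the same route as the paper: obtain $\mathbf{P}$ from Corollary \ref{wnwcec1}, use \eqref{wnwceeq10} to see that $P^{\varphi}_{q}(\partial/\partial t)\mathbf{P}$ is $E$-valued, and then for $|m|\geq d$ exploit the ideal hypothesis to express $\partial^{m}$ in terms of the operators $P^{\varphi}_{q}(\partial/\partial t)$ so that the coefficient $\mathbf{w}_{m}$ lands in $E$. The paper phrases this as applying $Q(\partial/\partial t)$ for $Q\in I_{\varphi}$ to the polynomial $\tilde{\mathbf{P}}$; you spell out the same computation explicitly by factoring $t^{m}=\sum_{q}R_{m,q}P^{\varphi}_{q}$, which is a cosmetic difference only.
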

\begin{proof}
Corollary \ref{wnwcec1} yields the existence of an $X$-valued
polynomial $\tilde{\mathbf{P}}(t)=\mathbf{P}(t)+
\sum_{d\leq\left|m\right|\leq N}\mathbf{w}_{m}t^{m}$ such that
$\mathbf{f}-\tilde{\mathbf{P}}\in \mathcal{S}'(\mathbb{R}^{n},E)$
and $\mathbf{P}$ has degree at most $d-1$. Then, we must show that
$\mathbf{w}_{m}\in E$ for $d\leq\left|m\right|\leq N.$ But
Corollary \ref{wnwcec1} also implies that
$Q(\partial/\partial t)\tilde{\mathbf{P}}$ is an
$E$-valued polynomial for any $
Q\in I_{\varphi}$, and since
$\mathbb{P}_{d}(\mathbb{R}^{n})\subseteq I_{\varphi}$, we obtain
at once that $\mathbf{w}_{m}=m!((\partial^{|m|}/\partial
t^{m})\tilde{\mathbf{P}})(0)\in E,$ for 
$d\leq\left|m\right|\leq N.$
\end{proof}

In general, it is not possible to replace the $X$-valued entire function $\mathbf{G}$ by an
$X$-valued polynomial in Theorem \ref{wnwceth1}. However, we know
some valuable information about $\hat{\mathbf{G}}$. Since it is
supported by the origin, we have already observed that
$$\hat{\mathbf{G}}=\sum_{m\in\mathbb{N}^{n}}\frac{(-1)^{\left|m\right|}\delta^{(m)}}{m!}\mu_{m}(\hat{\mathbf{G}})
,$$ where the vectors $\mu_{m}(\hat{\mathbf{G}})=\left\langle
\hat{\mathbf{G}}(u),u^{m}\right\rangle\in X
$
are actually its moments
and the series is convergent in $\mathcal{S}'(\mathbb{R}^{n},X)$.
This series is ``weakly finite'', in the sense that for each
$\mathbf{w}^{\ast}\in X'$ there exists
$N_{\mathbf{w}^{\ast}}\in\mathbb{N}$ such that
$$
\langle \mathbf{w}^{\ast},\langle
\hat{\mathbf{G}},\rho\rangle\rangle=\sum_{\left|m\right|\leq
N_{\mathbf{w}^{\ast}}}\frac{\rho^{(m)}(0)}{m!}\left\langle
\mathbf{w}^{\ast}, \mu_{m}(\hat{\mathbf{G}})\right\rangle, \ \ \
\mbox{for all } \rho\in\mathcal{S}(\mathbb{R}^{n}).
$$
Furthermore, given any continuous seminorm $\mathfrak{p}$ on $X$,
one can find an $N_{\mathfrak{p}}\in\mathbb{N}$ such that
$$
\mathfrak{p}\left(\langle\hat{\mathbf{G}},\rho\rangle-\sum_{\left|m\right|\leq
N_{\mathfrak{p}}}\frac{\rho^{(m)}(0)}{m!}\mu_{m}(\hat{\mathbf{G}})\right)=0,
$$
for all $\rho\in\mathcal{S}(\mathbb{R}^{n})$. Finally, as we have also already mentioned, its inverse Fourier transform $\mathbf{G}$ can be naturally
identified with an entire $X$-valued function.
\begin{example} \label{wnwceex1} We consider $X=C(\mathbb{R})$ and $E=C_{b}(\mathbb{R})$,
 the space of continuous bounded functions. Let
 $\chi_{q}\in C(\mathbb{R})$ be non-trivial such that
 $\operatorname*{supp }\chi_{q}\subset(q,q+1)$, $q\in\mathbb{N}$. Furthermore, for each $q\in\mathbb{N}$ find a harmonic homogeneous polynomial $Q_{q}$ of degree $q$, i.e., $\Delta Q_{q}=0$. Consider the $E$-valued distribution
$$
\mathbf{G}(t,\xi)=\sum_{q=0}^{\infty}Q_{q}(t)\chi_{\nu}(\xi)\in
\mathcal{S}'(\mathbb{R}^n_{t}, C(\mathbb{R}_{\xi}))\setminus
\mathcal{S}'(\mathbb{R}^n_{t}, C_{b}(\mathbb{R}_{\xi})) .
$$
Its Fourier transform is given by an infinite multipole series
supported at the origin, i.e.,
$$
\hat{\mathbf{G}}(u,\xi)=(2\pi)^n\sum_{q=0}^{\infty}
\left(Q_{q}\left(i\partial/\partial u\right)\delta\right)(u)\:\chi_{\nu}(\xi).
$$
Let $\mathbf{h}\in \mathcal{S}'(\mathbb{R}^{n}, C_{b}(\mathbb{R}))$ and
let $\varphi\in\mathcal{S}(\mathbb{R}^{n})\setminus\mathcal{S}_{0}(\mathbb{R}^{n})$ be a non-degenerate test function such that its Fourier transform satisfies $\hat{\varphi}(u)=\left|u\right|^2+O(\left|u\right|^N)$ as $u\to0$, for all $N>2$. If 
$
\mathbf{f}=\mathbf{h}+\mathbf{G}\in
\mathcal{S}'(\mathbb{R}^n, C(\mathbb{R})),
$
then $M_{\varphi}^{\mathbf{f}}(x,y)=M_{\varphi}^{\mathbf{h}}(x,y)$ for all $(x,y)\in\mathbb{H}^{n+1}$. Thus,
$\mathbf{f}$ satisfies all the hypotheses of Theorem
\ref{wnwceth1}; however, there is no $C(\mathbb{R})$-valued
polynomial $\mathbf{P}$ such that $\mathbf{f-P}\in
\mathcal{S}'(\mathbb{R}^{n}, C_{b}(\mathbb{R}))$.
\end{example}

\bigskip

The occurrence of the correction term $\mathbf{G}$ in Theorem \ref{wnwceth1} can be eliminated if one augments the hypotheses by involving a convolution average of $\mathbf{f}$ with respect to another test function as follows. One then obtains a characterization of $\mathcal{S}'(\mathbb{R}^{n}, E)$.

\begin{theorem}
\label{wnwceth2} Let $\mathbf{f}\in\mathcal{S}'(\mathbb{R}^{n},X)$, let $\varphi\in\mathcal{S}(\mathbb{R}^{n})$ be non-degenerate, and let $\varphi_{0}\in\mathcal{S}(\mathbb{R}^{n})$ be such that $\int_{\mathbb{R}^{n}} \varphi_{0}(t)\mathrm{d}t\neq 0$. Then, 
$\mathbf{f}\in\mathcal{S}'(\mathbb{R}^{n},E)$ if and only if the following three conditions hold:
\begin{enumerate}
\item[(i)] $M_{\varphi}^{\mathbf{f}}(x,y)$ takes values in $E$ for almost
all $(x,y)\in\mathbb{H}^{n+1}$ and is measurable as an $E$-valued function.
\item[(ii)] There exist $k,l\in\mathbb{N}$  such that \eqref{global ce eq} holds.
\item [(iii)] There is $l\in\mathbb{N}$ such that $(\mathbf{f}\ast \varphi_{0})(x)\in E$ a.e., it is measurable, and satisfies
\begin{equation}
\label{eq1exTce}
\int_{\mathbb{R}^{n}}(1+|x|)^{-l}
\|(\mathbf{f}\ast \varphi_{0})(x)\|<\infty.\end{equation}
\end{enumerate}
\end{theorem}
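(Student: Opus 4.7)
The necessity follows immediately from Proposition \ref{wnwp2}: if $\mathbf{f}\in\mathcal{S}'(\mathbb{R}^{n},E)$, then both $M^{\mathbf{f}}_{\varphi}$ and $\mathbf{f}\ast\varphi_{0}$ are $E$-valued functions of slow growth, so (i)--(iii) are automatic. I therefore concentrate on sufficiency, and my plan is to reduce matters to Theorem \ref{wnwceth1} and then exploit hypothesis (iii) to eliminate its correction term via a Fourier-analytic inversion argument.

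Applying Theorem \ref{wnwceth1} to conditions (i) and (ii) produces a distribution $\mathbf{G}\in\mathcal{S}'(\mathbb{R}^{n},X)$ with $\operatorname*{supp}\hat{\mathbf{G}}\subseteq\{0\}$ and $\mathbf{f}-\mathbf{G}\in\mathcal{S}'(\mathbb{R}^{n},E)$. Since $\mathbf{f}=(\mathbf{f}-\mathbf{G})+\mathbf{G}$, it suffices to show that $\mathbf{G}\in\mathcal{S}'(\mathbb{R}^{n},E)$. I first check the intermediate claim that $\mathbf{G}\ast\varphi_{0}\in\mathcal{S}'(\mathbb{R}^{n},E)$: condition (iii), combined with the standard identification of polynomially bounded $E$-valued measurable functions with elements of $\mathcal{S}'(\mathbb{R}^{n},E)$, gives $\mathbf{f}\ast\varphi_{0}\in\mathcal{S}'(\mathbb{R}^{n},E)$, while $(\mathbf{f}-\mathbf{G})\ast\varphi_{0}\in\mathcal{S}'(\mathbb{R}^{n},E)$ follows because $\mathcal{S}'(\mathbb{R}^{n},E)$ is stable under convolution with Schwartz functions. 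Subtracting, the $X$-valued identity $\mathbf{G}\ast\varphi_{0}=\mathbf{f}\ast\varphi_{0}-(\mathbf{f}-\mathbf{G})\ast\varphi_{0}$ exhibits $\mathbf{G}\ast\varphi_{0}$ as an element of $\mathcal{S}'(\mathbb{R}^{n},E)$.

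To recover $\mathbf{G}$ itself from $\mathbf{G}\ast\varphi_{0}$ I use that $\hat{\varphi}_{0}(0)=\int_{\mathbb{R}^{n}}\varphi_{0}(t)\mathrm{d}t\neq 0$, so by continuity there is an open neighborhood $U$ of the origin on which $\hat{\varphi}_{0}$ is nowhere zero. Pick $\chi\in\mathcal{D}(U)$ with $\chi\equiv 1$ in a neighborhood of $0$, and set $\chi_{1}:=\mathcal{F}^{-1}(\chi/\hat{\varphi}_{0})\in\mathcal{S}(\mathbb{R}^{n})$ (its Fourier transform lies in $\mathcal{D}(\mathbb{R}^{n})$). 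Since $\operatorname*{supp}\hat{\mathbf{G}}\subseteq\{0\}$ and $\chi\equiv 1$ near $0$, one has $\chi\hat{\mathbf{G}}=\hat{\mathbf{G}}$ in $\mathcal{S}'(\mathbb{R}^{n},X)$, whence
\begin{equation*}
\widehat{(\mathbf{G}\ast\varphi_{0})\ast\chi_{1}}=\hat{\mathbf{G}}\,\hat{\varphi}_{0}\,\hat{\chi}_{1}=\chi\,\hat{\mathbf{G}}=\hat{\mathbf{G}},
\end{equation*}
so that $\mathbf{G}=(\mathbf{G}\ast\varphi_{0})\ast\chi_{1}$. The right-hand side is the convolution of an element of $\mathcal{S}'(\mathbb{R}^{n},E)$ with a Schwartz test function, hence it belongs to $\mathcal{S}'(\mathbb{R}^{n},E)$.

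The only delicate point is the final division step: one needs $\hat{\varphi}_{0}$ to be non-vanishing on a neighborhood of $\operatorname*{supp}\hat{\mathbf{G}}$, and this is exactly what the hypothesis $\int\varphi_{0}\neq 0$ provides. Everything else is routine bookkeeping between $\mathcal{S}'(\mathbb{R}^{n},X)$ and $\mathcal{S}'(\mathbb{R}^{n},E)$ through the continuous inclusion $E\hookrightarrow X$, together with the elementary fact that Schwartz convolution preserves the Banach-space-valued tempered class.
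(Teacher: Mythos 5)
Your proposal is correct and follows essentially the same route as the paper's proof: apply Theorem \ref{wnwceth1} to get the correction term $\mathbf{G}$ with $\operatorname*{supp}\hat{\mathbf{G}}\subseteq\{0\}$, use (iii) to conclude $\mathbf{G}\ast\varphi_{0}\in\mathcal{S}'(\mathbb{R}^{n},E)$, and then ``divide'' by $\hat{\varphi}_{0}$ near the origin, which is legitimate because $\hat{\varphi}_{0}(0)\neq 0$. The only difference is cosmetic: you encapsulate the division step as the deconvolution identity $\mathbf{G}=(\mathbf{G}\ast\varphi_{0})\ast\chi_{1}$ with $\hat{\chi}_{1}=\chi/\hat{\varphi}_{0}$, whereas the paper tests $\hat{\mathbf{G}}$ against $\hat{\chi}\cdot\hat{\varphi}_{0}$ for $\hat{\chi}=\rho/\hat{\varphi}_{0}$, $\rho\in\mathcal{D}(B(0,\sigma))$, and evaluates the resulting pairing as a Bochner integral of $\mathbf{G}\ast\varphi_{0}$.
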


\begin{proof} We apply Theorem \ref{wnwceth1} to obtain $\mathbf{G}\in \mathcal{S}'(\mathbb{R}^{n}, X)$ such that $\mathbf{f}-\mathbf{G}\in \mathcal{S}'(\mathbb{R}^{n}, E)$ and $\operatorname*{supp}\hat{\mathbf{G}}\subset \{0\}$. Using (iii), we conclude that, a.e., $(\mathbf{G}\ast \varphi_{0})(x)\in E$ and that this function defines an element of $\mathcal{S}'(\mathbb{R}^{n},E)$ as a tempered $E$-valued function. Let $\sigma>0$ be sufficiently small such that $|\hat{\varphi}(u)|>0$ for $|u|<\sigma$. Since $\operatorname*{supp}\hat{\mathbf{G}}\subset \{0\}$, it suffices to show that $\langle \mathbf{G}, \hat{\rho} \rangle\in E $ for each $\rho \in \mathcal{D}(B(0,\sigma))$, whence we would have $\mathbf{G}\in \mathcal{S}'(\mathbb{R}^{n}, E)$. Now, setting $\hat{\chi}= \rho/\hat{\varphi}_{0}$,
\begin{align*}
\langle \mathbf{G}, \hat{\rho} \rangle&=\langle \hat{\mathbf{G}}, \hat{\chi} \cdot \hat{\varphi}_{0} \rangle
\\
&
=(2\pi)^{n}\left\langle \mathbf{G}(t),
\int_{\mathbb{R}^{n}}\chi(-\xi)\varphi_{0}\left(\xi-t\right)
\mathrm{d}\xi\right\rangle 
\\
&
=(2\pi)^{n}\int_{\mathbb{R}^{n}}\chi(-\xi) (\mathbf{G}\ast \varphi_{0})(\xi)\mathrm{d}\xi \in E,
\end{align*}
where the exchange with the integral sign can be established as in
the proof of Proposition \ref{wnwcep1} and the very last integral is taken in the Bochner sense. 
\end{proof}

It should be noticed that if $\varphi$ is such that $\int_{\mathbb{R}^{n}} \varphi(t)\mathrm{d}t\neq 0$, then one may use $\varphi=\varphi_{0}$ in Theorem \ref{wnwceth2} and the condition (iii) becomes part of (ii); a stronger result is however stated below in Corollary \ref{wnwcephi}.

\subsection{Local class estimates}
\label{lce}

We now proceed to study local class estimates, namely,
(\ref{wnwceeq1}) only assumed to hold for
$(x,y)\in\mathbb{R}^{n}\times(0,1]$. We work again with an integral condition instead of a pointwise bound in order to gain generality. Let us start by pointing out
that $M^{\mathbf{f}}_{\varphi}(x,y)$ may sometimes be trivial for
$y\in(0,1)$, this may happen even if $\varphi$ is non-degenerate:

\begin{example}
\label{wnwceex2} Let $\omega\in\mathbb{S}^{n-1}$,
$r\in\mathbb{R}_{+}$; denote $[0,r\omega]=\left\{\sigma\omega:\:
\sigma\in[0,r]\right\}$. Suppose that $\mathbf{f}\in
\mathcal{S}'(\mathbb{R}^{n},X)$ is such that
$\operatorname*{supp}\:\hat{\mathbf{f}}\subset [0,r\omega]$ and
$\varphi\in \mathcal{S}(\mathbb{R}^{n})$ is any test function
satisfying
$\operatorname*{supp}\hat{\varphi}\subset\mathbb{R}^{n}\setminus
[0,r\omega]$, then
\begin{equation*}
M_{\varphi}^\mathbf{f}(x,y)=\frac{1}{(2\pi)^{n}}\left\langle
\hat{\mathbf{f}}(u),e^{ixu}\hat{\varphi}(yu)\right\rangle=0,
\ \ \ \mbox{for all } y\in(0,1).
\end{equation*}
\smallskip
\end{example}

Fortunately, we will show that the only distributions
$\mathbf{f}\in\mathcal{S}'(\mathbb{R}^{n},X)$
that may satisfy a local class estimate, with respect to a
non-degenerate test function, are, modulo elements of $\mathcal{S}'(\mathbb{R}^{n},E)$, those whose Fourier transforms are
compactly supported.

We need to introduce some terminology in order to move further on. We
will make use of weak integrals (Pettis integrals) for $X$-valued functions as
defined, for example, in \cite[p. 77]{rudin}. We say that a
tempered $X$-valued distribution
$\mathbf{g}\in\mathcal{S}'(\mathbb{R}^{n},X)$ is \emph{weakly
regular} if there exists an $X$-valued function
$\tilde{\mathbf{g}}$ such that $\rho \tilde{\mathbf{g}}$ is weakly
integrable over $\mathbb{R}^{n}$ for all
$\rho\in\mathcal{S}(\mathbb{R}^{n})$ and
$$
\left\langle
\mathbf{g},\rho\right\rangle=\int_{\mathbb{R}^{n}}\rho(t)\tilde{\mathbf{g}}(t)\mathrm{d}t
\in X ,
$$
where the last integral is taken in the weak sense. We identify
$\mathbf{g}$ with $\tilde{\mathbf{g}}$, so, as usual, we write
$\mathbf{g}=\tilde{\mathbf{g}}$. 

Let us recall some facts about (vector-valued) compactly supported
distributions. Let
$\mathbf{g}\in\mathcal{S}'(\mathbb{R}^{n},X)$ have support in
$\overline{B(0,r)}$, the closed ball of radius $r$. Then, the
following version of the Schwartz-Paley-Wiener theorem holds:
$$\mathbf{G}(z)=\left\langle \mathbf{g}(u),e^{-iz\cdot
u}\right\rangle, \ \ \ z\in \mathbb C^n,$$ is an $X$-valued entire
function which defines a weakly regular tempered distribution, and
$\mathbf{G}(\xi)=\hat{\mathbf{g}}(\xi),$ $\xi  \in \mathbb R^n$;
moreover, $\mathbf{G}$ is of weakly exponential type, i.e., for
all $\mathbf{w}^{\ast}\in X'$ one can find constants
$C_{\mathbf{w}^{\ast}}>0$ and $N_{\mathbf{w}^{\ast}}\in\mathbb{N}$
with
\begin{equation}
\label{wnwceeq11} \left|\left\langle
\mathbf{w}^{\ast},\mathbf{G}(z)\right\rangle\right|\leq
C_{\mathbf{w}^{\ast}}(1+\left|z\right|)^{N_{\mathbf{w}^{\ast}}}e^{r\left|\Im
m\:z\right|}, \ \ \ z\in\mathbb{C}^{n}.
\end{equation}
Conversely, if $\mathbf{G}$ is an $X$-valued entire function which
defines a weakly regular  tempered distribution and for all
$w^*\in X'$ there exist $C_{w^*}>0$ and $ N_{w^*}\in\mathbb N$ such
that (\ref{wnwceeq11}) holds, then
$\hat{\mathbf{G}}=\mathbf{g}
,$ where
${\mathbf{g}}\in\mathcal{S}'(\mathbb R^n,X)$ and
$\operatorname*{supp}{\mathbf{g}}\subseteq \overline{B(0,r)}$.

The following concept for non-degenerate test functions is of much
relevance for the problem under consideration.

\begin{definition}
\label{wnwced1} Let $\varphi\in\mathcal{S}(\mathbb{R}^{n})$ be
non-degenerate. Given $\omega\in\mathbb{S}^{n-1}$, consider the
function of one variable $R_{\omega}(r)=\hat{\varphi}(r\omega)\in
C^{\infty}[0,\infty)$. We define the index of non-degenerateness
of $\varphi$ as the (finite) number
$$
\tau=\inf\left\{r\in\mathbb{R}_{+}:\:\operatorname*{supp}
R_{\omega}\cap[0,r]\neq \emptyset,
\forall\omega\in\mathbb{S}^{n-1}\right\} .
$$
\end{definition}

\bigskip

We are ready to state and prove the main Tauberian result of this
subsection. We shall consider slightly more general norm
estimates for the regularizing transform
$M^{\mathbf{f}}_{\varphi}$ in terms of functions
$\Psi:\mathbb{R}^{n}\times(0,1]\to\mathbb{R}_{+}$ which satisfy,
for some constants $C_1,C_2>0$ and $k,l\in\mathbb{N}$,
\begin{equation}
\label{wnweqgb}
\Psi(0,y)\geq C_1y^{k}\ \ \mbox{ and }  \ \ \Psi(x+\xi,y)\leq C_2 \Psi(x,y)(1+\left|\xi\right|)^{l},
\end{equation}
for all $x,\xi\in\mathbb{R}^{n}$ and $y\in(0,1]$. In the next theorem $L^{p,p'}_{\Psi}((0,1]\times\mathbb{R}^{n},E)$ stands for the mixed $L^{p,p'}$-space of $E$-valued functions on $(0,1]\times\mathbb{R}^{n}$ with respect to the weight $\Psi$, that is, the space of $E$-valued measurable functions $\mathbf{F}$ such that 
$$
 \int_{0}^{1}\left(\int_{\mathbb{R}^{n}}\left(\|\mathbf{F}(x,y)\|\Psi(x,y)\right)^{p}\mathrm{d}x\right)^{\frac{p'}{p}} \frac{\mathrm{d}y}{y}<\infty
$$
The parameters are assumed to satisfy $p,p'\in [1,\infty]$. 
\begin{theorem}
\label{wnwceth3} Let $\mathbf{f}\in\mathcal{S}'(\mathbb{R}^{n},X)$
and let $\varphi\in\mathcal{S}(\mathbb{R}^{n})$ be a
non-degenerate test function with index of non-degenerateness
$\tau$. Assume:
\begin{enumerate}
\item[(i)] $M_{\varphi}^\mathbf{f}(x,y)$ takes values in $E$ for almost all $(x,y)\in\mathbb{R}^{n}\times(0,1]$ and is measurable as an $E$-valued function on $\mathbb{R}^{n}\times(0,1]$.
\item[(ii)] There is a function $\Psi:\mathbb{R}^{n}\times(0,1]$ that satisfies \eqref{wnweqgb} and such that $M_{\varphi}^{\mathbf{f}}\in L^{p,p'}_{\Psi}((0,1]\times \mathbb{R}^{n}, E).$
\end{enumerate}
Then, for any $r>\tau$, there exists an $X$-valued entire function
$\mathbf{G}$, which defines a weakly regular tempered $X$-valued distribution
and satisfies \eqref{wnwceeq11}, such that
$$\mathbf{f-G}\in\mathcal{S}'(\mathbb{R}^{n},E).$$
Furthermore, $M^{\mathbf{f-G}}_{\varphi}\in L^{p,p'}_{\Psi}((0,1]\times \mathbb{R}^{n}, E)
$
and we can choose $\mathbf{G}$ so that $\hat{\mathbf{G}}=\chi\hat{\mathbf{f}}$, where $\chi\in\mathcal{D}(\mathbb{R}^{n})$ is an arbitrary test function that satisfies $\chi(t)=1$ for $\left|t\right|\leq \tau$ and has support contained in the ball of radius $r$ and center at the origin.
\end{theorem}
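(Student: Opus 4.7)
I will split $\mathbf{f}=\mathbf{G}+\mathbf{h}$ via the spectral cutoff $\chi$ and then prove $\mathbf{h}\in\mathcal{S}'(\mathbb{R}^{n},E)$ by a wavelet desingularization argument using a reconstruction wavelet whose Fourier transform is compactly supported just large enough to reach where $\hat\varphi$ stops vanishing on every ray. The whole point of this construction is that it collapses the desingularization integral to the $y\in (0,1]$ regime where hypothesis (ii) applies.

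\textbf{Setup and Abelian step.} Define $\mathbf{G}=\mathcal{F}^{-1}(\chi\hat{\mathbf{f}})\in\mathcal{S}'(\mathbb{R}^n,X)$. Since $\chi\hat{\mathbf{f}}$ is an $X$-valued distribution compactly supported in $\overline{B(0,r)}$, the vector-valued Paley--Wiener theorem recalled in Section~\ref{lce} immediately gives that $\mathbf{G}$ is an entire $X$-valued function which defines a weakly regular tempered distribution satisfying~\eqref{wnwceeq11}. For $\mathbf{h}=\mathbf{f}-\mathbf{G}$, $\hat{\mathbf{h}}=(1-\chi)\hat{\mathbf{f}}$ has support in $\{|u|\geq\tau\}$, and the identity
\[
M^{\mathbf{h}}_{\varphi}(\,\cdot\,,y)=M^{\mathbf{f}}_{\varphi}(\,\cdot\,,y)-\Psi_{\chi}\ast M^{\mathbf{f}}_{\varphi}(\,\cdot\,,y),\qquad \Psi_{\chi}=\mathcal{F}^{-1}\chi\in\mathcal{S}(\mathbb{R}^{n}),
\]
combined with Young's inequality and the weight inequality~\eqref{wnweqgb}, transfers hypotheses (i)--(ii) from $\mathbf{f}$ to $\mathbf{h}$; this also establishes the final claim $M^{\mathbf{f}-\mathbf{G}}_{\varphi}\in L^{p,p'}_{\Psi}((0,1]\times\mathbb{R}^{n},E)$ of the theorem.

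\textbf{Reduction to Lizorkin and wavelet setup.} To show $\mathbf{h}\in\mathcal{S}'(\mathbb{R}^n,E)$ I will first reduce to $\mathbf{h}|_{\mathcal{S}_{0}(\mathbb{R}^{n})}\in\mathcal{S}_{0}'(\mathbb{R}^{n},E)$. Arranging $\chi\equiv 1$ on $\overline{B(0,\tau_{1})}$ for some $\tau_{1}\in(\tau,r)$ makes $\operatorname{supp}\hat{\mathbf{h}}\subset\{|u|\geq\tau_{1}\}$; then for $\rho\in\mathcal{S}$, a cutoff $\mu\in\mathcal{D}(B(0,\tau_{1}))$ with $\mu\equiv 1$ near the origin splits $\mathcal{F}^{-1}\rho=\mu\,\mathcal{F}^{-1}\rho+(1-\mu)\,\mathcal{F}^{-1}\rho$; the first summand pairs to zero with $\hat{\mathbf{h}}$ by support incompatibility, while the second has Fourier transform in $\mathcal{S}_{0}$. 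For the Lizorkin step, take (as in the proof of Theorem~\ref{wnwceth1}) the non-degenerate wavelet $\psi=\check{\bar\varphi}\ast\psi_{1}$ with $\hat\psi_{1}(u)=e^{-|u|-1/|u|}$, so that
\[
\mathcal{W}_{\psi}\mathbf{h}(x,y)=\int_{\mathbb{R}^{n}}M^{\mathbf{h}}_{\varphi}(x+yu,y)\,\bar\psi_{1}(u)\,du\in E
\]
as a Bochner integral for a.e.\ $(x,y)\in\mathbb{R}^{n}\times(0,1]$, and construct a reconstruction wavelet $\eta\in\mathcal{S}_{0}$ with the \emph{extra} property $\operatorname{supp}\hat\eta\subset\overline{B(0,\tau_{1})}$ via the truncated Calder\'on recipe
\[
\hat\eta(u)=\frac{\tilde\chi(u)\,\overline{\hat\psi(u)}}{c^{\tilde\chi}_{\psi}(u/|u|)},\qquad c^{\tilde\chi}_{\psi}(\omega)=\int_{0}^{\tau_{1}}\tilde\chi(r\omega)\,|\hat\psi(r\omega)|^{2}\,\frac{dr}{r},
\]
with $\tilde\chi\in\mathcal{D}(B(0,\tau_{1}))$ nonnegative and suitably positive. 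The denominator is strictly positive on $\mathbb{S}^{n-1}$ precisely because $\tau_{1}>\tau$ ensures, by the very definition of the index of non-degenerateness, that $\hat\varphi$ (and hence $\hat\psi$) is not identically zero on any ray-segment $[0,\tau_{1}]\omega$.

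\textbf{Fourier-support miracle and main obstacle.} With this $\eta$, for every $y>1$ the sets $y^{-1}\operatorname{supp}\hat\eta\subset\overline{B(0,\tau_{1}/y)}\subsetneq B(0,\tau_{1})$ and $\operatorname{supp}\hat{\mathbf{h}}\subset\{|u|\geq\tau_{1}\}$ are disjoint; a weak-$\ast$ reduction to scalar Plancherel then gives $\int_{\mathbb{R}^{n}}\mathcal{W}_{\psi}\mathbf{h}(x,y)\,\mathcal{W}_{\bar\eta}\rho(x,y)\,dx=0$ for every $y>1$. Hence the desingularization formula~\eqref{wnwneq12} collapses to the $y\in(0,1]$ regime, where the integrand is $E$-valued; absolute convergence of the resulting Bochner integral in $E$ follows by pairing the $L^{p,p'}_{\Psi}$ bound on $\mathcal{W}_{\psi}\mathbf{h}$ (inherited from that on $M^{\mathbf{h}}_{\varphi}$) with the rapid decay of $\mathcal{W}_{\bar\eta}\rho\in\mathcal{S}(\mathbb{H}^{n+1})$, using H\"older's inequality and~\eqref{wnweqgb}. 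The main obstacle is the construction of $\eta$ with the tight Fourier-support constraint in $\overline{B(0,\tau_{1})}$: the strict inequalities $\tau<\tau_{1}<r$, made possible by the hypothesis $r>\tau$, are essential to sidestep degeneracies of $\hat\varphi$ that can persist right up to the critical sphere $|u|=\tau$. For an arbitrary admissible $\chi$ (with $\chi\equiv 1$ only on $\overline{B(0,\tau)}$), comparison with a convenient $\chi^{(0)}$ reduces matters to showing $\mathbf{G}-\mathbf{G}^{(0)}\in\mathcal{S}'(\mathbb{R}^{n},E)$; this difference satisfies the theorem's hypotheses through $M^{\mathbf{G}-\mathbf{G}^{(0)}}_{\varphi}=\mathcal{F}^{-1}(\chi^{(0)}-\chi)\ast M^{\mathbf{f}}_{\varphi}$ and is handled by a further application of the same scheme with an auxiliary parameter chosen in $(\tau,\tau_{1})$.
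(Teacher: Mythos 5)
Your proposal is correct in substance and rests on exactly the same mechanism as the paper's proof: construct a reconstruction wavelet $\eta\in\mathcal{S}_{0}(\mathbb{R}^{n})$ whose Fourier support is confined to a ball of radius $\tau_{1}\in(\tau,r)$, so that by Fourier-support disjointness the $y>1$ part of the Calder\'on/desingularization pairing vanishes and everything localizes to the regime $y\in(0,1]$ where hypotheses (i)--(ii) apply. The differences are mainly a matter of packaging. You detour through the auxiliary wavelet $\psi=\check{\bar{\varphi}}\ast\psi_{1}$ imported from the proof of Theorem~\ref{wnwceth1}, apply the $\mathcal{S}'_{0}$-desingularization formula~\eqref{wnwneq12} to $\mathbf{h}=\mathbf{f}-\mathbf{G}$, and then run a separate Lizorkin-to-$\mathcal{S}'$ promotion using the Fourier support of $\hat{\mathbf{h}}$. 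The paper avoids both detours: it builds $\eta$ directly as a reconstruction wavelet for $\varphi$ itself (via the asymmetric Calder\'on constant $\int_{0}^{\infty}\hat\varphi(r\omega)\hat\eta(r\omega)\,\mathrm{d}r/r=1$, which is legitimate even though $\varphi\notin\mathcal{S}_{0}$ since $\eta\in\mathcal{S}_{0}$ makes the integral converge), and restricts to $\rho$ with $\operatorname{supp}\hat\rho\subseteq\mathbb{R}^{n}\setminus B(0,r_{1})$, so that $\langle\mathbf{f},\rho\rangle$ is read off at once as a Bochner integral over $y\in(0,1]$ and the membership $\hat{\chi}_{1}\ast\mathbf{f}\in\mathcal{S}'(\mathbb{R}^{n},E)$ follows directly without first splitting off $\mathbf{G}$. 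Two small corrections to your writeup: in the truncated Calder\'on recipe the numerator should carry $\hat\psi(u)$, not $\overline{\hat\psi(u)}$, if the denominator is $\int_{0}^{\tau_{1}}\tilde\chi(r\omega)|\hat\psi(r\omega)|^{2}\,\mathrm{d}r/r$, so that $c_{\psi,\eta}\equiv 1$ as required; and your closing sketch for handling an \emph{arbitrary} admissible $\chi$ (reducing to $\mathbf{G}-\mathbf{G}^{(0)}\in\mathcal{S}'(\mathbb{R}^{n},E)$ via ``a further application of the same scheme'') is not really carried out --- note that the paper's own proof also only constructs a particular $\chi$ equal to $1$ on $B(0,r_{1})$ with $r_{1}>\tau$, so you are no worse off, but you should not present this reduction as if it were routine.
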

\begin{proof}
Let $r_{1}$ be such that $\tau<r_{1}<r$. It is easy to find a reconstruction wavelet $\eta\in\mathcal{S}_{0}(\mathbb{R}^{n})$ for $\varphi$,
in the sense that
$$
1=\int^{\infty}_{0}\hat{\varphi}(r\omega)\hat{\eta}(r\omega)\frac{\mathrm{d}r}{r}, \ \ \ \mbox{for every }\omega\in\mathbb{S}^{n-1},
$$
with the
property $\operatorname*{supp}\hat{\eta}\subset B(0,r_{1})$. Indeed, if we choose a non-negative $\kappa\in\mathcal{D}(\mathbb{R}^{n})$ with support in $B(0,r_{1})\setminus\left\{0\right\}$ and being equal to $1$ in a neighborhood of the sphere $\tau \mathbb{S}^{n-1}=\left\{u\in\mathbb{R}^{n}: \left|u\right|=\tau\right\}$, then 
the same argument given in the proof of \cite[Prop. 5.1, p. 11]{Pil-Vin2014} shows that 
$$
\hat{\eta}(x)= \frac{\kappa(x)\overline{\hat{\varphi}(x)}}
{\displaystyle\int^{\infty}_{0}\kappa(rx)\left|\hat{\varphi} (rx)\right| ^{2}\frac{\mathrm{d}r}{r}
}
$$
fulfills the requirements. The usual calculation \cite[p. 66]{holschneider} is valid and so, for $\rho\in\mathcal{S}_{0}(\mathbb{R}^{n})$,

$$\rho(t)=\int_{0}^{\infty}\int_{\mathbb{R}^{n}}
\frac{1}{y^{n}}\varphi\left(\frac{x-t}{y}\right)\mathcal{W}_{\bar{\eta}}
\rho(x,y)\frac{\mathrm{d}x\mathrm{d}y}{y}\:.$$
Observe now that if
$\operatorname*{supp}\hat{\rho}\subseteq\mathbb{R}^{n}\setminus
B(0,r_{1})$, then
$$
\mathcal{W}_{\bar{\eta}}\rho(x,y)=\frac{1}{(2\pi)^{n}}\int_{\mathbb{R}^{n}}e^{ix
\cdot u}\hat{\rho}(u)\hat{\eta}(-yu)\mathrm{d}u=0 , \ \ \
\mbox{for all } y\in[1,\infty).
$$
Thus, the same argument employed in Proposition \ref{wnwcep1}
applies to show
\begin{equation}
\label{wnwceeq12}
\left\langle f,\rho \right\rangle=
\int_{0}^{1}\int_{\mathbb{R}^{n}}M_{\varphi}^\mathbf{f}(x,y)\mathcal{W}_{\bar{\eta}}\rho(x,y)\:\frac{\mathrm{d}x\mathrm{d}y}{y}\:
,
\end{equation}
for all $\rho\in\mathcal{S}(\mathbb{R}^{n})$ with
$\operatorname*{supp}\hat{\rho}\subseteq\mathbb{R}^{n}\setminus
B(0,r_{1}).$ Choose $\chi_{1}\in C^{\infty}(\mathbb{R}^{n})$ such that
$\chi_1(u)=1$ for all $u\in\mathbb{R}^{n}\setminus B(0,r)$ and
$\operatorname*{supp}\chi_1\in\mathbb{R}^{n}\setminus B(0,r_{1})$.
Now, $\hat{\chi}_1\ast\mathbf{f}$ is well defined since
$\hat{\chi}_1\in \mathcal{O}'_{C}(\mathbb{R}^{n})$ (the space of
convolutors), and actually (\ref{wnwceeq12}) and the continuity of $\mathcal{W}_{\bar{\eta}}$ imply that
$(2\pi)^{-n}\hat{\chi}_1\ast\mathbf{f}\in\mathcal{S}'(\mathbb{R}^{n},E)$.
Therefore,
$\mathbf{G}=\mathbf{f}-(2\pi)^{-n}\hat{\chi}_1\ast\mathbf{f}$
satisfies the requirements because $\hat{\mathbf{G}}=\chi\mathbf{\hat{f}}$, where $\chi(\xi)=1-\chi_1(-\xi)$, and so
$\operatorname*{supp}\hat{\mathbf{G}}\subseteq \overline{B(0,r)}$. Since $\hat{\chi}_{1}(\xi)=(2\pi)^{n}\delta(\xi) -\hat{\chi}(-\xi)$ and so
\begin{align*}
M^{\mathbf{f-G}}_{\varphi}(x,y)&=M^{\mathbf{f}}_{\varphi}(x,y)-\frac{1}{(2\pi)^{n}}\left\langle\mathbf{f}(\xi),\left\langle \frac{1}{y^{n}}\varphi\left(\frac{x+t-\xi}{y}\right),\hat{\chi}(t)\right\rangle \right\rangle
\\
&
=M^{\mathbf{f}}_{\varphi}(x,y)-\frac{1}{(2\pi)^{n}}\int_{\mathbb{R}^{n}}M^{\mathbf{f}}_{\varphi}(x+t,y)\hat{\chi}(t)\mathrm{d}t,
\end{align*}
we readily obtain the $L^{p,p'}$-norm estimate for
 $M^{\mathbf{f-G}}_{\varphi}$.
  \end{proof}


One may be tempted to think that in Theorem \ref{wnwceth3} it is possible to take $\mathbf{G}$ with support in $\overline{B(0,\tau)}$; however, this is not true, in general, as the following counterexample shows.
\begin{example}
\label{wnwceex3} Let $X$, $E$, and the sequence
$\left\{\chi_{q}\right\}_{q=1}^{\infty}$ be as in Example
\ref{wnwceex1}. We work in dimension $n=1$. We assume additionally
that $\sup_\xi \left|\chi_{q}(\xi)\right|=1$, for all
$q\in\mathbb{N}$. Let $\tau\geq0$, the wavelet $\psi$, given by
$\hat{\psi}(u)=e^{-\left|u\right|-(1/(\left|u\right|-\tau))}$ for 
$\left|u\right|>\tau$ and  $\hat{\psi}(u)=0$ for 
$\left|u\right|\leq\tau,$ has index of non-degenerateness $\tau$.
Consider the $C(\mathbb{R})$-valued distribution
$$\mathbf{f}(t,\xi)=\sum_{q=1}^{\infty}e^{q+i\left(\tau+\frac{1}{q}\right)t} \chi_{q}(\xi)\in\mathcal{S}'(\mathbb{R}_{t}, C(\mathbb{R}_{\xi}))\setminus
\mathcal{S}'(\mathbb{R}_{t}, C_{b}(\mathbb{R}_{\xi})) .$$
Then,
$$\mathcal{W}_{\psi}\mathbf{f}(x,y)(\xi)=\sum_{1\leq q<\frac{y}{\tau(1-y)}}e^{ q+(ix-y)\left(\tau+\frac{1}{ q}\right) -\frac{ q}{y- q\tau(1-y)}}\chi_{ q}(\xi) , \ \ \ 0<y<1.$$
and hence,
$\left\|\mathcal{W}_{\psi}\mathbf{f}(x,y)\right\|_{C_{b}(\mathbb{R})}\leq
1,$ for all $0<y<1.$ Therefore, the hypotheses of Theorem
\ref{wnwceth3} are fully satisfied, however,
$\mathbf{f}-\mathbf{G}\notin\mathcal{S}'(\mathbb{R},
C_{b}(\mathbb{R}))$, for any
$\mathbf{G}\in\mathcal{S}'(\mathbb{R}, C(\mathbb{R}))$ with
$\operatorname*{supp}\hat{\mathbf{G}}\subseteq[-\tau,\tau]$.
\end{example}

\bigskip

We obtain a local class estimate characterization of $\mathcal{S}'(\mathbb{R}^{n},E)$ if we combine Theorem \ref{wnwceth3} with exactly the same argument employed in the proof of Theorem \ref{wnwceth2}.

\begin{theorem}
\label{wnwceth4} Let $\mathbf{f}\in\mathcal{S}'(\mathbb{R}^{n},X)$, let $\varphi\in\mathcal{S}(\mathbb{R}^{n})$ be non-degenerate with index of non-degenerateness $\tau$, and let $\varphi_{0}\in\mathcal{S}(\mathbb{R}^{n})$ be such that $\hat{\varphi}_{0}(u)\neq 0$ for all $|u|\leq \tau$. Then, 
$\mathbf{f}\in\mathcal{S}'(\mathbb{R}^{n},E)$ if and only if the ensuing three conditions hold:
\begin{enumerate}
\item[(i)] $M_{\varphi}^{\mathbf{f}}(x,y)$ takes values in $E$ for almost
all $(x,y)\in\mathbb{H}^{n+1}$ and is measurable as an $E$-valued function.
\item[(ii)] There exist $k,l\in\mathbb{N}$ such that 
\begin{equation}\label{local ce eq}
\int_{0}^{1} y^{k}(1+|x|)^{-l}\left\|M_{\varphi}^\mathbf{f}(x,y)\right\|\mathrm{d}x\mathrm{d}y<\infty
\end{equation}
\item [(iii)] There is  $l\in\mathbb{N}$ such that $(\mathbf{f}\ast \varphi_{0})(x)\in E$ a.e. is measurable and condition \eqref{eq1exTce} holds.

\end{enumerate}
\end{theorem}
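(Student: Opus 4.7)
The necessity of (i)–(iii) for $\mathbf{f}\in\mathcal{S}'(\mathbb{R}^{n},E)$ is immediate from Proposition \ref{wnwp2}. For the sufficiency my plan is to first apply Theorem \ref{wnwceth3} in order to peel off a correction term $\mathbf{G}$ with $\hat{\mathbf{G}}$ compactly supported and $\mathbf{f}-\mathbf{G}\in\mathcal{S}'(\mathbb{R}^{n},E)$, and then to upgrade $\mathbf{G}$ itself to an $E$-valued distribution by reproducing verbatim the last step in the proof of Theorem \ref{wnwceth2}.

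Since $\hat{\varphi}_{0}$ is continuous and does not vanish on the compact ball $\overline{B(0,\tau)}$, there exists $r>\tau$ with $\hat{\varphi}_{0}(u)\neq 0$ for all $|u|\leq r$. With this $r$, the weight $\Psi(x,y)=y^{k}(1+|x|)^{-l}$ (which satisfies \eqref{wnweqgb} via $1+|x+\xi|\leq (1+|x|)(1+|\xi|)$), and $p=p'=1$, hypothesis (ii) is exactly the $L^{p,p'}_{\Psi}$-integrability condition required by Theorem \ref{wnwceth3}. That theorem then yields an entire $\mathbf{G}\in\mathcal{S}'(\mathbb{R}^{n},X)$ with $\operatorname*{supp}\hat{\mathbf{G}}\subseteq\overline{B(0,r)}$ and $\mathbf{f}-\mathbf{G}\in\mathcal{S}'(\mathbb{R}^{n},E)$.

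It remains to show $\mathbf{G}\in\mathcal{S}'(\mathbb{R}^{n},E)$. Combining (iii) with the fact that $((\mathbf{f}-\mathbf{G})\ast\varphi_{0})(x)$ is a smooth, tempered $E$-valued function (by Proposition \ref{wnwp2}) gives $(\mathbf{G}\ast\varphi_{0})(x)\in E$ for almost every $x$, with tempered growth. I would then replay the computation at the end of the proof of Theorem \ref{wnwceth2}: for every $\rho\in\mathcal{D}(\mathbb{R}^{n})$ supported in $B(0,r)$, the function $\hat{\chi}:=\rho/\hat{\varphi}_{0}$ is a well-defined element of $\mathcal{D}(\mathbb{R}^{n})$, and
\[
\langle\mathbf{G},\hat{\rho}\rangle=\langle\hat{\mathbf{G}},\hat{\chi}\cdot\hat{\varphi}_{0}\rangle=(2\pi)^{n}\int_{\mathbb{R}^{n}}\chi(-\xi)(\mathbf{G}\ast\varphi_{0})(\xi)\,\mathrm{d}\xi\in E.
\]
Since $\operatorname*{supp}\hat{\mathbf{G}}\subseteq\overline{B(0,r)}$, a standard cut-off argument (using some $\alpha\in\mathcal{D}(B(0,r'))$ with $r'>r$ and $\alpha\equiv 1$ on a neighborhood of $\overline{B(0,r)}$) reduces the evaluation of $\langle\mathbf{G},\hat{\phi}\rangle$ at an arbitrary $\phi\in\mathcal{S}(\mathbb{R}^{n})$ to the case just handled, which establishes $\mathbf{G}\in\mathcal{S}'(\mathbb{R}^{n},E)$.

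The only genuinely new point compared with Theorem \ref{wnwceth2} is the matching of the radius produced by Theorem \ref{wnwceth3} with the region on which $\hat{\varphi}_{0}$ is invertible; I expect this to be the main technical obstacle, and it is overcome precisely because the hypothesis requires $\hat{\varphi}_{0}$ to be non-zero on the \emph{closed} ball $\overline{B(0,\tau)}$, which leaves room to choose some $r>\tau$ with the same property.
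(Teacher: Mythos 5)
Your two-step strategy — apply Theorem \ref{wnwceth3} to produce a correction term $\mathbf{G}$ with compactly supported Fourier transform and $\mathbf{f}-\mathbf{G}\in\mathcal{S}'(\mathbb{R}^{n},E)$, then rerun the division-by-$\hat{\varphi}_0$ argument from the proof of Theorem \ref{wnwceth2} to show $\mathbf{G}\in\mathcal{S}'(\mathbb{R}^{n},E)$ — is precisely the route the paper takes, and the identification of the key point (continuity plus compactness gives a little extra room beyond $\overline{B(0,\tau)}$ on which $\hat{\varphi}_0$ is still nonvanishing) is correct.

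There is, however, a slip in the bookkeeping of the radii at the final cut-off step. You choose $r>\tau$ with $\hat{\varphi}_0\neq 0$ on $\overline{B(0,r)}$ and apply Theorem \ref{wnwceth3} with that same $r$, obtaining $\operatorname*{supp}\hat{\mathbf{G}}\subseteq\overline{B(0,r)}$. You then want $\alpha\in\mathcal{D}(B(0,r'))$, $r'>r$, with $\alpha\equiv 1$ near $\overline{B(0,r)}$. But $\alpha\phi$ is then supported in $B(0,r')$, not in $B(0,r)$, so it is not covered by ``the case just handled''; more importantly, $\hat{\chi}=\alpha\phi/\hat{\varphi}_0$ need not be smooth, because you have no control on $\hat{\varphi}_0$ on the annulus $r<|u|\leq r'$. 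The fix is easy and does not change the structure of the argument: since Theorem \ref{wnwceth3} holds for \emph{every} radius exceeding $\tau$, apply it with some $r_0\in(\tau,r)$ so that $\operatorname*{supp}\hat{\mathbf{G}}\subseteq\overline{B(0,r_0)}\subset B(0,r)$, and then take $\alpha\in\mathcal{D}(B(0,r))$ with $\alpha\equiv 1$ on a neighborhood of $\overline{B(0,r_0)}$. Now $\operatorname*{supp}\alpha$ is a compact subset of $\overline{B(0,r)}$, on which $\hat{\varphi}_0$ is nonvanishing, so $\alpha\phi/\hat{\varphi}_0\in\mathcal{D}(\mathbb{R}^{n})$ for every $\phi\in\mathcal{S}(\mathbb{R}^{n})$ and the computation from Theorem \ref{wnwceth2} closes the proof. (Also, strictly speaking $\Psi(x,y)=y^{k}(1+|x|)^{-l}$ produces $\mathrm{d}x\,\mathrm{d}y/y$ rather than $\mathrm{d}x\,\mathrm{d}y$ in the $L^{1,1}_\Psi$-norm; use $\Psi(x,y)=y^{k+1}(1+|x|)^{-l}$ to match \eqref{local ce eq} exactly — this is immaterial for \eqref{wnweqgb}.)
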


In particular, when $\int_{\mathbb{R}}\varphi(t)\mathrm{d}t\neq 0$ the third hypothesis in Theorem \ref{wnwceth4} becomes superfluous and we therefore have,
\begin{corollary}
\label{wnwcephi} Let $\mathbf{f}\in\mathcal{S}'(\mathbb{R}^{n},X)$
and let $\varphi\in\mathcal{S}(\mathbb{R}^{n})$ be such that
$\int_{\mathbb{R}^{n}}\varphi(t)\mathrm{d}t\neq 0$. Then,
$\mathcal{S}'(\mathbb{R}^{n},E)$ if and only if the conditions (i) and (ii) from Theorem \ref{wnwceth4} are satisfied.
\end{corollary}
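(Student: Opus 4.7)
The plan is to handle the two directions separately, with the ``only if'' direction being essentially a restatement of the Abelian result Proposition \ref{wnwp2}: if $\mathbf{f}\in\mathcal{S}'(\mathbb{R}^{n},E)$, then $M^{\mathbf{f}}_{\varphi}$ is an $E$-valued $C^{\infty}$ function of slow growth on $\mathbb{H}^{n+1}$, from which (i) is immediate and the integrability (ii) follows by absorbing the polynomial growth into the weights $y^{k}(1+|x|)^{-l}$. All the work is in the ``if'' direction, and my plan there is to reduce to Theorem \ref{wnwceth4} by producing, out of the hypotheses (i) and (ii) alone, an auxiliary test function $\varphi_{0}$ that discharges the missing condition (iii) appearing in that theorem.

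The crucial preliminary observation is that the index of non-degenerateness of $\varphi$ satisfies $\tau=0$: since $\hat{\varphi}(0)=\int_{\mathbb{R}^{n}}\varphi(t)\,\mathrm{d}t\neq 0$, the function $R_{\omega}(r)=\hat{\varphi}(r\omega)$ verifies $R_{\omega}(0)\neq 0$ for every $\omega\in\mathbb{S}^{n-1}$, so $0\in\operatorname*{supp}R_{\omega}$ uniformly in $\omega$, forcing $\tau=0$. Consequently, the non-vanishing constraint on $\varphi_{0}$ in Theorem \ref{wnwceth4} collapses to the single condition $\hat{\varphi}_{0}(0)\neq 0$, i.e.\ $\int_{\mathbb{R}^{n}}\varphi_{0}(t)\,\mathrm{d}t\neq 0$.

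Next, I would choose $\varphi_{0}:=\varphi_{y_{0}}$ for a suitable $y_{0}\in(0,1)$. Then $\hat{\varphi}_{0}(0)=\hat{\varphi}(0)\neq 0$ automatically, and the key identity
$$
(\mathbf{f}\ast\varphi_{0})(x)=M^{\mathbf{f}}_{\varphi}(x,y_{0})
$$
turns condition (iii) of Theorem \ref{wnwceth4} into a statement about a single horizontal slice of $M^{\mathbf{f}}_{\varphi}$. Applying Fubini--Tonelli to the integral in (ii), the map $y\mapsto y^{k}\int_{\mathbb{R}^{n}}(1+|x|)^{-l}\|M^{\mathbf{f}}_{\varphi}(x,y)\|\,\mathrm{d}x$ is finite for a.e.\ $y\in(0,1)$; combining this with (i), one can then select $y_{0}\in(0,1)$ such that simultaneously $x\mapsto M^{\mathbf{f}}_{\varphi}(x,y_{0})$ is measurable and $E$-valued almost everywhere and $\int_{\mathbb{R}^{n}}(1+|x|)^{-l}\|M^{\mathbf{f}}_{\varphi}(x,y_{0})\|\,\mathrm{d}x<\infty$. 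With this $\varphi_{0}$ in hand, every hypothesis of Theorem \ref{wnwceth4} is met and the theorem yields $\mathbf{f}\in\mathcal{S}'(\mathbb{R}^{n},E)$.

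I do not foresee a serious obstacle here. The substantive content is already packed into Theorem \ref{wnwceth4}; the only thing to get right is the joint observation that $\tau=0$ and that dilations of $\varphi$ itself are legitimate choices of $\varphi_{0}$, after which the extraction of a good $y_{0}$ via Fubini is entirely routine. If any pitfall lurks, it is simply the bookkeeping needed to ensure that the three separate ``almost everywhere'' conclusions (measurability of the slice as an $E$-valued map, $E$-valuedness almost everywhere in $x$, and finiteness of the weighted integral) can be realized at a single $y_{0}$; but each fails only on a null set of $y$, so their union is still null and any $y_{0}$ off this union works.
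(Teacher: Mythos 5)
Your proof is correct and matches the approach the paper intends but leaves implicit: the paper simply remarks that condition (iii) of Theorem \ref{wnwceth4} ``becomes superfluous'' when $\int_{\mathbb{R}^{n}}\varphi(t)\,\mathrm{d}t\neq 0$, giving no argument. Your Fubini--Tonelli step selecting $\varphi_{0}=\varphi_{y_{0}}$ with a well-chosen $y_{0}\in(0,1)$ is exactly the right way to make that remark rigorous (taking $\varphi_{0}=\varphi$ literally would put the slice at the null set $\{y=1\}$), and the two supporting observations --- that $\tau=0$ whenever $\hat{\varphi}(0)\neq 0$, and that $(\mathbf{f}\ast\varphi_{y_{0}})(x)=M_{\varphi}^{\mathbf{f}}(x,y_{0})$ --- are precisely what reduces the corollary to Theorem \ref{wnwceth4}.
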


\section{Strongly non-degenerate test functions} \label{wnwces}
A strengthened version of both Theorem \ref{wnwceth1} and Theorem \ref{wnwceth3}
holds if we restrict the non-degenerate test functions to those fulfilling the requirements of the following definition.
\begin{definition}
\label{wnwced2} Let $\varphi\in\mathcal{S}(\mathbb{R}^{n})$. We call $\varphi$
\emph{strongly} non-degenerate if there exist constants $N\in\mathbb{N}$,
$r>0$, and $C>0$ such that
\begin{equation}
\label{wnwceeq13}
C\left|u\right|^{N}\leq |\hat{\varphi}(u)|\ , \ \ \ \mbox{for all
}\left|u\right|\leq r.
\end{equation}
\end{definition}

\bigskip

The class of test functions from Definition \ref{wnwced2} turns out to be the same as that employed by Drozhzhinov and Zav'yalov in \cite{drozhzhinov-z3}, but they formulated their notion of non-degenerateness in terms of the Taylor polynomials of $\hat{\varphi}$. We say that a polynomial $P$ is non-degenerate (at the origin, in the Drozhzhinov-Zav'yalov sense) if
for each $\omega\in\mathbb{S}^{n-1}$ one has that
$$
P(r\omega)\not\equiv 0, \ \ \ r\in\mathbb{R}_{+}.$$
Drozhzhinov and Zavialov have then considered the class of test functions $\varphi\in\mathcal{S}(\mathbb{R}^{n})$
for which there exists $N\in\mathbb{N}$ such that
$$\sum_{\left|m\right|\leq N}\frac{\hat{\varphi}^{(m)}(0)u^{m}}{m!}$$ is a non-degenerate polynomial. One readily verifies that the latter property introduced by Drozhzhinov and Zav'ylalov is equivalent to strong non-degenerateness in the sense of Definition \ref{wnwced2}. It should also be noticed that strong non-degenerateness is included in Definition \ref{wnwd1}, but, naturally, Definition \ref{wnwd1} gives much more test functions (cf. \cite[Remark 4.3, p. 9]{Pil-Vin2014}). 

We can now state our first result concerning strongly non-degenerate test functions. 

\begin{theorem}
\label{wnwceth5} Let $\mathbf{f}\in\mathcal{S}'(\mathbb{R}^{n},X)$
and let $\varphi\in\mathcal{S}(\mathbb{R}^{n})$ be
strongly non-degenerate. Assume:
\begin{enumerate}
\item[(i)] $M_{\varphi}^{\mathbf{f}}(x,y)$ takes values in $E$
for almost all $(x,y)\in\mathbb{R}^{n}\times(0,1]$ and is measurable as an $E$-valued function on $\mathbb{R}^{n}\times(0,1]$.
\item[(ii)] There are $k,l\in\mathbb{N}$ such that \eqref{local ce eq} holds.
\end{enumerate}
Then, there exists $\mathbf{G}\in \mathcal{S}'(\mathbb{R}^{n},X)$ such
that $\mathbf{f}-\mathbf{G}\in\mathcal{S}'(\mathbb{R}^{n},E)$ and
$\operatorname*{supp} \hat{\mathbf{G}}\subseteq\left\{0\right\}$. Moreover, the relations \eqref{wnwceeq10} hold.

If, in addition, the condition (iii) from Theorem \ref{wnwceth4} holds for some test function $\varphi_{0}\in \mathcal{S}(\mathbb{R}^{n})$ such that $\int_{\mathbb{R}^{n}} \varphi_{0}(t)\mathrm{d}t\neq 0$, then $\mathbf{f}\in\mathcal{S}'(\mathbb{R}^{n},E)$
\end{theorem}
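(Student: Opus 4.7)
My plan is to reduce Theorem \ref{wnwceth5} to Theorem \ref{wnwceth3} and then use strong non-degenerateness to shrink the Fourier support of the resulting correction term from a small ball down to the origin. Since $|\hat\varphi(u)|\ge C|u|^N$ near $0$ forces the index of non-degenerateness to be $\tau=0$, Theorem \ref{wnwceth3} applies for any $r>0$, producing an entire distribution $\mathbf{G}_0\in\mathcal{S}'(\mathbb{R}^n,X)$ with $\operatorname*{supp}\hat{\mathbf{G}}_0\subseteq\overline{B(0,r)}$ and $\mathbf{f}-\mathbf{G}_0\in\mathcal{S}'(\mathbb{R}^n,E)$. Proposition \ref{wnwp2} applied to $\mathbf{f}-\mathbf{G}_0$ then shows that $M^{\mathbf{G}_0}_\varphi=M^{\mathbf{f}}_\varphi-M^{\mathbf{f}-\mathbf{G}_0}_\varphi$ is $E$-valued almost everywhere on $\mathbb{R}^n\times(0,1]$ and still satisfies the local class estimate from hypothesis (ii).

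The core step is to show $P_q^\varphi(\partial/\partial x)\mathbf{G}_0\in\mathcal{S}'(\mathbb{R}^n,E)$ for every $q\in\mathbb{N}$, adapting the coefficient extraction carried out in the proof of Theorem \ref{wnwceth1}. Taylor expansion $\hat\varphi(v)=\sum_{q=0}^{N}P_q^\varphi(v)+R_N(v)$ with $R_N(v)=O(|v|^{N+1})$, combined with the compact support of $\hat{\mathbf{G}}_0$, yields
\[
M^{\mathbf{G}_0}_\varphi(x,y)=\sum_{q=0}^{N}(-iy)^{q}(P_q^\varphi(\partial/\partial x)\mathbf{G}_0)(x)+O(y^{N+1})\quad\text{as }y\to 0^{+},
\]
and testing against $\rho\in\mathcal{S}(\mathbb{R}^n)$ in the $x$-variable converts this into an $X$-valued smooth function of $y$ whose Taylor coefficients are the pairings $\langle P_q^\varphi(\partial/\partial x)\mathbf{G}_0,\rho\rangle$. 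The a.e.\ $E$-valuedness of $M^{\mathbf{G}_0}_\varphi$ together with the local $L^{p,p'}$-bound (via Fubini and inductive extraction in $q$) forces every such pairing to lie in $E$.

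Strong non-degenerateness now enters algebraically: the Taylor polynomial $T_N\hat\varphi=\sum_{q\le N}P_q^\varphi$ is non-degenerate in the Drozhzhinov--Zav'yalov sense, and a polynomial-Nullstellensatz-type lemma (\cite{drozhzhinov-z3}) produces $d\in\mathbb{N}$ with $\mathbb{P}_d\subseteq I_\varphi$, so $u^m\hat{\mathbf{G}}_0\in\mathcal{S}'(\mathbb{R}^n,E)$ for every $|m|\ge d$. Choosing $\chi_0\in\mathcal{D}(\mathbb{R}^n)$ equal to $1$ near $0$ and decomposing each $\rho\in\mathcal{S}(\mathbb{R}^n)$ as $\rho=\chi_0 T_d\rho+(\rho-\chi_0 T_d\rho)$, where $T_d\rho$ is the Taylor polynomial of order $<d$ at $0$, a Hadamard factorization shows $\langle\hat{\mathbf{G}}_0,\rho-\chi_0 T_d\rho\rangle\in E$. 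Setting $\mathbf{w}_m:=\langle\hat{\mathbf{G}}_0,\chi_0 u^m\rangle\in X$, we obtain $\hat{\mathbf{G}}_0\equiv\hat{\mathbf{G}}\pmod{\mathcal{S}'(\mathbb{R}^n,E)}$ with $\hat{\mathbf{G}}:=\sum_{|m|<d}\frac{(-1)^{|m|}\mathbf{w}_m}{m!}\delta^{(m)}$; thus $\mathbf{G}$ is an $X$-valued polynomial of degree $<d$, $\operatorname*{supp}\hat{\mathbf{G}}\subseteq\{0\}$, and $\mathbf{f}-\mathbf{G}\in\mathcal{S}'(\mathbb{R}^n,E)$. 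The relations \eqref{wnwceeq10} follow at once by writing $P_q^\varphi(\partial)\mathbf{f}=P_q^\varphi(\partial)(\mathbf{f}-\mathbf{G}_0)+P_q^\varphi(\partial)\mathbf{G}_0$.

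For the conditional conclusion, $\mathbf{G}*\varphi_0=\mathbf{f}*\varphi_0-(\mathbf{f}-\mathbf{G})*\varphi_0$ is $E$-valued almost everywhere by (iii) and Proposition \ref{wnwp2}; since $\mathbf{G}$ is a polynomial of degree $<d$, so is $\mathbf{G}*\varphi_0$, and its coefficients are related to those of $\mathbf{G}$ by an upper-triangular linear system with diagonal entries all equal to $\int_{\mathbb{R}^n}\varphi_0(t)\,\mathrm{d}t\neq 0$. Polynomial values determine polynomial coefficients, so the a.e.\ $E$-valuedness of $\mathbf{G}*\varphi_0$ places its coefficients in $E$, and inverting the triangular system puts those of $\mathbf{G}$ in $E$, whence $\mathbf{G}\in\mathcal{S}'(\mathbb{R}^n,E)$ and $\mathbf{f}\in\mathcal{S}'(\mathbb{R}^n,E)$. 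The main obstacle is the coefficient extraction in the second paragraph: because $M^{\mathbf{G}_0}_\varphi$ is only a.e.\ $E$-valued on $\mathbb{R}^n\times(0,1]$ and $E$ need not be closed in $X$, the extraction must be performed weakly after testing against $\rho\in\mathcal{S}(\mathbb{R}^n)$ and inductively in $q$, leveraging the integrability from (ii) rather than any pointwise analyticity argument.
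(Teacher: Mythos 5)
Your opening reduction is correct and matches the paper's first step: strong non-degenerateness forces the index $\tau=0$, so Theorem \ref{wnwceth3} applies and one may assume $\operatorname*{supp}\hat{\mathbf{f}}$ is contained in a small ball. After that, however, your argument diverges from the paper and runs into a genuine error.

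The pivotal false step is the algebraic claim that strong non-degenerateness supplies a $d$ with $\mathbb{P}_{d}\subseteq I_{\varphi}$. No such Nullstellensatz-type lemma holds; it would render the additional hypothesis in Corollary \ref{wnwcec4} (and in Drozhzhinov--Zav'yalov's Theorem 2.2) vacuous. A concrete counterexample, which actually appears in the paper as Example \ref{wnwceex1}, is a kernel in $n\ge 2$ with $\hat{\varphi}(u)=|u|^{2}+O(|u|^{N})$ for every $N$. This $\varphi$ is strongly non-degenerate (since $|\hat\varphi(u)|\ge C|u|^{2}$ near the origin), yet $I_{\varphi}=[\,|u|^{2}\,]$ contains no monomial $u_{1}^{d}$ for any $d$: writing $u_{1}^{d}=(u_{1}^{2}+u_{2}^{2})Q(u)$ and evaluating at $u_{1}=iu_{2}$ gives a contradiction. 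Hence $\mathbb{P}_{d}\not\subseteq I_{\varphi}$ for every $d$. Building on this false claim, you then conclude that $\mathbf{G}$ is an $X$-valued polynomial of degree $<d$; but that is strictly stronger than what the theorem asserts, and Example \ref{wnwceex1} shows it is actually false: there, $\mathbf{G}(t,\xi)=\sum_{q}Q_{q}(t)\chi_{q}(\xi)$ is a genuine infinite multipole series, $M^{\mathbf{f}}_{\varphi}=M^{\mathbf{h}}_{\varphi}$ satisfies the class estimate, and no $C(\mathbb{R})$-valued polynomial correction exists. The theorem only promises $\operatorname*{supp}\hat{\mathbf{G}}\subseteq\{0\}$, which for a general locally convex $X$ allows infinite multipole series.

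There is a secondary soft spot that would remain even if the algebra were fixed: the extraction of the coefficients $P^{\varphi}_{q}(\partial)\mathbf{G}_{0}$ from the asymptotic expansion of $M^{\mathbf{G}_{0}}_{\varphi}(x,y)$ as $y\to 0^{+}$ is not justified. The expansion has an $X$-valued remainder, $M^{\mathbf{G}_{0}}_{\varphi}$ is only a.e.\ $E$-valued, and $E$ need not be closed in $X$; you acknowledge this but the ``weak and inductive'' extraction you gesture at does not resolve it. The paper sidesteps all of this. After the Theorem \ref{wnwceth3} reduction, it argues directly at the single scale $y=\sigma$: one picks $\sigma$ so small that $|\hat{\varphi}(u)|\ge C_{1}|u|^{N}$ on $\overline{B(0,2\sigma)}$, and for $\hat{\eta}\in\mathcal{S}_{0}(\mathbb{R}^{n})$ forms $\hat{\chi}(u)=\rho(u)\hat{\eta}(u)/\hat{\varphi}(\sigma u)\in\mathcal{S}(\mathbb{R}^{n})$ (division is legitimate because $\hat{\eta}$ vanishes to infinite order at $0$). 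This exhibits $\langle\mathbf{f},\hat{\eta}\rangle=(2\pi)^{n}\int_{\mathbb{R}^{n}}\chi(-\xi)M^{\mathbf{f}}_{\varphi}(\xi,\sigma)\,\mathrm{d}\xi$, which the class estimate bounds in $E$-norm by a continuous seminorm of $\hat{\eta}$; thus the restriction of $\mathbf{f}$ to $\mathcal{S}_{0}$ lies in $\mathcal{S}'_{0}(\mathbb{R}^{n},E)$, and the correction $\mathbf{G}$ and relations \eqref{wnwceeq10} follow exactly as in Theorem \ref{wnwceth1}. Your route would only recover the intended statement after discarding the polynomial conclusion and replacing the ideal-containment claim with this division argument.
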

\begin{proof}
By Theorem \ref{wnwceth3}, we may assume that
$\operatorname*{supp}\hat{\mathbf{f}} \subseteq\overline{B(0,1)}$.
 Let $\rho\in
\mathcal{S}(\mathbb{R}^{n})$ such that $\rho(u)=1$ for $u\in
B(0,3/2)$ and $\operatorname*{supp}\rho\subset B(0,2)$. We can find $\sigma,C_1>0$ and
$N\in\mathbb{N}$ such that $2\sigma\leq 1$ and
$C_1|u|^{N}\leq|\hat{{\varphi}}(u)|$, for all $u\in
\overline{B(0,2\sigma)}$. Given
$\hat{\eta}\in\mathcal{S}_{0}(\mathbb{R}^{n})$, then
$
\hat{\chi}(u)=\hat\chi_{\hat{\eta}}(u)=\rho(u)\eta(u)/\hat{\varphi}(\sigma
u)
$ defines an element of $\mathcal{S}(\mathbb{R}^{n})$ in a
continuous fashion; consequently, the mapping
$\gamma:\mathcal{S}_{0}(\mathbb{R}^{n})\to [0,\infty)$ given
by
$\gamma(\hat{\eta})=(2\pi)^{n}\int_{\mathbb{R}^{n}}(1+\left|\xi\right|)^{l}\left|\chi(\xi)\right|\mathrm{d}\xi$
is a continuous seminorm over $\mathcal{S}_{0}(\mathbb{R}^{n})$.
Now, for any $\hat{\eta}\in\mathcal{S}_{0}(\mathbb{R}^{n})$,
\begin{align*}
\left\langle \mathbf{f},\hat{\eta}\right\rangle & =\left\langle \hat{\mathbf{f}}(u),\hat{\chi}(u)\hat{\varphi}(\sigma u)\right\rangle
=(2\pi)^{n}\int_{\mathbb{R}^{n}}\chi(-\xi)M_{\varphi}^\mathbf{f}(\xi,\sigma)\mathrm{d}\xi.
\end{align*}
Therefore, $\left\|\left\langle \mathbf{f},\eta\right\rangle\right\|\leq (C/\sigma^{k})\gamma(\hat{\eta}),$ for all $\hat{\eta}\in\mathcal{S}_{0}(\mathbb{R}^{n}),$
and the latter implies that the restriction of $\mathbf{f}$ to $\mathcal{S}_{0}(\mathbb{R}^{n})$ belongs to $\mathcal{S}'_{0}(\mathbb{R}^{n},E)$. The argument we already gave in the proof of Theorem \ref{wnwceth1} yields the existence of $\mathbf{G}$ satisfying all the requirements. That  \eqref{wnwceeq10} holds for each $q$ is shown exactly as in the proof of Theorem \ref{wnwceth1}. Finally, the proof of the last assertion of the theorem is the same as that of Theorem \ref{wnwceth2}.

\end{proof}

In dimension $n=1$, there is no distinction between non-degenerateness and strong non-degenerateness, whenever we consider test functions from $\mathcal{S}(\mathbb{R})\setminus\mathcal{S}_{0}(\mathbb{R})$. Actually, a stronger result than Theorem \ref{wnwceth5} holds in the one-dimensional case.

\begin{proposition}
\label{wnwcep3}
Let $\mathbf{f}\in\mathcal{S}'(\mathbb{R},X)$
and let $\varphi\in\mathcal{S}(\mathbb{R})$ be such that $\int_{-\infty}^{\infty}t^{d}\varphi(t)\neq0$, for some $d\in\mathbb{N}$. If the conditions (i) and (ii) of Theorem \ref{wnwceth5} are satisfied, then there exists an $X$-valued polynomial $\mathbf{P}$ of degree at most $d-1$ such that $\mathbf{f}-\mathbf{P}\in\mathcal{S}'(\mathbb{R},E)$. 
\end{proposition}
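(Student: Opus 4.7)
The hypothesis $\int_{\mathbb{R}}t^{d}\varphi(t)\mathrm{d}t\neq 0$ means precisely that $\hat\varphi^{(d)}(0)=(-i)^{d}\int t^{d}\varphi\neq 0$. Letting $N\le d$ be the smallest index with $\hat\varphi^{(N)}(0)\neq 0$, one has $|\hat\varphi(u)|\ge c|u|^{N}$ in a neighborhood of the origin, so $\varphi$ is strongly non-degenerate in the sense of Definition \ref{wnwced2}. In particular, Theorem \ref{wnwceth5} applies to $\mathbf{f}$ and $\varphi$.

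The key additional input I would exploit is the \emph{moreover} clause of Theorem \ref{wnwceth5}, which asserts that the relations \eqref{wnwceeq10} hold, i.e., $P_{q}^{\varphi}(\partial/\partial t)\mathbf{f}\in\mathcal{S}'(\mathbb{R},E)$ for every $q\in\mathbb{N}$. In the one-dimensional setting, \eqref{taylorhomoeq} reads
\[
P_{d}^{\varphi}(\partial/\partial t)=\frac{\hat\varphi^{(d)}(0)}{d!}\,\partial_{t}^{d},
\]
which is a nonzero scalar multiple of $\partial_{t}^{d}$ by our hypothesis. Thus \eqref{wnwceeq10} with $q=d$ yields at once that $\mathbf{f}^{(d)}\in\mathcal{S}'(\mathbb{R},E)$.

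Next, I would construct an antiderivative of $\mathbf{f}^{(d)}$ of order $d$ inside the space $\mathcal{S}'(\mathbb{R},E)$. Fix $\theta\in\mathcal{S}(\mathbb{R})$ with $\int\theta=1$. For each $\phi\in\mathcal{S}(\mathbb{R})$ the function $\tilde\phi(x)=\int_{-\infty}^{x}\bigl(\phi(s)-\theta(s)\int_{\mathbb{R}}\phi\bigr)\mathrm{d}s$ belongs to $\mathcal{S}(\mathbb{R})$ and satisfies $\tilde\phi'=\phi-\theta\int\phi$. Given any $\mathbf{v}\in\mathcal{S}'(\mathbb{R},E)$, the formula $\langle\mathcal{A}\mathbf{v},\phi\rangle=-\langle\mathbf{v},\tilde\phi\rangle$ defines an antiderivative $\mathcal{A}\mathbf{v}\in\mathcal{S}'(\mathbb{R},E)$ with $(\mathcal{A}\mathbf{v})'=\mathbf{v}$. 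Iterating $d$ times starting from $\mathbf{f}^{(d)}$ produces $\mathbf{V}\in\mathcal{S}'(\mathbb{R},E)$ with $\mathbf{V}^{(d)}=\mathbf{f}^{(d)}$.

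Finally, set $\mathbf{P}:=\mathbf{f}-\mathbf{V}\in\mathcal{S}'(\mathbb{R},X)$. Then $\mathbf{P}^{(d)}=0$ in $\mathcal{S}'(\mathbb{R},X)$, and by the vector-valued analogue of the standard structure theorem (a tempered distribution annihilated by $\partial_{t}^{d}$ is a polynomial of degree at most $d-1$, argued by testing against Schwartz functions whose first $d$ moments vanish, which can be written as $d$-th derivatives of Schwartz functions), $\mathbf{P}$ is an $X$-valued polynomial of degree at most $d-1$. Since $\mathbf{f}-\mathbf{P}=\mathbf{V}\in\mathcal{S}'(\mathbb{R},E)$, we are done. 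The only points requiring genuine care are the vector-valued antiderivative construction and the identification of the $X$-valued kernel of $\partial_{t}^{d}$, both routine extensions of the scalar facts; the essential substance of the argument is \eqref{wnwceeq10}, which already in one dimension reduces the problem to antidifferentiation.
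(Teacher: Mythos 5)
Your proof is correct, and it reaches the pivotal intermediate conclusion $\mathbf{f}^{(d)}\in\mathcal{S}'(\mathbb{R},E)$ by a genuinely different route from the paper's. You extract it from the ``moreover'' clause \eqref{wnwceeq10} of Theorem \ref{wnwceth5}, after observing that in dimension one $P_{d}^{\varphi}(\partial/\partial t)$ is a nonzero scalar multiple of $\partial_{t}^{d}$. The paper avoids invoking the Taylor--polynomial machinery of Theorem \ref{wnwceth5} altogether: assuming (without loss of generality) that $d$ is the least index with $\int t^{d}\varphi\,\mathrm{d}t\neq 0$, it constructs $\phi\in\mathcal{S}(\mathbb{R})$ with $\phi^{(d)}=(-1)^{d}\varphi$ and $\int\phi\neq 0$ (this is possible precisely because the first $d$ moments of $\varphi$ then vanish), notes the identity $M_{\phi}^{\mathbf{f}^{(d)}}(x,y)=y^{-d}M_{\varphi}^{\mathbf{f}}(x,y)$ up to a harmless sign, and applies the lighter Corollary \ref{wnwcephi} directly to $\mathbf{f}^{(d)}$ with the mean-nonzero kernel $\phi$. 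The paper's route is shorter and more self-contained, needing only the ``no correction term'' corollary rather than the full \eqref{wnwceeq10} mechanism; yours is more automatic once Theorem \ref{wnwceth5} is in hand, and it has the merit of spelling out the vector-valued antidifferentiation and the identification of the $X$-valued kernel of $\partial_{t}^{d}$ with polynomials of degree at most $d-1$, details the paper dismisses as clear. Both arguments are sound.
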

\begin{proof} We assume that $d$ is the least integer with the assumed property. There exists $\phi\in\mathcal{S}(\mathbb{R})$ such that $\phi^{(d)}=(-1)^{d}\varphi$, and $\int_{-\infty}^{\infty}\phi(t)\neq0$. Then,
$
M_{\phi}^{\mathbf{f}^{(d)}}(x,y)=y^{-d}M_{\varphi}^{\mathbf{f}}(x,y).
$
Hence, an application of Corollary \ref{wnwcephi} gives that $\mathbf{f}^{(d)}\in\mathcal{S}'(\mathbb{R},E)$, and this clearly implies the existence of $\mathbf{P}$ with the desired properties.
\end{proof}

Observe that the conclusion of Proposition \ref{wnwcep3} does not hold for multidimensional regularizing transforms, in general, even if the kernels $\varphi$ are strongly non-degenerate. This fact is shown by Example \ref{wnwceex1}. Naturally, if $X$ is a normed space in Theorem \ref{wnwceth5}, then $\mathbf{G}$ must be an $X$-valued polynomial, this fact is stated in the next corollary. Corollary \ref{wnwcec2} extends an important result of Drozhzhinov and Zavialov \cite[Thm. 2.1]{drozhzhinov-z3}.

\begin{corollary}
\label{wnwcec3} Let the hypotheses of Theorem \ref{wnwceth5} be satisfied. If $X$ is a normed space, then there is an
$X$-valued polynomial $\mathbf{P}$ such that
$\mathbf{f-P}\in\mathcal{S}'(\mathbb{R}^{n},E)$.  Furthermore, $\mathbf{f}\in \mathcal{S}'_{I_{\varphi}}(\mathbb{R}^{n}, E)$.
\end{corollary}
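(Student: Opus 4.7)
The plan is to bolt together the two halves of Theorem \ref{wnwceth5} with the structural fact that, in a normed space, a distribution supported at a single point must be a finite sum of derivatives of the Dirac delta. Applying Theorem \ref{wnwceth5} to $\mathbf{f}$, I first obtain $\mathbf{G}\in\mathcal{S}'(\mathbb{R}^n,X)$ with $\mathbf{f}-\mathbf{G}\in\mathcal{S}'(\mathbb{R}^n,E)$ and $\operatorname{supp}\hat{\mathbf{G}}\subseteq\{0\}$, together with the relations $P_{q}^{\varphi}(\partial/\partial t)\mathbf{f}\in\mathcal{S}'(\mathbb{R}^{n},E)$ for all $q\in\mathbb{N}$ (equation \eqref{wnwceeq10}). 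When $X$ is a Banach (in particular normed) space, any $X$-valued tempered distribution with support in $\{0\}$ has finite order (this was already noted in the discussion preceding Corollary \ref{wnwcec1}), so $\hat{\mathbf{G}}=\sum_{|m|\le N}(-1)^{|m|}\delta^{(m)}\mathbf{w}_{m}$ with vectors $\mathbf{w}_{m}\in X$, and hence $\mathbf{G}$ is a genuine $X$-valued polynomial $\mathbf{P}$.

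For the second assertion, observe that \eqref{wnwceeq10} applied to $\mathbf{f}$, combined with $\mathbf{f}-\mathbf{P}\in\mathcal{S}'(\mathbb{R}^n,E)$, forces $P_{q}^{\varphi}(\partial/\partial t)\mathbf{P}\in\mathcal{S}'(\mathbb{R}^n,E)$ for every $q$. Since $\mathbf{P}$ is a polynomial, so is $P_{q}^{\varphi}(\partial/\partial t)\mathbf{P}$, and its coefficients must therefore lie in $E$. To conclude $\mathbf{f}\in\mathcal{S}'_{I_{\varphi}}(\mathbb{R}^{n},E)$, it suffices to check that $\langle\mathbf{f},\phi\rangle\in E$ for every $\phi\in\mathcal{S}_{I_{\varphi}}(\mathbb{R}^{n})$; and since $\langle\mathbf{f}-\mathbf{P},\phi\rangle\in E$ automatically, it is enough to verify $\int_{\mathbb{R}^{n}}\phi(t)\mathbf{P}(t)\,\mathrm{d}t\in E$.

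At this point I simply repeat the algebraic manipulation from the proof of Corollary \ref{wnwcec1} verbatim. Let $N=\deg\mathbf{P}$. The Taylor polynomial $T_{N}(u)=\sum_{|m|\le N}\hat{\phi}^{(m)}(0)u^{m}/m!$ belongs to $I_{\varphi}$ by definition of $\mathcal{S}_{I_{\varphi}}$, so by \cite[Lem.~A.5]{drozhzhinov-z3} it can be written as $T_{N}=\sum_{q=0}^{N}Q_{q}\,P_{q}^{\varphi}$ for suitable polynomials $Q_{q}$. Pairing $\mathbf{P}(-i\partial/\partial u)\delta$ with $\hat{\phi}$ yields
\[
\int_{\mathbb{R}^{n}}\phi(t)\mathbf{P}(t)\,\mathrm{d}t
=\sum_{q=0}^{N}(-i)^{q}\,Q_{q}(-i\partial/\partial u)\bigl(P_{q}^{\varphi}(\partial/\partial u)\mathbf{P}\bigr)(0),
\]
and each term on the right-hand side lies in $E$ because $P_{q}^{\varphi}(\partial/\partial u)\mathbf{P}$ is an $E$-valued polynomial.

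There is essentially no serious obstacle: every ingredient has been prepared in Theorem \ref{wnwceth5} and in the proof of Corollary \ref{wnwcec1}. The only point that deserves a line of care is the passage from ``$\operatorname{supp}\hat{\mathbf{G}}\subseteq\{0\}$ with values in $X$'' to ``$\mathbf{G}$ is a polynomial of finite degree'', which is where the assumption that $X$ is normed (and not just locally convex) is essential and which distinguishes this corollary from Theorem \ref{wnwceth5} itself.
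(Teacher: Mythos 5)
Your proof is correct and follows essentially the same route the paper intends: apply Theorem \ref{wnwceth5} to get $\mathbf{G}$, use that an $X$-valued distribution supported at the origin in a normed space is a finite multipole sum (so $\mathbf{G}=\mathbf{P}$ is a polynomial), and then rerun the algebraic computation from the proof of Corollary \ref{wnwcec1} word for word to get $\mathbf{f}\in\mathcal{S}'_{I_{\varphi}}(\mathbb{R}^{n},E)$. The only cosmetic remark is that the representation $T_{N}=\sum_{q=0}^{N}Q_{q}P_{q}^{\varphi}$ is just the elementary fact that a degree-$\leq N$ polynomial in an ideal generated by homogeneous polynomials can be written using only generators of degree $\leq N$ (by discarding higher-degree homogeneous pieces), rather than a consequence of \cite[Lem.~A.5]{drozhzhinov-z3}.
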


As in Corollary \ref{wnwcec2}, we recover the following result of Drozhzhinov and Zav'yalov  (cf. \cite[Thm. 2.2]{drozhzhinov-z3}
).

\begin{corollary}
\label{wnwcec4} Let the hypotheses of Corollary \ref{wnwcec3} be satisfied. If there is $d\in\mathbb{N}$ such that $\mathbb{P}_{d}(\mathbb{R}^{n})$ is contained in the ideal generated by the polynomials $P_{0}^{\varphi},P_1^{\varphi},P_2^{\varphi},\dots,P_{d}^{\varphi}$, then there exists an $X$-valued polynomial $\mathbf{P}$ of degree at most $d-1$ such that $\mathbf{f}-\mathbf{P}\in\mathcal{S}'(\mathbb{R}^{n},E)$. In particular, $\mathbf{f}\in\mathcal{S}'_{\mathbb{P}_{d}}(\mathbb{R}^{n},E)$.
\end{corollary}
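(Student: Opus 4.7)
The plan is to mimic the proof of Corollary \ref{wnwcec2} verbatim, with Corollary \ref{wnwcec3} and Theorem \ref{wnwceth5} replacing the roles that Corollary \ref{wnwcec1} and Theorem \ref{wnwceth1} play there. The crucial ingredients are both available: Corollary \ref{wnwcec3} produces an $X$-valued polynomial correction, and Theorem \ref{wnwceth5} supplies the key relations \eqref{wnwceeq10} saying that $P_{q}^{\varphi}(\partial/\partial t)\mathbf{f}\in \mathcal{S}'(\mathbb{R}^{n},E)$ for every $q\in\mathbb{N}$.

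First I would apply Corollary \ref{wnwcec3} to extract an $X$-valued polynomial $\tilde{\mathbf{P}}$, of some total degree $N$, with
\[
\tilde{\mathbf{P}}(t)=\mathbf{P}(t)+\sum_{d\leq |m|\leq N}\mathbf{w}_{m}t^{m},\qquad \mathbf{f}-\tilde{\mathbf{P}}\in\mathcal{S}'(\mathbb{R}^{n},E),
\]
where $\mathbf{P}$ collects the terms of degree at most $d-1$ and $\mathbf{w}_{m}\in X$. The remaining task is to show that $\mathbf{w}_{m}\in E$ whenever $d\leq |m|\leq N$.

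To do this, I would use \eqref{wnwceeq10} in Theorem \ref{wnwceth5}: applied to $\mathbf{f}$ (and therefore to $\tilde{\mathbf{P}}$, since $\mathbf{f}-\tilde{\mathbf{P}}$ is already $E$-valued), it shows that $P_{q}^{\varphi}(\partial/\partial t)\tilde{\mathbf{P}}$ is an $E$-valued polynomial for every $q$. Taking $\mathbb{C}$-linear combinations and products with arbitrary polynomial coefficients, we conclude that $Q(\partial/\partial t)\tilde{\mathbf{P}}$ is $E$-valued for every $Q\in I_{\varphi}$. By the standing hypothesis $\mathbb{P}_{d}(\mathbb{R}^{n})\subseteq I_{\varphi}$, so for any multi-index $m$ with $d\leq |m|\leq N$ the monomial $t^{m}$ belongs to $I_{\varphi}$. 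Evaluating the corresponding differential operator at the origin,
\[
m!\,\mathbf{w}_{m}=\left.\frac{\partial^{|m|}}{\partial t^{m}}\tilde{\mathbf{P}}\right|_{t=0}\in E,
\]
so each $\mathbf{w}_{m}\in E$. Consequently the high-degree tail of $\tilde{\mathbf{P}}$ is $E$-valued, and adding it back into the error term gives $\mathbf{f}-\mathbf{P}\in\mathcal{S}'(\mathbb{R}^{n},E)$ with $\deg\mathbf{P}\leq d-1$. The membership $\mathbf{f}\in\mathcal{S}'_{\mathbb{P}_{d}}(\mathbb{R}^{n},E)$ then follows from the characterization of $\mathcal{S}_{\mathbb{P}_{d}}(\mathbb{R}^{n})$ recalled in Subsection \ref{wnw sub spaces} (test against polynomials of degree at most $d-1$, all of which annihilate $\mathbf{P}$ after integration by parts only in a trivial way, and act on the $E$-valued part $\mathbf{f}-\mathbf{P}$ within $E$).

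I do not expect any genuine obstacle here: the argument is a direct transcription, with the only ``new'' input being that Theorem \ref{wnwceth5} (rather than Theorem \ref{wnwceth1}) already delivers \eqref{wnwceeq10} under the local hypotheses being assumed in Corollary \ref{wnwcec3}. If anything, one must just be careful that $\tilde{\mathbf{P}}$ is a genuine polynomial (justifying pointwise evaluation of its derivatives at $0$), which is exactly what the normed-space case of Corollary \ref{wnwcec3} guarantees.
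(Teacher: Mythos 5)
Your proposal is correct and is exactly the route the paper intends: the paper gives no separate argument for this corollary and simply remarks that it is obtained ``as in Corollary \ref{wnwcec2},'' which is precisely the transcription you carry out, with Corollary \ref{wnwcec3} and Theorem \ref{wnwceth5} (in particular the relations \eqref{wnwceeq10}) replacing Corollary \ref{wnwcec1} and Theorem \ref{wnwceth1}.
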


\section{Vector-valued distributions intertwining representations of $\mathbb{R}^{n}$}
\label{section class estimates with representation}
As an application of our ideas, we extend the one-dimensional considerations from \cite[Sect. 4]{drozhzhinov-z2} to the multidimensional case. 
Throughout this section we suppose that the Banach space $E$ is continuously and linearly included into the locally convex space $X$ and that both carry a representation of $(\mathbb{R}^{n},+)$, that is, there is $\pi: \mathbb{R}^{n}\to L_{b}(X)$ such that 
\begin{enumerate}
\item [(a)]  $\pi(x+h)= \pi(x)\pi(h)$, for all $x,h\in\mathbb{R}^{n}$.
\item [(b)] $\pi(x)\mathbf{v}\in E$ for every $\mathbf{v}\in E$ and $x\in\mathbb{R}^{n}$.
\end{enumerate}
It follows from (b) and the closed graph theorem that in fact (the restriction of) $\pi(x)\in L_{b}(E)$ and thus $\pi$ induces a representation of $\mathbb{R}^{n}$ on $E$ as well. We further assume that $\pi$ is a tempered $C_0$-group of operators on $E$, namely, 
\begin{enumerate}
\item [(c)] There are $l$ and $C$ such that
$
\|\pi(x)\|_{L_{b}(E)}\leq C(1+|x|)^{l},$ $ x\in\mathbb{R}^{n}.$
\item [(d)] $\displaystyle \lim_{x\to 0} \|\pi(x)\mathbf{v}-\mathbf{v}\|=0$ for each $\mathbf{v}\in E$.
\end{enumerate}

We denote the translation operators on $\mathbb{R}^{n}$ as $T_{h}$, so that their actions on functions (and vector-valued distributions) are given by $(T_{h}\phi)(x)=\phi(x-h)$. 
We then say that $\mathbf{f}\in \mathcal{S}'(\mathbb{R}^{n}, X)$
 intertwines $\pi$ and $T$ if  $\pi(x)\circ\mathbf{f}=T_{-x}\mathbf{f}(=\mathbf{f}\circ T_{x})$ for each $x\in\mathbb{R}^{n}$, that is, if
$$
\pi(x)\left(\langle \mathbf{f},\varphi\rangle\right)= \langle \mathbf{f}, T_{x}\varphi\rangle, \qquad \mbox{for every } \varphi\in\mathcal{S}(\mathbb{R}^{n}) \mbox{ and }x\in\mathbb{R}^{n}.
$$

We shall also need the notion of regularly varying functionals, introduced and studied by Drozhzhinov and Zav'yalov in \cite[Sect. 2]{drozhzhinov-z2}. Let $\mathbb{J}$ be a non-negative functional acting on non-negative measurable functions $g:(0,1]\to [0,\infty]$. It is notational convenient to employ a dummy variable of evaluation and write $\mathbb{J}(g)=\mathbb{J}_{y}(g(y))$. The functional is called regularly varying of index $\alpha$ if the ensuing five conditions are satisfied:
\begin{enumerate}
\item [(I)] $\mathbb{J}_{y}(\int g(y,\xi)\mathrm{d}\xi)\leq \int \mathbb{J}_{y}(g(y,\xi))\mathrm{d}\xi $ for all non-negative measurable function $g(y,\xi)$.
\item [(II)] $\mathbb{J}$ is monotone, $\mathbb{J}(g_{1})\leq \mathbb{J}(g_{2})$ whenever $g_{1}(y)\leq g_{2}(y)$ a.e.
\item [(III)] $\mathbb{J}$ is homogeneous, $\mathbb{J}(\lambda g)=\lambda \mathbb{J}(g)$, $\lambda\geq 0$.
\item [(IV)] $\mathbb{J}$ has the monotone convergence property, that is, 
$\mathbb{J}(g_{k})\to \mathbb{J}(g)\to$ as $k\to\infty$, whenever $g_k(y) \nearrow g(y)$ a.e. as $k\to\infty$.
\item [(V)] For every $\varepsilon>0$ there is $C_{\varepsilon}>0$ such that 
$$
\mathbb{J}_{y}(g (ay))\leq 
\begin{cases} 
C_{\varepsilon} a^{\alpha+\varepsilon}\mathbb{J}_{y}(g (y)) & \quad \mbox{if }a\geq 1\\
C_{\varepsilon} a^{\alpha-\varepsilon}\mathbb{J}_{y}(g (y)) & \quad \mbox{if }a\leq 1,
\end{cases}
$$
for every non-negative measurable function with support in $(0,1]$.
\end{enumerate}

\begin{example}
\label{ex regularly varying functional}
A typical example of such a $\mathbb{J}$ is given by a weighted $L^{q}$ norm ($1\leq q\leq\infty$) with respect to a Karamata regularly varying function. Let $c\in L^{\infty}_{loc}(0,1]$ be regularly varying at 0 of index $\alpha$, that is, a positive measurable function that satisfies
$$
\lim_{y\to 0^{+}} \frac{c(ay)}{c(y)}=a^{\alpha}, \qquad a>0.
$$
In view of Potter's estimates \cite[p. 25]{b-g-t}, the functional 
$$\mathbb{J}^{q,c}(g)= \left(\int_{0}^{1} \left(\frac{g(y)}{c(y)} \right)^{q}\frac{\mathrm{d}y}{y} \right)^{\frac{1}{q}},$$
with obvious adjustments when $p=\infty$, defines a regularly varying functional of index $\alpha$. When $c(y)=y^{\alpha}$, we simply write $\mathbb{J}^{q,c}=\mathbb{J}^{q,\alpha}$.
\end{example}

\bigskip

 We also need the ensuing definition.

\begin{definition} \label{def LP pair} Let $ \alpha\in\mathbb{R}$, $\varphi_{0} \in\mathcal{S}(\mathbb{R}^{n})$, and let $ \varphi \in {\mathcal S} (\mathbb{R}^n) $ be a non-degenerate test function with index of non-degenerateness $ \tau.$ The pair $(\varphi_0,\varphi)$ is said to be a \emph{Littlewood-Paley pair} (LP-pair) of order $\alpha$ if $ \hat{\varphi}_{0} (u) \neq 0 $ for $|u|\leq \tau $ and $\varphi\in \mathcal{S}_{\mathbb{P}_{\lfloor \alpha \rfloor}}(\mathbb{R}^{n})$, i.e., 
$\int_{\mathbb{R}^{n}}t^{m}\varphi(t)\mathrm{d}t = 0$ for all multi-index $m\in\mathbb{N}^{n}$ with $\left| m \right|\leq \lfloor\alpha\rfloor $. (Note that if $\alpha<0$, the latter condition on the moments of $\varphi$ is empty and thus simply becomes $
\varphi\in \mathcal{S}(\mathbb{R}^{n})$.)
\end{definition}

We also point out that \cite[Lemma 2.5, p. 711]{drozhzhinov-z2} every regularly varying functional $\mathbb{J}$ of index $\alpha$ satisfies inequalities
\begin{equation}
\label{ineq rvf}
\mathbb{J}^{1,\beta}(g) \leq C_{\beta} \mathbb{J}(g),
\end{equation}
 for some $C_{\beta}>0$ if $\beta<\alpha$.

\begin{theorem} \label{th1 LP}Let $\mathbf{f}\in\mathcal{S}'(\mathbb{R}^{n},X)$ intertwine the representation $\pi$ and the translation group $T$, let $\mathbb{J}$ be a regularly varying functional of index $\alpha\in\mathbb{R}$, and let $(\varphi_0,\varphi)$ be an LP-pair of order $\alpha$. Assume that $\langle \mathbf{f},\varphi_{0} \rangle\in E$, $M_{\varphi}^{\mathbf{f}}(0,y)\in E$
for almost all $y\in(0,1]$ and is measurable as an $E$-valued function on $(0,1]$, and
\begin{equation}
\label{gBesov inequality 1}
\mathbb{J}_{y}(\|M_{\varphi}^{\mathbf{f}}(0,y)\|)<\infty.
\end{equation}
Then, $\mathbf{f}\in\mathcal{S}'(\mathbb{R}^{n},E)$ and there is a continuous seminorm $\gamma$ on $\mathcal{S}(\mathbb{R}^{n})$, independent of $\mathbf{f}$, such that
\begin{equation}
\label{gBesov inequality 2}
\|  \langle \mathbf{f}, \rho \rangle \|+\mathbb{J}_{y}(\|M^{\mathbf{f}}_{\theta}(0,y)\|) \leq (\|  \langle \mathbf{f}, \varphi_{0} \rangle \|+\mathbb{J}_{y}(\|M^{\mathbf{f}}_{\varphi}(0,y))\|)(\gamma(\rho)+\gamma(\theta)),
\end{equation}
for every $\rho\in \mathcal{S}(\mathbb{R}^{n})$ and $\theta\in\mathcal{S}_{\mathbb{P}_{\lfloor \alpha \rfloor}}(\mathbb{R}^{n})$.
\end{theorem}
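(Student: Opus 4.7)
The overall strategy is to use the intertwining to concentrate the class estimate at $x=0$, apply a Calderón-type reproducing formula adapted to the LP-pair structure, and finally absorb the scale integrals using the regularly varying properties of $\mathbb{J}$. The first key identities follow directly from the intertwining hypothesis:
\[
M^{\mathbf{f}}_\varphi(x,y)=\pi(x)\, M^{\mathbf{f}}_\varphi(0,y),\qquad (\mathbf{f}*\check\varphi_0)(x)=\pi(x)\langle \mathbf{f},\varphi_0\rangle,
\]
so $M^{\mathbf{f}}_\varphi(x,y)\in E$ for a.e.\ $(x,y)\in\mathbb{R}^n\times(0,1]$ and $(\mathbf{f}*\check\varphi_0)(x)\in E$ for a.e.\ $x$, with pointwise bounds $\|M^{\mathbf{f}}_\varphi(x,y)\|\le C(1+|x|)^l\|M^{\mathbf{f}}_\varphi(0,y)\|$ and $\|(\mathbf{f}*\check\varphi_0)(x)\|\le C(1+|x|)^l\|\langle\mathbf{f},\varphi_0\rangle\|$. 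Choosing $\beta<\alpha$ sufficiently negative and applying \eqref{ineq rvf} converts $\mathbb{J}_y(\|M^{\mathbf{f}}_\varphi(0,y)\|)<\infty$ into $\int_0^1 y^{-\beta-1}\|M^{\mathbf{f}}_\varphi(0,y)\|\,dy<\infty$. All integrability conditions needed for Theorem \ref{wnwceth4} (applied with $\check\varphi_0$ in place of $\varphi_0$, whose Fourier transform satisfies the same non-vanishing condition on $\{|u|\le\tau\}$) are thus fulfilled, giving $\mathbf{f}\in\mathcal{S}'(\mathbb{R}^n,E)$.

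For the quantitative estimate \eqref{gBesov inequality 2}, I would construct a Calderón-type reproducing formula tailored to the LP-pair: test functions $\eta_0\in\mathcal{S}(\mathbb{R}^n)$ and $\eta\in\mathcal{S}_0(\mathbb{R}^n)$ such that
\[
\hat\varphi_0(u)\hat\eta_0(u)+\int_0^1\hat\varphi(yu)\hat\eta(yu)\,\frac{dy}{y}=1,\qquad u\in\mathbb{R}^n,
\]
equivalently, in the space domain,
\[
\rho=\rho*\eta_0*\varphi_0+\int_0^1 \rho*\eta_y*\varphi_y\,\frac{dy}{y}\qquad(\rho\in\mathcal{S}(\mathbb{R}^n)).
\]
The construction follows the pattern of the proof of Proposition \ref{wnwcep1}: start from a reconstruction wavelet $\eta$ for $\varphi$ with $\operatorname*{supp}\hat\eta\subset \{|u|\le\tau+\varepsilon\}\setminus\{0\}$ (hence $\eta\in\mathcal{S}_0$), split the Calderón integral at $r=1$, and define $\hat\eta_0:=F/\hat\varphi_0$, where $F(u)=\int_1^\infty\hat\varphi(ru)\hat\eta(ru)\,dr/r$ has compact support in $\{|u|\le\tau\}$, on which $\hat\varphi_0$ does not vanish.

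Pairing the reproducing formula with $\mathbf{f}$ and using the intertwining conversion $\langle\mathbf{f},g*\varphi_0\rangle=\int g(\xi)\pi(\xi)\langle\mathbf{f},\varphi_0\rangle\,d\xi$ (and analogously for $\varphi_y$), one obtains, on taking $E$-norms and using $\|\pi(\xi)\|_{L_b(E)}\le C(1+|\xi|)^l$,
\[
\|\langle\mathbf{f},\rho\rangle\|\le \gamma_0(\rho)\,\|\langle\mathbf{f},\varphi_0\rangle\|+C\int_0^1 H(y;\rho)\,\|M^{\mathbf{f}}_\varphi(0,y)\|\,\frac{dy}{y},
\]
with $\gamma_0(\rho)=C\int|(\rho*\eta_0)(\xi)|(1+|\xi|)^l\,d\xi$ and $H(y;\rho)=\int|(\rho*\eta_y)(\xi)|(1+|\xi|)^l\,d\xi$. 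Since $\eta\in\mathcal{S}_0$ has all vanishing moments, Taylor expansion gives $H(y;\rho)\le C_N\gamma_1(\rho)\,y^N$ for every $N\in\mathbb{N}$, and \eqref{ineq rvf} with $\beta=-N$ and $N>-\alpha$ bounds the remaining $y$-integral by $C_N\gamma_1(\rho)\,\mathbb{J}_y(\|M^{\mathbf{f}}_\varphi(0,y)\|)$. Replacing $\rho$ by $\check\theta_y$ in the reproducing formula gives the corresponding representation of $M^{\mathbf{f}}_\theta(0,y)$; using now the vanishing moments of $\theta$ up to order $\lfloor\alpha\rfloor$ and those of $\eta$, the kernel $B(y,s)=\int|(\check\theta_y*\eta_s)(\xi)|(1+|\xi|)^l\,d\xi$ admits a bound of the form $C\gamma_2(\theta)\min\{(y/s)^{\lfloor\alpha\rfloor+1},(s/y)^N\}$. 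Applying $\mathbb{J}_y$, property (I) (Minkowski) and property (V) (dilation with exponent $\alpha\pm\varepsilon$) then reduces the resulting double integral to $\gamma_2(\theta)\,\mathbb{J}_y(\|M^{\mathbf{f}}_\varphi(0,y)\|)$.

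The principal technical obstacle is this last reduction of the $s$-convolution into the $\mathbb{J}$-norm: after the substitution $s=yt$ it becomes an integral over $t$ of $t^{-(\lfloor\alpha\rfloor+1)}\mathbb{J}_y(\|M^{\mathbf{f}}_\varphi(yt)\|)$ on $[1,\infty)$ and $t^{N}\mathbb{J}_y(\|M^{\mathbf{f}}_\varphi(yt)\|)$ on $(0,1]$, and each piece must be summable after the cost $t^{\alpha\pm\varepsilon}$ incurred by (V). Convergence requires the inequalities $\lfloor\alpha\rfloor+1>\alpha+\varepsilon$ (from $\theta$'s moments) and $N+\alpha-\varepsilon>0$ (from $\eta$'s moments); the first is precisely the threshold fixed by the LP-pair condition $\varphi\in\mathcal{S}_{\mathbb{P}_{\lfloor\alpha\rfloor}}$, and the second is free since $\eta\in\mathcal{S}_0$. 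The interplay between these two-sided moment decays and the dilation bound of $\mathbb{J}$ is the crux of the estimate.
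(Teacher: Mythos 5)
Your argument reaches the same conclusion but by a genuinely different route from the paper's. The reduction to Theorem \ref{wnwceth4} in the first part is exactly the paper's. The difference is in how \eqref{gBesov inequality 2} is obtained. The paper rescales $\mathbf{f}$ into a distribution with values in the Banach space $L^{\mathbb{J}^{1,\beta}}((0,1],E)$, applies its class-estimate machinery to produce polynomial correction coefficients $\mathbf{c}_{m}$, eliminates those with $|m|>\alpha$ via an iterated scaling identity, deduces \eqref{besov eq 6}, and then produces the seminorm $\gamma$ \emph{non-constructively} by an open mapping / Banach--Steinhaus argument on two auxiliary Banach spaces of intertwining distributions. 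You instead build an explicit Calder\'on-type reproducing formula $\hat{\rho}\hat{\eta}_{0}\hat{\varphi}_{0}+\int_{0}^{1}\hat{\rho}(\cdot)\hat{\eta}(y\cdot)\hat{\varphi}(y\cdot)\,\frac{\mathrm{d}y}{y}=\hat{\rho}$ adapted to the LP-pair, pair it with $\mathbf{f}$, convert translations into $E$-norm bounds via the intertwining and property (c), and then absorb the scale integrals using the moment-cancellation kernel estimate $B(y,s)\lesssim\min\{(y/s)^{\lfloor\alpha\rfloor+1},(s/y)^{N}\}$ together with properties (I) and (V) of $\mathbb{J}$; in particular the threshold $\lfloor\alpha\rfloor+1>\alpha+\varepsilon$ for convergence explains why the LP-pair requires $\varphi\in\mathcal{S}_{\mathbb{P}_{\lfloor\alpha\rfloor}}$. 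This is a quantitative Littlewood--Paley argument that yields a concrete $\gamma$, while the paper's proof stays within its abstract Tauberian framework but gives no explicit seminorm. One small imprecision in your construction: you should define $F(u):=1-\int_{0}^{1}\hat{\varphi}(ru)\hat{\eta}(ru)\,\frac{\mathrm{d}r}{r}$ (the smooth extension across $u=0$, with $F(0)=1$) rather than as $\int_{1}^{\infty}\hat{\varphi}(ru)\hat{\eta}(ru)\,\frac{\mathrm{d}r}{r}$, since the latter vanishes at $u=0$ (because $\hat{\eta}(0)=0$) and is therefore discontinuous there; moreover $\operatorname*{supp}F\subset\overline{B(0,\tau+\varepsilon)}$ rather than $\overline{B(0,\tau)}$, but $\hat{\varphi}_{0}$ remains nonvanishing on this slightly larger ball by continuity and compactness, so $\hat{\eta}_{0}=F/\hat{\varphi}_{0}$ is still a well-defined element of $\mathcal{S}(\mathbb{R}^{n})$ with $\int\eta_{0}\neq0$, and the rest of your argument goes through.
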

\begin{proof} Note that

$$
M^{\mathbf{f}}_{\varphi}(x,y)=\pi (x) (M^{\mathbf{f}}_{\varphi}(0,y)),
$$
whence one checks that the condition (i) of Theorem \ref{wnwceth4} is satisfied, as implied by the assumption (d) on $\pi$. Using \eqref{gBesov inequality 1}, the property (c) of $\pi$, and \eqref{ineq rvf}, one obtains that the hypothesis (ii) of Theorem \ref{wnwceth4} holds. Furthermore, the function $\mathbf{f}\ast \check{\varphi}_{0}\in C(\mathbb{R}^{n},E)$ and satisfies condition (iii) of Theorem \ref{wnwceth4}. Consequently, $\mathbf{f}\in\mathcal{S}'(\mathbb{R}^{n},E)$. 

In order to find $\gamma$ such that \eqref{gBesov inequality 2} holds, we first consider 
$$
L^{\mathbb{J}}((0,1],E)=\left\{ \mathbf{g}:(0,1]\to E:\: \|\mathbf{g}(\lambda)\|_{E} \mbox{ is measurable and } \mathbb{J}_{\lambda}(\|\mathbf{g}(\lambda)\|_{E})<\infty \right\},
$$
and analogously for other regularly varying functionals; it is a Banach space  (cf. \cite[Remark 2.1, p. 712]{drozhzhinov-z2}). Obviously, there is $\beta\in\mathbb{R}$ such that 
$$
\int_{0}^{1} \|\langle \mathbf{f}(\lambda t), \phi(t) \rangle\|\frac{\mathrm{d}\lambda}{\lambda^{\beta+1}}<\infty \qquad \mbox{for each }\phi\in\mathcal{S}(\mathbb{R}^{n}).
$$
This means that the vector-valued distribution $\mathbf{F}$ given by
$$
\langle \mathbf{F}(t), \phi (t)\rangle (\lambda) = \langle \mathbf{f}(\lambda t), \phi(t) \rangle
$$
takes values in the Banach space $L^{\mathbb{J}^{1,\beta}}((0,1],E)$, with $\mathbb{J}^{1,\beta}$ as defined in Example \ref{ex regularly varying functional}. We may assume that $\beta<\alpha$ so that $L^{\mathbb{J}}((0,1],E)$ is continuously included into $L^{\mathbb{J}^{1,\beta}}((0,1],E)$, as follows from \eqref{ineq rvf}. Since
\begin{align*}
\| M^{\mathbf{F}}_{\varphi}(x,y)\|_{L^{\mathbb{J}}((0,1],E)}&=\mathbb{J}_{\lambda}(\|M_{\varphi}^{\mathbf{f}}(\lambda x, \lambda y)\|)
\\
&
\leq (1+|x|)^{l}\mathbb{J}_{\lambda}(\|M_{\varphi}^{\mathbf{f}}(0,\lambda y)\|) 
\\
&
\leq C_{\varepsilon}y^{\alpha}\left(y^{\varepsilon}+\frac{1}{y^{\varepsilon}}\right) (1+|x|)^{l}\mathbb{J}_{\lambda}(\|M_{\varphi}^{\mathbf{f}}(0,\lambda)\|), \qquad (x,y)\in \mathbb{H}^{n},
\end{align*}
Corollary applies to conclude the existence of functions $\mathbf{c}_{m}\in L^{\mathbb{J}^{1,\beta}}((0,1],E),$ $|m|\leq N$,
such that
\begin{equation}
\label{besov eq 3}
\mathbb{J}_{\lambda}\Big(\Big\|\langle \mathbf{f}(\lambda t), \phi(t) \rangle - \sum_{|m|\leq N}\mathbf{c}_{m}(\lambda) \int_{\mathbb{R}^{n}} t^{m}\phi(t)\mathrm{d}t\Big\|  \Big)<\infty, \qquad \phi\in\mathcal{S}(\mathbb{R}^{n}).
\end{equation}
We may also assume that each $\mathbf{c}_{m}$ is bounded on $[1/2,1]$. Fix $|m|\leq N$ and pick $\phi\in\mathcal{S}(\mathbb{R}^{n})$ with $\int_{\mathbb{R}^{n}}t^{j}\phi(t)\mathrm{d}t=\delta_{j,m}$. Then, for each fixed $a>0$
\begin{align*}
\mathbf{c}_{m}(a\lambda)- a^{|m|}\mathbf{c}_{m}(\lambda)
&= \Big(\langle\mathbf{f}(\lambda t), a^{-n}\phi(t/a) \rangle - \sum_{|m|\leq N}\mathbf{c}_{m}(\lambda) a^{-n}\int_{\mathbb{R}^{n}} t^{m}\phi(t/a)\mathrm{d}t\Big)
\\
&
\quad - \Big(\langle\mathbf{f}(a\lambda t),  \phi(t) \rangle
 - \sum_{|m|\leq N}\mathbf{c}_{m}(a\lambda) \int_{\mathbb{R}^{n}} t^{m}\phi(t)\mathrm{d}t\Big) \in L^{\mathbb{J}}((0,1],E).
\end{align*}
We use these relations with a fixed multi-index $\alpha \leq |m|\leq N$ and $a=1/2$, and write
\begin{equation}
\label{besov eq 4}
\mathbf{c}_{m}(\lambda/2)- 2^{-|m|}\mathbf{c}_{m}(\lambda)=\mathbf{b}(\lambda)\in L^{\mathbb{J}}((0,1],E).
\end{equation}
Iterating \eqref{besov eq 4} $\nu$ times, one deduces that for each $\nu\in \mathbb{Z}_{+}$
$$
\mathbf{c}_{m}(\lambda)= 2^{|m|}\left(2^{-\nu |m|} \mathbf{c}_{m}(2^{\nu}\lambda)+ \sum_{j=1}^{\nu} 2^{-|m|j} \mathbf{b}(2^{j}\lambda)\right), \qquad 0<\lambda\leq 2^{-\nu}.
$$
Denote as $\chi_{A}$ the characteristic function of a set $A$ and pick $0<\varepsilon< |m|-\alpha$. Then,
\begin{align*}
\mathbb{J}(\|\mathbf{c}_{m}\|)&\leq \mathbb{J}(\chi_{(1/2,1]}\|\mathbf{c}_{m}\|)
+ 2^{|m|}\sum_{\nu=1}^{\infty}2^{-\nu|m|}\mathbb{J}_{\lambda}(\chi_{(1/2,1]}(2^{\nu}\lambda)\|\mathbf{c}_{m}(2^{\nu}\lambda)\|)
\\
&
\qquad + 2^{|m|}\mathbb{J}_{\lambda}\left(\Big\|\sum_{j=1}^{\infty}2^{-j|m|}\mathbf{b}(2^{j}\lambda)\chi_{(0,2^{-j}]}(\lambda)\Big\|\right)
\\
&
\leq C_{\varepsilon} \left(\mathbb{J}(\chi_{(1/2,1]}\|\mathbf{c}_{m}\|) +\mathbb{J}(\chi_{(0,1/2]}\|\mathbf{b}\|)\right )\sum_{j=1}^{\infty} 2^{-j(m-\alpha-\varepsilon)}<\infty,
\end{align*}
so that
\begin{equation}
\label{besov eq 5}
\mathbf{c}_{m}\in L^{\mathbb{J}}((0,1],E) \qquad\mbox{for } \alpha< |m|\leq N.
\end{equation}
Combining \eqref{besov eq 3} with \eqref{besov eq 5}, we now conclude that 
\begin{equation}
\label{besov eq 6}
\mathbb{J}_{y}\left(\|M_{\phi}^{\mathbf{f}}(0,y)\|\right)<\infty, \qquad \mbox{for every } \phi\in\mathcal{S}_{\mathbb{P}_{\lfloor \alpha \rfloor}}(\mathbb{R}^{n}).
\end{equation}

Our final step is to use (\ref{besov eq 6}) to infer the existence of the sought seminorm $\gamma$. We define two normed spaces of $E$-valued distributions intertwining $\pi$ and $T$. The first of them is the space $Y$  consisting of all those $\mathbf{g}$ such that 
\begin{equation}
\label{besov eq 7}
\|\mathbf{g}\|_{Y}= \|\langle \mathbf{g}, \varphi_0 \rangle\|+ \mathbb{J}_{y} (M_{\varphi}^{\mathbf{g}}(0,y))<\infty.
\end{equation}
For the second space, we consider a fixed but arbitrary bounded set $\mathfrak{B}\subset \mathcal{S}(\mathbb{R}^{n})\times \mathcal{S}_{\mathbb{P}_{\lfloor \alpha \rfloor}}(\mathbb{R}^{n})$ with the only requirement that $\mathfrak{B}$ contains the $LP$-pair $(\varphi_0,\varphi)$ and define $\tilde{Y}$ as the space of those $\mathbf{g}$ such that 
\begin{equation}
\label{besov eq 8}
\|\mathbf{g}\|_{\tilde{Y}}= \sup_{(\rho,\theta)\in \mathfrak{B}}(\|\langle \mathbf{g}, \rho \rangle\|+ \mathbb{J}_{y} (M_{\theta}^{\mathbf{g}}(0,y)))<\infty.
\end{equation}
A standard argument (see, e.g., \cite[Assertion 6.2, p. 740]{drozhzhinov-z2} or \cite[Prop. 5.4, p. 15]{P-R-V}) shows that both $Y$ and $\tilde{Y}$ are Banach spaces. The relation (\ref{besov eq 6}) applied to an arbitrary element of $Y$ and the Banach-Steinhaus theorem implies that $Y=\tilde{Y}$ as vector spaces. Since the identity mapping $(Y,\|\:\cdot\|_{\tilde{Y}})\to (Y,\|\:\cdot\|_{Y})$ is obviously continuous, the open mapping theorem gives that the norms \eqref{besov eq 7} and \eqref{besov eq 8} are equivalent. Thus, applying again the Banach-Steinhaus theorem, we get that the bilinear mapping
$$
Y\times (\mathcal{S}(\mathbb{R}^{n})\times \mathcal{S}_{\mathbb{P}_{\lfloor \alpha \rfloor}}(\mathbb{R}^{n})) \ni (\mathbf{g}, (\rho,\theta))\mapsto (\langle \mathbf{g},\rho \rangle, M_{\theta}^{\mathbf{g}}(0, \: \cdot \:) )\in  E\times L^{\mathbb{J}}((0,1],E).
$$
is continuous. This yields the existence of $\gamma$ with the desired properties. The proof is complete.

\end{proof}
\begin{example}[\emph{Besov spaces}]
\label{ex Besov} Let $(\varphi_0,\varphi)$ be an LP-pair of order $s$, let $p,q\in[1,\infty]$, and let $c\in L^{\infty}_{loc}(0,1]$ be a regularly varying function (at 0) of index $s$. We define the Besov space $B^{c}_{p,q}(\mathbb{R}^{n})$ as the Banach space of all $f\in\mathcal{S}'(\mathbb{R}^{n})$ such that\footnote{When $p=\infty$, we apply our considerations with $E=UC(\mathbb{R}^{n})$, the space of uniformly continuous functions; the $L^{\infty}$ assumption itself implies \cite[Thm. 3]{D-P-V2015TIB} $f\ast \varphi_0\in UC(\mathbb{R}^{n})$ and $M^{f}_{\varphi}(\:\cdot\:, y) \in UC(\mathbb{R}^{n})$ for each $y>0$.} $f\ast \varphi_0\in L^{p}(\mathbb{R}^{n})$ and $M^{f}_{\varphi}(\:\cdot\:, y) \in L^{p}(\mathbb{R}^{n})$ for each $(0,1]$ and 
$$
\|f\|_{B^{c}_{p,q}(\mathbb{R}^{n})}= \|f\ast \varphi_0\|_{L^{p}(\mathbb{R}^{n})}| + \mathbb{J}^{q,c}_{y}(\|M^{f}_{\varphi}(x, y) \|_{L^{p}(\mathbb{R}_{x}^{n})}) < \infty.
$$
When $c(y)=y^{s}$, we recover the classical Besov space $B^{s}_{p,q}(\mathbb{R}^{n})=B^{c}_{p,q}(\mathbb{R}^{n})$. 

If we consider $E=L^{p}(\mathbb{R}^{n})$ and the vector-valued distribution $\mathbf{f}$ given by $\langle \mathbf{f}, \rho \rangle= f\ast \check{\rho}$, which takes values in $X=L^{p}(\mathbb{R}^{n}, (1+|x|)^{N}\mathrm{d}x)$ for some sufficiently large $N$, Theorem \ref{th1 LP} yields that the definition of $B^{c}_{p,q}(\mathbb{R}^{n})$ is independent of $(\varphi_{0},\varphi)$ and that different choices of LP-pairs of order $s$ lead to equivalent norms. Moreover, given arbitrary $\rho\in \mathcal{S}(\mathbb{R}^{n})$ and $\theta\in  \mathcal{S}_{\mathbb{P}_{\lfloor \alpha \rfloor}}(\mathbb{R}^{n})$, there are contants $C_1$ and $C_2$ such that 
\begin{equation}
\label{besov eq1}
\|f\ast \rho \|_{L^{p}(\mathbb{R}^{n})}\leq C_1\|f\|_{B^{c}_{p,q}(\mathbb{R}^{n})}
\end{equation}
and 
\begin{equation}
\label{besov eq2}
\mathbb{J}^{q,c}_{y}(\|M^{f}_{\theta}(x, y) \|_{L^{p}(\mathbb{R}_{x}^{n})})\leq C_2\|f\|_{B^{c}_{p,q}(\mathbb{R}^{n})}
\end{equation} 
for all $f\in B^{c}_{p,q}(\mathbb{R}^{n})$. The constants $C_1$ and $C_2$ in these inequalities can be chosen to be the same if we let $\rho$ and $\theta$ vary on bounded subsets of $\mathcal{S}(\mathbb{R}^{n})$. If $s<0$, we can take here any $\theta\in \mathcal{S}(\mathbb{R}^{n})$ without any assumption on its moments.
\end{example}


\begin{thebibliography}{99}

\bibitem{b-g-t}N.~H.~Bingham, C.~M.~Goldie, J.~L.~Teugels,
\emph{Regular variation,} Encyclopedia of Mathematics
and its Applications, 27., Cambridge University Press, Cambridge, 1989.

\bibitem{C} R.~Chill, \emph{Tauberian theorems for vector-valued Fourier and Laplace transforms,}
Studia Math. \textbf{128} (1998), 55--69.

\bibitem{D-V2016} G.~Debruyne, J.~Vindas, \emph{Generalization of the Wiener-Ikehara theorem,} Illinois J. Math. \textbf{60} (2016), 613--624.

\bibitem{D-V_OPT} G.~Debruyne, J.~Vindas, \emph{Optimal Tauberian constant in Ingham's theorem for Laplace transforms,} Israel J. Math. \textbf{228} (2018), 557--586.

\bibitem{D-V_CT} G.~Debruyne, J.~Vindas, \emph{Complex Tauberian theorems for Laplace transforms with local pseudofunction boundary behavior,} J. Anal. Math., to appear.

\bibitem{D-P-V2015TIB}  P.~Dimovski, S.~Pilipovi\'{c}, J.~Vindas, \emph{New distribution spaces associated to translation-invariant Banach spaces,} Monatsh. Math. \textbf{177} (2015), 495--515. 


\bibitem{Drozhzhinov2016} Yu.~N.~Drozhzhinov, \emph{Multidimensional Tauberian theorems for generalized functions,} Russian Math. Surveys \textbf{71} (2016), 1081--1134.


\bibitem {drozhzhinov-z2} Yu.~N.~Drozhzhinov, B.~I.~Zav'yalov,
\textit{Tauberian theorems for generalized functions with values
in Banach spaces}, Izv. Math. \textbf{66} (2002), 701--769.

\bibitem {drozhzhinov-z3} Yu.~N.~Drozhzhinov, B.~I.~Zav'yalov,
\textit{Multidimensional Tauberian theorems for Banach-space valued generalized functions}, Sb. Math. \textbf{194} (2003), 1599--1646.


\bibitem {drozhzhinov-z5} Yu.~N.~Drozhzhinov, B.~I.~Zav'yalov, \textit{Applications of Tauberian theorems in some problems in mathematical physics}, Teoret. Mat. Fiz. \textbf{157} (2008), 373--390.



\bibitem{holschneider} M.~Holschneider, \textit{Wavelets. An analysis tool,}
The Clarendon Press, Oxford University Press, New York, 1995.

\bibitem {korevaarbook} J.~Korevaar, \textit{Tauberian theory. A century of developments,} Grundlehren der Mathematischen Wissenschaften, 329, Springer-Verlag, Berlin, 2004.

\bibitem{korevaar2005} J.~Korevaar, \emph{Distributional Wiener-Ikehara theorem and twin primes,} Indag. Math. (N.S.) \textbf{16} (2005), 37--49.


\bibitem {P-R-V} S.~Pilipovi\'{c}, D.~Raki\'{c}, J.~Vindas, \emph{New classes of weighted H\"{o}lder-Zygmund spaces and the wavelet transform,} J. Funct. Spaces Appl., vol. 2012, Article ID 815475, 18 pp., 2012.

\bibitem {P-S-V} S.~Pilipovi\'{c}, B.~Stankovi\'{c}, J.~Vindas, \emph{Asymptotic behavior of generalized functions,} Series on Analysis, Applications and Computations, 5, World Scientific Publishing Co., Hackensack, NJ, 2012.

\bibitem{Pil-Vin2014} S.~Pilipovi\'{c}, J.~Vindas, \emph{Multidimensional Tauberian theorems for vector-valued distributions,} Publ. Inst. Math. (Beograd) \textbf{95} (2014), 1--28. 

\bibitem{rudin} W.~Rudin, \textit{Functional analysis,} second edition, International Series in Pure and Applied Mathematics, McGraw-Hill, New York, 1991.



\bibitem{schwartzv} L. Schwartz, \textit{Th\'{e}orie
des distributions \`{a} valeurs vectorielles. I,} Ann. Inst. Fourier
Grenoble \textbf{7} (1957), 1--141.


\bibitem{silva} J. Sebasti\~{a}o e Silva, \textit{Sur la d\'{e}finition et la structure des distributions vectorielles,} Portugal. Math. \textbf{19} (1960), 1--80.



\bibitem {treves} F.~Tr\`{e}ves, \textit{Topological vector spaces,
distributions and kernels}, Academic Press, New York, 1967.





\bibitem{vindas-pilipovic-rakic} J.~Vindas, S.~Pilipovi\'{c}, D.~Raki\'{c}, \textit{Tauberian theorems for the wavelet transform,} J. Fourier Anal. Appl. \textbf{17} (2011), 65--95.
 
\bibitem{vladimirovbook} V. S. Vladimirov, \textit{Methods of the theory of generalized functions,} Analytical Methods and Special Functions, 6., Taylor \& Francis, London, 2002.

\bibitem {vladimirov-d-z1} V.~S.~Vladimirov, Yu.~N.~Drozhzhinov, B.~I.~Zav'yalov,
\textit{Tauberian theorems for generalized functions,} Kluwer Academic Publishers Group, Dordrecht, 1988.






\bibitem{zavialov88} B.~I.~Zav'yalov, \textit{Asymptotic properties of functions that are holomorphic in tubular cones,} Math. USSR-Sb. \textbf{64} (1989), 97--113.

\end{thebibliography}
\end{document}